\providecommand{\U}[1]{\protect\rule{.1in}{.1in}}
\providecommand{\U}[1]{\protect\rule{.1in}{.1in}}
\providecommand{\U}[1]{\protect\rule{.1in}{.1in}}
\providecommand{\U}[1]{\protect\rule{.1in}{.1in}}
\providecommand{\U}[1]{\protect\rule{.1in}{.1in}}
\newtheorem{theorem}{Theorem}[section]
\newtheorem{conjecture}[theorem]{Conjecture}
\newtheorem{corollary}[theorem]{Corollary}
\newtheorem{definition}[theorem]{Definition}
\newtheorem{lemma}[theorem]{Lemma}
\newtheorem{proposition}[theorem]{Proposition}
\newtheorem{remark}[theorem]{Remark}
\DeclareMathOperator{\Mult}{Mult}
\DeclareMathOperator{\Id}{Id}
\DeclareMathOperator{\Conv}{Conv}
\begin{document}
\title[Central measures on multiplicative graphs]{Central measures on multiplicative graphs, representations of Lie algebras and
weight polytopes}
\author{C\'{e}dric Lecouvey and Pierre Tarrago}
\date{December 2016}
\maketitle

\begin{abstract}
To each finite-dimensional representation of a simple Lie algebra is
associated a multiplicative graph in the sense of Kerov and Vershik defined
from the decomposition of its tensor powers into irreducible components. It
was shown in \cite{LLP1} and \cite{LLP2} that the conditioning of natural
random Littelmann paths to stay in their corresponding Weyl chamber is
controlled by central measures on this type of graphs. Using the K-theory of associated $C^{*}$-algebras, Handelman \cite{Han1} established a homeomorphism between the set of central measures on these multiplicative graphs and the weight polytope of the
underlying representation. In the present paper, we make explicit this homeomorphism independently of Handelman's results by using Littelmann's path model. As a by-product we also get an explicit parametrization of the
weight polytope in terms of drifts of random Littelmann paths. This explicit parametrization yields a complete description of harmonic and $c$-harmonic functions for this Littelmann paths model.

\end{abstract}

\section{Introduction}

Consider a simple Lie algebra $\mathfrak{g}$ of rank $d$ over $\mathbb{C}$ and
its root system in $\mathbb{R}^{d}$. Let $P$ be the corresponding weight
lattice and fix $\Delta$ a dominant Weyl chamber. Then $P_{+}=P\cap\Delta$ is
the cone of dominant weights of $\mathfrak{g}$. Denote by $S=\{\alpha
_{1},\ldots,\alpha_{d}\}$ the underlying set of simple roots.\ To each dominant
weight $\delta\in P_{+}$ corresponds a finite-dimensional representation
$V(\delta)$ of $\mathfrak{g}$ of highest weight $\delta$. In \cite{Lit1}
Littelmann associated with $V(\delta)$ a set $B(\delta)$ of paths in
$\mathbb{R}^{d}$ with length $1$ starting at $0$ with ends the set
$\Pi_{\delta}$ of weights of $V(\delta)$. Random Littelmann paths can then be
defined first by endowing $B(\delta)$ with a suitable probability
distribution, next by considering random concatenations of paths in
$B(\delta)$. In \cite{LLP1} and \cite{LLP2} distributions on the set
$B(\delta)$ are defined from morphisms from $P$ to $\mathbb{R}_{>0}$. This is
equivalent to associate to each simple root $\alpha_{i}$ a real $t_{i}$ in
$]0,+\infty\lbrack$. It is then shown that these random paths and their
conditioning to stay in the Weyl chamber $\Delta$ are controlled by the
representation theory of $\mathfrak{g}$. In fact, one so obtains particular
central distributions on the set $\Gamma_{n}(\mathbb{R}^{d})$ of paths of
any length $n\geq1$ (obtained by concatenating $n$ paths in $B(\delta)$).\ By
central distributions we here mean that the probability of a finite path only
depends on its length and its end. Equivalently, we get a central measure on
the set of infinite concatenations $\Gamma(\mathbb{R}^{d})$ of paths in
$B(\delta)$ (see Section \ref{Sec_generalities}). 

Write $\mathcal{H}_{\infty}(\mathbb{R}^{d})$ for the set of central measures
on $\Gamma(\mathbb{R}^{d})$ and $\mathcal{H}_{\infty}(\Delta)$ for the subset
of $\mathcal{H}_{\infty}(\mathbb{R}^{d})$ of central measures on
$\Gamma(\Delta)$, the set of infinite trajectories remaining in $\Delta$. By
Choquet Theorem both sets $\mathcal{H}_{\infty}(\mathbb{R}^{d})$ and
$\mathcal{H}_{\infty}(\Delta)$ are simplices so they are essentially
determined by their minimal boundaries $\partial\mathcal{H}_{\infty}%
(\mathbb{R}^{d})$ and $\partial\mathcal{H}_{\infty}(\Delta)$.\ Write
$K(\delta)$ for the convex hull of $\Pi_{\delta}$ and set $K(\delta
)^{+}=\Delta\cap K(\delta)$. For walks in the Weyl chambers, the characterization of the sets $\partial\mathcal{H}_{\infty}%
(\mathbb{R}^{d})$ and $\partial\mathcal{H}_{\infty}(\Delta)$ has been obtained by Handelman in \cite{Han1} and \cite{Han2} using 
a deepful work of Price \cite{Pr1,Pr2}, by proving that they are respectively homeomorphic to $K(\delta)$ and $K(\delta)^{+}$. Nevertheless, the relevant homeomorphisms are not here made explicit. 
Their existence is established by considering the central measures as traces on certain fixed point $C^{*}$-algebras, and then using analytic tools. 
In particular, a central element of the proof is the extension of traces on $C^{*}$-algebras using K-theory (a short explanation of these arguments is given in Section \ref{explanationProof}). 

The goal of this paper is essentially threefold: first we make explicit
both homeomorphisms by using the Weyl characters of $\mathfrak{g}$ (see Theorem
\ref{mainresult}), next we give a purely algebraic self containing alternative proof of Handelman's results and finally we connect them with 
more recent studies on conditioned random walks or Brownian motions, Pitman transform generalizations and asymptotic Young tableaux (see \cite{OC1}, \cite{BBO}, \cite{LLP1,LLP2,LLP3}, \cite{Chh}, \cite{De}, \cite{SN}).  As a corollary of these results, we describe the set of harmonic and $c$-harmonic functions corresponding to the aforementioned random walks. Finally, we get a law of large numbers for random walks following the central measures obtained. Our two last results seem quite disconnected from the initial algebraic setting in representation theory, and we conjecture that they still hold for a very broad class of random paths. Our approach extends that of Kerov and Vershik to which
it essentially reduces when $V(\delta)$ is the defining representation of
$\mathfrak{g}=\mathfrak{sl}_{n}$. Nevertheless, numerous difficulties arise
when considering the general case of dominant weights of any simple algebra
$\mathfrak{g}$, which explains the involved material used in the proof of Handelman. Our methods to determine $\partial\mathcal{H}_{\infty}%
(\mathbb{R}^{d})$ and $\partial\mathcal{H}_{\infty}(\Delta)$ are quite
similar. So we will now give its main steps only in the case of $\partial\mathcal{H}%
_{\infty}(\Delta)$.

We first need to show that the characterization of $\partial\mathcal{H}_{\infty}%
(\Delta)$ is equivalent to that of the extremal harmonic
functions on the growth graph $\mathcal{G}(\Delta)$
associated with $\Gamma(\Delta)$. This growth diagram is rooted, graded and
multiplicative: its vertices label the basis $\mathcal{B}=\{(s_{\lambda
},n)\mid V(\lambda)$ irreducible component of $V(\delta)^{\otimes n}$ and
$n\geq1\}$ of a commutative algebra $\hat{T}_{\delta}^{+}$ (here
$s_{\lambda}$ is the Weyl character of $V(\lambda)$). We then establish that
the extremal nonnegative harmonic functions on $\mathcal{G}(\Delta)$ are in bijection with the algebra morphisms from $\hat{T}_{\delta}^{+}$ to $\mathbb{R}$ that are nonnegative on $\mathcal{B}$. Next, we
prove that all these morphisms are obtained by associating to each simple root
$\alpha_{i},i=1,\ldots,n$ a real in $[0,1]$. The difficulty here comes from
the fact that two such associations can yield the same morphism. So to obtain
a genuine parametrization we need to restrict ourselves to a subset
$[0,1]_{\delta}^{d}$ (see (\ref{def[0,1]delta}) for a precise definition) of $[0,1]^{d}$ whose
combinatorial description is in terms of the $\delta$-admissible subsets of $S$
introduced in \cite{Vin}. Finally, in Proposition \ref{bijectionPolytop}, we
show that our set $[0,1]_{\delta}^{d}$ also parametrizes the simplex
$K(\delta)^{+}$ by considering, for each $d$-tuple in $[0,1]_{\delta}^{d}$, the
drift of the corresponding random Littelmann path appearing in the
construction of \cite{LLP1} and \cite{LLP2}.

\bigskip

The paper is organized as follows. In Section 2, we recall some background on
random chains, central measures and multiplicative graphs. We also give a
generalization of a Theorem by Kerov and Vershik relating extremal harmonic
functions on a multiplicative graph to nonnegative morphisms of the underlying
algebra. The main result is written down in Section 3 where we also introduce
the algebras $\hat{T}_{\delta}$ and $\hat{T}_{\delta}^{+}$; a sketch of Handelman's arguments is proposed at the end of Section 3.\ Section 4 gives the description of
$\partial\mathcal{H}_{\infty}(\mathbb{R}^{d})$. Here, we define our set
$[0,1]_{\delta}^{d}$ and relate it to the geometry of the polytope $K(\delta)$.
The description of $\partial\mathcal{H}_{\infty}(\Delta)$ is deduced from that
of $\partial\mathcal{H}_{\infty}(\mathbb{R}^{d})$ in Section 5. It is worth
noticing that we need here (as in the result of Kerov and Vershik) a classical
theorem relating polynomials with non positive roots to totally positive
sequences.\ Another important ingredient in the proof is the use of certain
plethyms of Schur and Weyl characters of $\mathfrak{g}$. Finally, Section 6
relates both descriptions of $\partial\mathcal{H}_{\infty}(\mathbb{R}^{d})$ and
$\partial\mathcal{H}_{\infty}(\Delta)$ to the drift of random Littelmann paths.
Notably it explains how the polytope $K(\delta)$ can be simply parametrized by
using the set $[0,1]_{\delta}^{d}$.

\section{General probabilistic framework}

\label{Sec_generalities}We present here a general probabilistic model of
random paths in a domain, which is well suited to study probabilistic aspects of
Littelmann paths and their asymptotics. We introduce first a discrete version of
paths in a vector space.

\subsection{Random paths on a lattice}

Let $d\geq0$ and let $\Lambda$ be a lattice of $\mathbb{R}^{d}$.

\begin{definition}
Let $n$ be a nonnegative integer. A path $\gamma$ on $\Lambda$ is a piecewise
linear function $\gamma:[0,n]\longrightarrow\mathbb{R}^{d}$ with $\gamma(0)=0$, $\gamma
(i)\in\Lambda$ for all $i\in\{0,...,n\}$, and $\gamma
(x)\in\Lambda$ for all $x$ for which $\gamma$ is not differentiable at $x$.
The path $\gamma$ is called infinitesimal if $t=1$ and $\gamma(0)=0$. 
The length of the path $\gamma$ is defined as the length of the interval on
which $\gamma$ is defined and denoted by $l(\gamma)$: the path is said finite
if its length is finite, and infinite otherwise.
\end{definition}

We denote by $\gamma.\tau$ the concatenation of two finite paths $\gamma$ and $\tau$. A path defines a sequence of vectors $(\gamma(0),\dots,\gamma(i),\dots)$ in
$\Lambda$. Let $k\in\mathbb{N}$. When $\gamma$ is a path of length
$n\geq k$, we denote by $\gamma_{\downarrow k}$ the path $\gamma_{|[0,k]}$.
Let $X$ be a denumerable set of infinitesimal paths and let $\Omega$ be a
domain of $\mathbb{R}^{d}$ such that $0\in\Omega$; from now on, the set $X$ is fixed and is not mentioned in the various notations. A path $\gamma$ is called $X$-valued if $\gamma$
is the concatenation of infinitesimal paths coming from $X$: equivalently,
$\left(  \gamma_{\vert[i,i+1]}-\gamma(i)\right)  \in X$ for all $i\geq1$. In the sequel, any path is always considered as $X$-valued. The
set of $X$-valued paths (resp. finite $X$-valued paths, resp. $X$-valued paths
of length $n$, with $n\in\mathbb{N}\cup\{\infty\}$) whose image is included in
$\Omega$ is denoted by $\Gamma(\Omega)$ (resp. by $\Gamma_{f}(\Omega)$, resp.
$\Gamma_{n}(\Omega)$). For $x,y\in\Lambda$, we denote by
$\Gamma_{\Omega}(x,y)$ the set of infinitesimal paths $\gamma\in X$ such that $\gamma(1)=y-x$ and $x+\gamma\subset \Omega$. Finally, we denote by $\Gamma_{\Omega}(y,n)$ the set of finite paths of length $n$ ending at $y$.

In order to consider random paths in $\Omega$, we
need to define a $\sigma$-algebra on $\Gamma(\Omega)$. Let $\tau$ be a finite
rooted path of length $n$, and let $\Gamma_{\Omega}(\tau)$ be the set
$\{\gamma\in\Gamma(\Omega)|l(\gamma)\geq n,\gamma_{\downarrow n}=\tau\}$. We
define the $\sigma$-algebra $\mathcal{A}$ as the coarsest $\sigma$-algebra
containing all the sets $\Gamma_{\Omega}(\tau)$ for $\tau\in\Gamma_{f}%
(\Omega)$. It is readily seen that $\Gamma_{f}(\Omega)\in\mathcal{A}$ and that
the restriction of $\mathcal{A}$ to $\Gamma_{f}(\Omega)$ is the discrete
$\sigma$-algebra. The set $M_{1}(\Gamma(\Omega))$ of probability measures on
$\Gamma(\Omega)$ is considered with the initial topology with respect to the
evaluation maps on the sets $\Gamma_{\Omega}(\tau),\tau\in\Gamma_{f}(\Omega)$.
By Tychonov's Theorem, $M_{1}(\Gamma(\Omega))$ is a compact set with respect
to this topology.

\subsection{Central random paths}

\begin{definition}
\label{defHarmRandompath} A random path $\omega$ in $\Gamma(\Omega)$ is called
central if there is a function $p:\Lambda\times\Lambda\times\mathbb{N}%
\longrightarrow\mathbb{R}^{+}$ such that
\[
\mathbb{P}(\omega\in\Gamma_{\Omega}(\gamma))=p(\gamma(0),\gamma(l(\gamma
)),l(\gamma)),
\]
for all $\gamma\in\Gamma_{f}(\Omega)$. A measure on $\Gamma(\Omega)$ is called
central if the corresponding random path is central.
\end{definition}

The set of central measures (resp. central measures supported on $\Gamma
_{f}(\Omega)$, resp. central measures supported on $\Gamma_{\infty}(\Omega)$)
is denoted by $\mathcal{H}(\Omega)$ (resp. $\mathcal{H}_{f}(\Omega),\mathcal{H}_{\infty
}(\Omega)$). The sets $\mathcal{H}(\Omega)$, $\mathcal{H}_{f}(\Omega)$ and $\mathcal{H}_{\infty
}(\Omega)$ are convex subsets of $M_{1}(\Gamma(\Omega))$. Conditioning elements of
$\mathcal{H}(\Omega)$ on $\Gamma_{f}(\Omega)$ and $\Gamma_{\infty}(\Omega)$ yields
that any central measure is a convex combination of central measures in
$\mathcal{H}_{f}(\Omega)$ and $\mathcal{H}_{\infty}(\Omega)$. Therefore, the description
of $\mathcal{H}(\Omega)$ is equivalent to the description of $\mathcal{H}_{f}(\Omega)$
and $\mathcal{H}_{\infty}(\Omega)$.

It is readily seen that there is an
alternative equivalent definition of central random paths: a random path $\omega$ is
central if and only if the law of $\omega_{\downarrow n}$ conditioned on the set
$\{\gamma\in\Gamma(\Omega)|l(\gamma)\geq n,\gamma(n)=y\}$ is the
uniform law on $\Gamma_{\Omega}(y,n)$. 
This equivalent definition gives a straightforward description of the set
$\mathcal{H}_{f}(\Omega)$. Namely, conditioning on the last point of the
random path yields that any central measure $P\in\mathcal{H}_{f}(\Omega)$ admits a
unique decomposition
\[
P=\sum_{\substack{y\in\Lambda\\n\geq1}}a_{y,n}P_{y,n},
\]
where $a_{y,n}\geq0$ and $P_{y,n}$ is the uniform distribution on
the set $\Gamma_{\Omega}(y,n)$ for $y\in\Lambda$ and $n\geq1$. On the other
hand, the description of the set $\mathcal{H}_{\infty}(\Omega)$ is much more
complicated. It is known (see the next
section) that $\mathcal{H}_{\infty}(\Omega)$ is a convex set and even a Choquet
simplex. Therefore, there exists a subset $\partial\mathcal{H}_{\infty
}(\Omega)\subset\mathcal{H}_{\infty}(\Omega)$, such that any central measure $P
_{0}$ in $\mathcal{H}_{\infty}(\Omega)$ admits a unique integral representation
\[
P_{0}=\int_{\partial\mathcal{H}_{\infty}(\Omega)}P d\mu(P),
\]
where $\mu$ is a probability measure on the set $\partial\mathcal{H}_{\infty
}(\Omega)$. The set $\partial \mathcal{H}_{\infty}(\Omega)$ is called the minimal boundary of $\Gamma(\Omega)$.

\subsection{The graph embedding and Martin theory}

Let $P\in\mathcal{H}_{\infty}(\Omega)$. Then, by Definition \ref{defHarmRandompath}
there exists a function $p:\Lambda\times\mathbb{N}\longrightarrow
\mathbb{R}^{+}$ such that
\[
P(\Gamma_{\Omega}(\gamma))=p(\gamma(l(\gamma
)),l(\gamma)),
\]
for all $\gamma\in\Gamma_{f}(\Omega)$. Let $\lambda\in\Lambda$, and suppose that
$\gamma$ is a finite path of $\Gamma_{f}(\Omega)$ starting at $x$ and ending
at $\lambda$ with length $n$. A path $\tau$ of length $n+1$ ending at $\mu
\in\Lambda$ satisfies $\tau_{\downarrow n}=\gamma$ if and only if
$\tau_{\downarrow n}=\gamma$ and $\tau_{[n,n+1]}$ is an infinitesimal path joining
$\lambda$ to $\mu$. Therefore, $\Gamma_{\Omega}(\gamma)$ can be decomposed as
\[
\Gamma_{\Omega}(\gamma)=\coprod_{\mu\in\Lambda}\coprod_{\tau\in\Gamma
{\lambda,\mu}(\Omega)}\Gamma(\gamma.\tau).
\]
Thus,
\[
P\left( \Gamma_{\Omega}(\gamma)\right)  =\sum_{\mu
\in\lambda}\sum_{\tau\in\Gamma_{\Omega}(\lambda,\mu)}P\left(\Gamma(\gamma.\tau)\right)  ,
\]
which translates into the relation
\begin{equation}
p(\lambda,n)=\sum_{\mu\in\Lambda}\#\Gamma_
{\Omega}(\lambda,\mu)p(\mu,n+1),
\label{recursiveRelation}%
\end{equation}
where in the latter equality and in the sequel of the paper the cardinality of a set $X$ is denoted by $\#X$.
The set $\mathcal{H}_{\infty}(\Omega)$ is in bijection with the set of nonnegative
solutions of \eqref{recursiveRelation} with value $1$ on $(0,0)$. This
equivalence leads to an alternative description of central random paths.

\begin{definition}
The growth graph of $\Gamma(\Omega)$ is the rooted graded graph $\mathcal{G}(\Omega)$
defined recursively as follows:

\begin{itemize}
\item The root is denoted by $(0,0)$.

\item For each element $\lambda$ of $\Lambda$ such that there exists an
infinitesimal path ending at
$\lambda$, we define a vertex $(\lambda,1)$ of rank $1$ and an edge between
$(0,0)$ and $(\lambda,1)$ with weight $e(x,\lambda)=\#\Gamma
_{0,\lambda}(\Omega)$.

\item Let $n\geq1$, and suppose that the graded graph is defined up to rank
$n$: the set $\mathcal{G}_{n}(\Omega)$ of vertices of rank $n$ can be written as
$\lbrace(\lambda,n)\rbrace_{\lambda\in\Lambda_{n}}$, where $\Lambda_{n}$ is a
subset of $\Lambda$. For each element $\mu$ of $\Lambda$ such
that there exists an infinitesimal path $\gamma$ with $\gamma(0)\in\Lambda
_{n}$ and $\gamma(1)=\mu$, we define a vertex $(\mu,n+1)$ of rank $n+1$. For
each $\lambda\in\Lambda_{n}$ there is an edge from $(\lambda,n)$ to
$(\mu,n+1)$ with weight $e(\lambda,\mu)=\#\Gamma_{\Omega}(\lambda,\mu)$.
\end{itemize}
\end{definition}

We write $\lambda\nearrow\mu$ when $\#\Gamma_{\Omega}(\lambda,\mu)\not =0$. It is
readily seen that the number of paths between the root and $(\lambda,n)$ is
canonically equal to $\#\Gamma_{\Omega}(\lambda,n)$, and the set $\mathcal{H}%
_{\infty}(\Omega)$ is isomorphic to the set of nonnegative functions $p:\coprod
_{n\geq0}\Lambda_{n}\longrightarrow\mathbb{R}^{+}$ with $p(0,0)=1$ and
$p(\lambda,n)=\sum_{\lambda\nearrow\mu}e(\gamma,\mu)p(\mu,n+1)$.

We conclude
this subsection by establishing some connections between central measures on
random paths and Markov chains on lattices. From the growth graph of $\Gamma(\Omega)$, it is clear
that any central measure $P\in\mathcal{H}_{\infty}(\Omega)$ yields a Markov
chain $Z=(Z(0),Z(1),\dots)$ on the lattice
$\Lambda\cap\Omega$ with initial state $0$ and with a family of Markov kernels $(Q_{n})_{n\geq1}$: the
kernel $Q_{n}$ can be explicitly given from the
associated function $p:\coprod\Lambda_{n}\longrightarrow\mathbb{R}^{+}$ as
\[
Q_{n}(\mu,\nu)=\mathbf{1}_{\mu\nearrow\nu,p(\mu,n-1)\not=0}\frac{e(\mu,\nu)p(\nu
,n)}{p(\mu,n-1)}.
\]
By the equality $p(\mu,n-1)=\sum_{\mu\nearrow\nu}e(\mu,\nu)p(\nu,n)$, $Q_{n}$
is a well-defined Markov kernel, and it is readily seen that this family of
Markov kernels generates the random walk $Z$. Note that this random walk is generally not homogeneous in times, since the kernel $Q_{n}$ depends on $n$ through $p$.

\subsection{Doob conditioning and central measure}\label{doobconditioning}
Let $(\omega_{t})_{t\geq 0}$ be the random path in $\Gamma(\mathbb{R}^{d})$ defined by the Markovian evolution 
$$\mathbb{P}(\omega_{\vert [i,i+1]}-\omega(i)=\gamma)=\frac{1}{Z}$$
where $Z$ is the cardinality of $X$. Under this perspective, the central measures defined on $\Gamma(\Omega)$ in the previous section are exactly the possible ways to condition the random path $\omega$ to stay in $\Omega$, while keeping the uniform law on each set $\Gamma_{\Omega}(x,n)$.

The standard procedure to achieve this is the Doob-conditioning with $c$-harmonic functions. Assume that $h$ is a $c$-harmonic function (with $c>0$) for the random path $\omega$ killed when exiting $\Omega$. Namely, we have
$$h(x)=\frac{1}{cZ}\sum_{\substack{\gamma\in X\\x+\gamma\subset\Omega}}h(x+\gamma(1)).$$
Then, the Doob conditioning $\omega^{h}$ of $\omega$ in $\Omega$ is given by the Markov kernel
$$\mathbb{P}(\omega^{h}_{\vert [i,i+1]}-\omega^{h}(i)=\gamma\vert \omega^{h}(i)=x )=\mathbf{1}_{x+\gamma\subset \Omega}\frac{1}{cZ}\frac{h(x+\gamma(1))}{h(x)},$$
and we have $\mathbb{P}(\omega^{h}\in\Gamma_{\Omega}(\gamma))=\frac{1}{(cZ)^{n}}h(x)$ for all $\gamma\in\Gamma_{\Omega}(x,n)$. The resulting random path $\omega^{h}$ is thus central, and the associated function $p_{\omega^{h}}$ is exactly $p_{\omega^{h}}(x,n)=\frac{1}{(cZ)^{n}}h(x)$.

Reciprocally, suppose that $\omega$ is a central random path with an associated function $p_{\omega}$ which satisfies $p_{\omega}(x,n)=\frac{1}{K^{n}}p(x)$ for some function $p:\Lambda\cap\Omega\longrightarrow \mathbb{R}^{+}$ and $K>0$. Then, \eqref{recursiveRelation} yields 
$$\frac{1}{K^{n}}p(x)=\sum_{y\in\Lambda}\#\Gamma_{\Omega}(x,y)\frac{1}{K^{n+1}}p(y),$$
which is equivalent to the relation
$$p(x)=\sum_{\substack{\gamma\in X\\x+\gamma\subset \Omega}}\frac{1}{K}p(x+\gamma(1)).$$
Hence, $p$ is a $\frac{K}{Z}$-harmonic function. 

Thus, the set of $c$-harmonic functions is homeomorphic to the set of central random paths $\omega\in\mathcal{H}_{\infty}(\Omega)$ whose associated functions $p_{\omega}$ have the form $p_{\omega}(x,n)=\frac{p(x)}{(cZ)^{n}}$ with $p:\Lambda\cap\Omega\longrightarrow \mathbb{R}^{+}$. We denote by $\mathcal{H}_{c}(\Omega)$ the set of central measures coming from $c$-harmonic functions, and by $\partial \mathcal{H}_{c}(\Omega)$ the set of extreme points of $\mathcal{H}_{c}(\Omega)$. Up to our knowledge there is no general proof that $\partial \mathcal{H}_{c}(\Omega)=\partial\mathcal{H}_{\infty}(\Omega)\cap\mathcal{H}_{c}(\Omega)$; in our case of study, this equality is proven by explicitly describing both sets.

We remark that the random walk $Z_{\omega}$ associated with a central random path $\omega\in\mathcal{H}_{c}(\Omega)$ is homogeneous in time. A quick computation shows that $Z_{\omega}$ is homogeneous in time if and only if $\omega\in\mathcal{H}_{c}(\Omega)$ for some $c>0$.

\subsection{Central measures on multiplicative graphs}

A rooted graded graph $\mathcal{G}=\{\ast\}\sqcup\coprod_{n\geq1}%
\mathcal{G}_{n}$ with weights $(e(\lambda,\mu))_{\substack{\mu,\lambda
\in\mathcal{G}\\\lambda\nearrow\mu}}$ is called multiplicative if there is a commutative
algebra $A$ and an injective map $i:\mathcal{G}\longrightarrow A$ such that
$i(\lambda)i(\ast)=\sum_{\lambda\nearrow\mu}e(\lambda,\mu)i(\mu)$. We suppose
that the graph is connected, which means that for all $\mu\in\mathcal{G}$, the
number of paths between the root and $\mu$ is positive. The weight $w(\gamma)$
of a path $\gamma$ between the root and a vertex $\mu$ is the product of all
the weights of the edges of $\gamma$. Let $K$ be the positive cone spanned by
$i(\mathcal{G})$, and let $A_{\mathcal{G}}$ be the unital subalgebra of $A$
generated by $K$. The following result is an application of the Ring theorem
of Kerov and Vershik (see for example \cite[Section 8.4]{GO}) which
characterizes the extreme points of the set $\mathcal{H}(\mathcal{G})$ of
solutions to the following problem:
\begin{equation}
\left\{
\begin{matrix}
& p:\mathcal{G}\longrightarrow\mathbb{R}^{+}\\
& p(\ast)=1\\
& p(\lambda)=\sum_{\lambda\nearrow\mu}e(\gamma,\mu)p(\mu).
\end{matrix}
\right.  \label{recursiveRelationBis}%
\end{equation}
Denote by $\Mult^{+}(A_{\mathcal{G}})\subset A_{\mathcal{G}}^{\ast}$ the set of
multiplicative functions on $A_{\mathcal{G}}$ which are nonnegative on $K$
and equal to $1$ on $i(\ast)$. Note that $i:\mathcal{G}\longrightarrow
A_{\mathcal{G}}$ induces a map $i^{\ast}:A_{\mathcal{G}}^{\ast}\longrightarrow
F(\mathcal{G},\mathbb{R})$.

\begin{proposition}
\label{multiGraphExtreme} Suppose that $K.K\subset K$. Then, the map $i^{*}$
yields an homeomorphism between $\Mult^{+}(A_{\mathcal{G}})$ and the set of
extreme points of $\mathcal{H}(\mathcal{G})$.
\end{proposition}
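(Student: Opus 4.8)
The plan is to reduce the statement to the Ring Theorem of Vershik and Kerov, after disposing of the elementary inclusion and the topological bookkeeping. Recall that $\mathcal{H}(\mathcal{G})$, the set of solutions of \eqref{recursiveRelationBis}, is a Choquet simplex, and that by Martin boundary theory (the ergodic method) its extreme points lie among the pointwise limits, as $\mu\to\infty$ in $\mathcal{G}$, of the Martin kernels $M_{\mu}(\nu):=\#\{\text{paths }\nu\to\mu\}\,/\,\#\{\text{paths }\ast\to\mu\}$; see \cite[Section 8.4]{GO}. We want to show that $i^{\ast}(\Mult^{+}(A_{\mathcal{G}}))$ is exactly this set of extreme points.

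First I would check that $i^{\ast}$ maps $\Mult^{+}(A_{\mathcal{G}})$ into $\mathcal{H}(\mathcal{G})$: for $\phi\in\Mult^{+}(A_{\mathcal{G}})$ put $p_{\phi}:=i^{\ast}\phi=\phi\circ i$; then $p_{\phi}(\ast)=1$, applying $\phi$ to $i(\lambda)i(\ast)=\sum_{\lambda\nearrow\mu}e(\lambda,\mu)i(\mu)$ together with $\phi(i(\ast))=1$ yields $p_{\phi}(\lambda)=\sum_{\lambda\nearrow\mu}e(\lambda,\mu)p_{\phi}(\mu)$, and $p_{\phi}\ge 0$ since $i(\mathcal{G})\subset K$. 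Because $K.K\subset K$, the algebra $A_{\mathcal{G}}$ is linearly spanned by $i(\mathcal{G})$ together with the unit, so a multiplicative functional is determined by its restriction to $i(\mathcal{G})$; hence $i^{\ast}$ is injective on $\Mult^{+}(A_{\mathcal{G}})$. Via $\phi\mapsto(\phi(i(\mu)))_{\mu\in\mathcal{G}}$ one identifies $\Mult^{+}(A_{\mathcal{G}})$ with a closed subset (multiplicativity, positivity and normalization being closed conditions) of the compact cube $\prod_{\mu}[0,1]$, the bound $\phi(i(\mu))\le 1$ coming from iterating \eqref{recursiveRelationBis} together with the connectedness of $\mathcal{G}$; thus $\Mult^{+}(A_{\mathcal{G}})$ is compact, and since $i^{\ast}$ is continuous and $F(\mathcal{G},\mathbb{R})$ is Hausdorff, $i^{\ast}$ is a homeomorphism onto its image.

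For the inclusion $\partial\mathcal{H}(\mathcal{G})\subseteq i^{\ast}(\Mult^{+}(A_{\mathcal{G}}))$ I would convert path counts into structure constants: iterating the defining relation, the coefficient of $i(\mu)$ in $i(\nu)\,i(\ast)^{m}$ equals the number of length-$m$ paths from $(\nu,\cdot)$ to $(\mu,\cdot)$, so each $M_{\mu}$ is a ratio of such coefficients. Multiplying the expansions attached to two vertices $\nu_{1},\nu_{2}$, re-expanding the product $i(\nu_{1})i(\nu_{2})\in K$ over $i(\mathcal{G})$ (this is where $K.K\subset K$ enters), reading off the coefficient of $i(\mu)$, and letting $\mu\to\infty$ along a sequence realizing a given extreme $p$, one obtains the relation expressing that the linear extension $\phi$ of $p$ to $A_{\mathcal{G}}$ is multiplicative; this extension is well defined despite possible linear dependencies among the $i(\lambda)$, because $p$ is a limit of the genuine "coefficient of $i(\mu)$" functionals, which annihilate those dependencies. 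As $p\ge 0$ forces $\phi\ge 0$ on $K$, we get $p=i^{\ast}\phi$ with $\phi\in\Mult^{+}(A_{\mathcal{G}})$. The reverse inclusion is then formal: $\mathcal{H}(\mathcal{G})$ being a simplex, a non-extreme point of $i^{\ast}(\Mult^{+}(A_{\mathcal{G}}))$ would be a proper barycenter of points of $\partial\mathcal{H}(\mathcal{G})$, hence of characters by the inclusion just proved, and a non-trivial average of distinct characters fails multiplicativity (apply strict Jensen to a square).

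I expect the crux to be the inclusion $\partial\mathcal{H}(\mathcal{G})\subseteq i^{\ast}(\Mult^{+}(A_{\mathcal{G}}))$: upgrading the ergodic/Martin description of the extreme harmonic functions to honest multiplicativity of the extended functional while controlling the linear relations among the $i(\lambda)$. This is exactly what the Vershik--Kerov Ring Theorem supplies, and the role of the hypothesis $K.K\subset K$ is precisely to make available the re-expansion of a product of basis elements back inside the cone, which is what lets multiplicativity survive the passage to the limit.
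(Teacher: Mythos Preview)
Your approach is different from the paper's and, while plausible, it trades a short algebraic reduction for a longer probabilistic detour. The paper does \emph{not} invoke the Martin boundary or the ergodic method at all: it passes to the quotient $B=A_{\mathcal{G}}/\langle i(\ast)=1\rangle$, observes that harmonic functions on $\mathcal{G}$ are in bijection with linear forms on $B$ that are nonnegative on the image $\tilde K$ of $K$ and send $1$ to $1$, and then applies the Ring Theorem (Theorem~\ref{VershKer}) as a black box. The only substantive verification is the order-unit condition ``for every $a$ there is $\varepsilon>0$ with $1-\varepsilon a\in\tilde K$'', which follows by expanding $i(\ast)^{n}=\sum_{\mu}\bigl(\sum_{\gamma:\ast\to\mu}w(\gamma)\bigr)i(\mu)$ and noting that in the quotient this reads $1-w(\gamma_{0})\,\pi(i(\mu))\in\tilde K$ for any fixed path $\gamma_{0}$ from $\ast$ to $\mu$. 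You never pass to this quotient and never check the order-unit hypothesis, which is precisely the point where connectedness of $\mathcal{G}$ enters.

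Your route---identify extreme harmonic functions as limits of Martin kernels via the ergodic method, then show those limits are multiplicative by manipulating coefficient expansions---can be made to work, and indeed is close in spirit to one proof of the Ring Theorem itself. But as written it imports an additional nontrivial input (the ergodic description of $\partial\mathcal{H}(\mathcal{G})$), and the key step ``multiply the expansions, re-expand $i(\nu_{1})i(\nu_{2})$ in $K$, read off the coefficient of $i(\mu)$, let $\mu\to\infty$'' is exactly the place where care is needed and where you ultimately fall back on the Ring Theorem anyway. If you are going to cite Theorem~\ref{VershKer}, it is much cleaner to do what the paper does: set up the quotient so that the theorem applies directly, rather than re-deriving its conclusion through Martin kernels.
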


The proof of this proposition is based on the following Theorem of Kerov and Vershik:

\begin{theorem}\cite[Section 8.4]{GO}
\label{VershKer} Let $B$ be a unital commutative algebra over $\mathbb{R}$ and
$K\subset B$ a convex cone satisfying the following conditions:

\begin{itemize}
\item $K-K=B$ ($K$ generates $B$).

\item $K.K\subset K$ ($K$ is stable by multiplication).

\item $K$ is spanned by a countable set of elements.

\item For all $a\in B$, there exists $\epsilon>0$ such that $1-\epsilon a\in
K$.
\end{itemize}

If $L$ denotes the convex set of linear forms on $B$ which are nonnegative on
$K$ and map $1_{B}$ to $1$, then $\phi$ is an extreme point of $L$ if and only
if $\phi$ is multiplicative (meaning that $\phi(ab)=\phi(a)\phi(b)$ for all
$a,b\in B$).
\end{theorem}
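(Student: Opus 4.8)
The plan is to prove the two implications separately, after recording two elementary reductions. By the fourth axiom applied to $a$ and to $-a$, the element $1_B$ is an order unit for the order $x\leq y\iff y-x\in K$, so $L$ is weak-$\ast$ compact and has extreme points; more importantly, (i) if a linear identity holds for every $a\in K$ with $1_B-a\in K$, then rescaling $a\mapsto\varepsilon a$ via the fourth axiom extends it to all $a\in K$; and (ii) since $K-K=B$, a linear identity valid on $K$ is valid on all of $B$. So in both implications it suffices to treat elements $a$ with $a,1_B-a\in K$, for which $\phi(a)\in[0,1]$ whenever $\phi\in L$.

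\emph{Extreme $\Rightarrow$ multiplicative.} Fix $a$ with $a,1_B-a\in K$ and set $t=\phi(a)$. Since $K.K\subset K$, the forms $x\mapsto\phi(ax)$ and $x\mapsto\phi((1_B-a)x)$ are nonnegative on $K$ and sum to $\phi$. If $t\in(0,1)$, then $\psi_1:=t^{-1}\phi(a\,\cdot)$ and $\psi_2:=(1-t)^{-1}\phi((1_B-a)\,\cdot)$ lie in $L$ with $t\psi_1+(1-t)\psi_2=\phi$, so extremality forces $\psi_1=\phi$, i.e.\ $\phi(ax)=\phi(a)\phi(x)$ for all $x$. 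If $t=0$, then for $x$ with $x,1_B-x\in K$ one has $0\leq\phi(ax)\leq\phi(a)=0$ because $a(1_B-x)\in K$, so $\phi(ax)=0$ for all $x$ by (i) and (ii); the case $t=1$ is the case $t=0$ applied to $1_B-a$. Reductions (i) and (ii) then give multiplicativity on all of $B$.

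\emph{Multiplicative $\Rightarrow$ extreme.} Let $\phi=\tfrac12(\phi_1+\phi_2)$ with $\phi_i\in L$; I must show $\phi_1=\phi$. On $K$ we have $0\leq\phi_i\leq 2\phi$, hence $\chi:=\phi_1-\phi$ satisfies $-\phi\leq\chi\leq\phi$ on $K$ and $\chi(1_B)=0$. Fix $a$ with $a,1_B-a\in K$ and $t=\phi(a)$. For $0\leq k\leq n$ one has $a^k(1_B-a)^{n-k}\in K$, so multiplicativity of $\phi$ yields
\[
|\chi(a^k(1_B-a)^{n-k})|\leq\phi(a^k(1_B-a)^{n-k})=t^k(1-t)^{n-k}.
\]
Expanding a polynomial $p$ of degree $\leq n$ in the Bernstein basis, $p(x)=\sum_{k=0}^n c_{k,n}(p)\binom nk x^k(1-x)^{n-k}$, and applying $\chi$ to $p(a)$ gives $|\chi(p(a))|\leq\sum_k|c_{k,n}(p)|\binom nk t^k(1-t)^{n-k}$. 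As $n\to\infty$ the Bernstein coefficients satisfy $c_{k,n}(p)=p(k/n)+O(1/n)$ uniformly in $k$, so the right-hand side equals $\mathbb{E}\,|p(S_n/n)|+o(1)$ with $S_n\sim\mathrm{Binom}(n,t)$; by the law of large numbers this tends to $|p(t)|$, whence $|\chi(p(a))|\leq|p(t)|$ for every polynomial $p$. Taking $p(x)=x-t$ gives $\chi(a)=t\,\chi(1_B)=0$, and reductions (i) and (ii) yield $\chi\equiv 0$, so $\phi_1=\phi$ and likewise $\phi_2=\phi$.

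The one genuinely delicate point is this last step: deducing from the estimates on the monomials $a^k(1_B-a)^{n-k}$ that $\chi$ vanishes on the whole subalgebra generated by $a$. Conceptually, since $\phi$ is multiplicative its restriction to that subalgebra is the evaluation $p(a)\mapsto p(t)$, a point mass in the Hausdorff moment problem; by P\'olya--Hausdorff positivity every polynomial nonnegative on $[0,1]$ is a nonnegative combination of the $x^k(1-x)^{n-k}$ for large $n$, so $-\phi\leq\chi\leq\phi$ forces the signed measure representing $\chi$ on that subalgebra to be dominated by $\delta_t$, hence a scalar multiple of $\delta_t$, which $\chi(1_B)=0$ annihilates. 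I would nonetheless present the elementary Bernstein-approximation version above, since its only external inputs are the convergence of Bernstein coefficients to sample values and Bernstein's proof of the Weierstrass theorem, keeping everything at the level of explicit polynomial identities and the order structure of $(B,K,1_B)$.
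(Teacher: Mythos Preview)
The paper does not prove this theorem; it is quoted from Gnedin--Olshanski \cite{GO} and used as a black box in the proof of Proposition~\ref{multiGraphExtreme}. Your argument is correct and is essentially the standard proof of the Kerov--Vershik ring theorem. The forward direction via the splitting $\phi=t\psi_1+(1-t)\psi_2$ with $\psi_1(x)=t^{-1}\phi(ax)$ is exactly the classical argument, and your treatment of the boundary cases $t\in\{0,1\}$ and the two linear reductions is clean.

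For the reverse direction your Bernstein estimate is sound, but it is more than you need: for the single polynomial you actually use, $p(x)=x-t$, the Bernstein coefficients are \emph{exactly} $c_{k,n}=k/n-t$ (no $O(1/n)$ correction), so the bound
\[
|\chi(a)-t\,\chi(1_B)|\;\le\;\sum_{k=0}^{n}\Bigl|\tfrac{k}{n}-t\Bigr|\binom{n}{k}t^{k}(1-t)^{n-k}
\;=\;\mathbb{E}\bigl|S_n/n-t\bigr|\;\le\;\sqrt{t(1-t)/n}\longrightarrow 0
\]
follows directly from Chebyshev's inequality. The general statement about asymptotics of Bernstein coefficients and the P\'olya--Hausdorff paragraph can therefore be omitted without loss. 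Finally, note that the countability hypothesis on $K$ plays no role in your proof (nor is it needed for this particular equivalence; it enters elsewhere in the theory to guarantee the Choquet-type integral representation).
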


We give now the proof of Proposition \ref{multiGraphExtreme}.

\begin{proof}
Let $B=A_{\mathcal{G}}/{\small        {\langle i(*)=1\rangle}}$ and let $\pi:A_{\mathcal{G}%
}\longrightarrow B$ be the canonical projection; denote by $\tilde{K}$ the
projection of the cone $\mathbb{R}^{+}\Id +K$ in $B$. Since $K.K\subset K$ and
$\lbrace1,K\rbrace$ spans $A_{\mathcal{G}}$, $\tilde{K}.\tilde{K}\subset
\tilde{K}$ and $\tilde{K}$ spans $B$. Since $\mathcal{G}$ has a countable set
of vertices, $\tilde{K}$ is spanned by a countable set of elements. Note that
there is a bijection between the elements of $\mathcal{H}(\mathcal{G})$ and
the linear forms on $B$ which are nonnegative on $\tilde{K}$ and equal to $1$
on $1$: indeed $h\in\mathcal{H}(\mathcal{G})$ if and only if $h(\mu)=\sum
_{\mu\nearrow\nu}e(\mu,\nu)h(\nu)$. Thus, for $f\in A_{\mathcal{G}}^{*}$,
$i^{*}(f)\in\mathcal{H}(\mathcal{G})$ if and only if $f(i(*)i(\mu))=f(i(\mu
))$; equivalently, this means that $f$ factors through $B$. Non-negativeness
on $\mathcal{G}$ for $i^{*}(f)$ is then equivalent to nonnegativeness on
$\tilde{K}$ for $f$, and $[i^{*}(f)](*)=1$ if and only if $f(\pi\circ
i(*))=f(1)=1$.

Let $a\in B$, and let us show that there exists
$\epsilon$ such that $1-\epsilon a\in\tilde{K}$. Since $\tilde{K}-\tilde{K}%
=B$, and $1-b\in\tilde{K}$ for all $b\in-\tilde{K}$, we can suppose without
loss of generality that $a\in\tilde{K}$. It is thus enough to prove that for
$\mu\in\mathcal{G}$, there exists $\epsilon$ such that $1-\epsilon\pi\circ
i(\mu)\in K$. Suppose that $\mu$ has rank $n$. Since the graph is connected,
there exists a path $\gamma_{0}$ of weight $w(\gamma_{0})$ between $*$ and
$\mu$. By iteration of the relation coming from the multiplicative structure
of $\mathcal{G}$, $i(*)^{n}=\sum_{\substack{\nu\in\mathcal{G}\\rk(\mu
)=n}}(\sum_{\gamma:*\rightarrow\mu}w(\gamma))i(\nu)$. Thus $i(*)^{n}%
-w(\gamma_{0})i(\mu)$ belongs to $K$. Since $\pi(i(*)^{n})=1$, $1-w(\gamma
_{0})\pi\circ i (\mu)$ belongs to $\tilde{K}$. Therefore, we can apply
Theorem \ref{VershKer} to $(B,\tilde{K})$, which yields that the extreme
linear maps among the set of linear maps on $B$ which are nonnegative on
$\tilde{K}$ and equal to $1$ on $1$ are the multiplicative ones. Since there
is a bijection between multiplicative maps on $B$ which are nonnegative on
$\tilde{K}$ and multiplicative maps on $A_{\mathcal{G}}$ which are nonnegative
on $K$ and equal to $1$ on $i(*)$, the proof is complete.
\end{proof}

\section{Littelmann paths in Weyl chambers}

We describe a class of random paths coming from the representation theory of
semi-simple Lie groups.

\subsection{Background}

We consider a simple Lie group $G$ over $\mathbb{C}$ and its Lie algebra
$\mathfrak{g}$.\ Let $R\subset V$ be the set of roots of $\mathfrak{g}$
regarded as a finite subset of the euclidean vector space $V$. We fix $R_{+}$
a subset of positive roots and $S=\{\alpha_{1},\alpha_{2},\ldots,\alpha
_{d}\}\subset R_{+}$ a basis of $R$. The Weyl group of $\mathfrak{g}$ is
denoted by $W$.

Write $P$ for the weight lattice of $\mathfrak{g}$ and $\omega_{1}%
,\ldots,\omega_{d}$ for its fundamental weights so that we have
\[
P={\bigoplus\limits_{i=1}^{d}}\mathbb{Z\omega}_{i}.
\]
We denote by $\Delta$ the fundamental Weyl chamber of $\mathfrak{g}$ with
respect to $S$, which corresponds to the positive orthant on the weight space
$\bigoplus_{i=1}^{d}\mathbb{R}\omega_{i}$. The cone of dominant weights is
then
\[
P^{+}=P\cap\Delta={\bigoplus\limits_{i=1}^{d}}\mathbb{N\omega}_{i}.
\]
Write $Q^{+}$ the subset of $P$ spanned by linear combinations of the
simple roots with nonnegative coefficients. We denote by
$\mathbb{R}[P]$ the ring group of $P$ over $\mathbb{R}$ with basis
$\{e^{\beta}\mid\beta\in P\}$, and by $\mathbb{R}[Q^{+}]$ the subalgebra of
$\mathbb{R}[P]$ generated by $Q^{+}$. Then
\[
\mathbb{R}^{W}[P]=\{u\mid w(u)=u,w\in W\}
\]
is the character ring of $\mathfrak{g}$.\ To each $\lambda\in P_{+}$
corresponds a simple finite-dimensional representation of $\mathfrak{g}$ we
denote by $V(\lambda)$.\ The Weyl character of $V(\lambda)$ is
\[
s_{\lambda}=\sum_{\gamma\in P}K_{\lambda,\gamma}e^{\gamma}%
\]
where $K_{\lambda,\gamma}$ is the dimension of the weight space $\gamma$ in
$V(\lambda)$. For $\vec{t}\in(\mathbb{R}^{+})^{d}$ and $\gamma\in \mathbb{R}^{d}$, set $\vec{t}^{\gamma}=\prod_{1\leq i\leq d}\exp(\gamma_{i}\log(t_{i}))$. It is then possible to define the evaluation of $s_{\lambda}$ on $\vec{t}\in(\mathbb{R}^{+})^{d}$ with the formula $s_{\lambda}(\vec{t})=\sum_{\gamma\in P} K_{\lambda,\gamma}\vec{t}^{\gamma}$. 
% By the Weyl character formula we have
%\begin{equation}
%s_{\lambda}=\frac{\sum_{w\in W}\varepsilon(w)e^{w(\lambda+\rho)-\rho}}%
%{\prod_{\alpha\in R_{+}}(1-e^{-\alpha})} \label{WCF}%
%\end{equation}
%where $\rho=\frac{1}{2}\sum_{\alpha\in R_{+}}\alpha.$
For $\mu\geq\lambda$ (that is $\mu-\lambda$ is a sum of simple roots),
denote by $S_{\lambda,\mu}$ the function
\[
S_{\lambda,\mu}=e^{-\mu}s_{\lambda}=\sum_{\gamma\in P}K_{\lambda,\gamma
}e^{\gamma-\mu}%
\]
where for any $\gamma$ such that $K_{\lambda,\gamma}>0$, $\gamma-\mu$ is a
linear combination of the simple roots with nonpositive coefficients; for
$\mu=\lambda$, we simply write $S_{\lambda}$, instead of $S_{\lambda,\lambda}%
$. By setting $T_{i}=e^{-\alpha_{i}}$ we thus obtain that $S_{\lambda,\mu
}=S_{\lambda,\mu}(T_{1},\ldots,T_{d})$ is polynomial in the variables
$T_{1},\ldots,T_{d}$ with nonnegative integer coefficients. Recall also the
Weyl dimension formula%
\[
\dim(V(\lambda))=\prod_{\alpha\in R_{+}}\frac{(\lambda+\rho,\alpha)}%
{(\rho,\alpha)}.%
\]
In particular, $\dim(V(\lambda))$ is polynomial in the coordinates of
$\lambda$ on the basis of fundamental weights.

\subsection{Random Littelmann paths}

Now, fix a dominant weight $\delta\in P^{+}$ and denote by $\Pi_{\delta}$ the
set of weights of the irreducible representation $V(\delta)$. Let $P_{\delta}$
be the sublattice of $P$ generated by $\Pi_{\delta}$.\ This defines
subalgebras
\[
\mathbb{R}[P_{\delta}]=\{e^{\beta}\mid\beta\in P_{\delta}\}\subset
\mathbb{R}[P]\text{ and }\mathbb{R}^{W}[P_{\delta}]=\{u\in\mathbb{R}%
[P_{\delta}]\mid w(u)=u\}\subset\mathbb{R}^{W}[P].
\]
Finally write $T_{\delta}^{+}$ the subset of $P^{+}$ of weights $\lambda$
such that $V(\lambda)$ appears as an irreducible component in a tensor power
$V(\delta)^{\otimes n},n\geq0$. Given $\lambda$ and $\mu$ in $T_{\delta
}^{+},$ we clearly have $\lambda+\mu$ in $T_{\delta}^{+}$. Moreover the
$\mathbb{Z}$-lattice $T_{\delta}$ generated by $T_{\delta}^{+}$ is a
sublattice of $P_{\delta}$.\ We thus have the following inclusions of
$\mathbb{Z}$-lattices%
\[
T_{\delta}\subset P_{\delta}\subset P.
\]
Since $B=(\omega_{1},\ldots,\omega_{d})$ is a $\mathbb{Z}$-basis of $P_{+}$
there exists $(q_{1},\ldots,q_{d})\in\mathbb{Z}_{>0}$ such that $q_{i+1}\mid
q_{i}$ for any $i=1,\ldots,d-1$ and
\[
P_{\delta}={\displaystyle\bigoplus\limits_{i=1}^{d}} \mathbb{Z}_{\geq0}%
q_{i}\mathbb{\omega}_{i}.
\]
Now let $\mathcal{A}_{\delta}$ be the subalgebra of $\mathbb{R}^{W}[P]$
generated by the Weyl character $s_{\lambda}$ with $\lambda\in T_{\delta}^{+}%
$.\ We have the inclusions%
\[
\mathcal{A}_{\delta}\subset\mathbb{R}^{W}[P_{\delta}]\subset\mathbb{R}%
[P_{\delta}]\subset\mathbb{R}[P]\text{.}%
\]
We denote by $K(\delta)$ the convex hull of the set $\Pi_{\delta}$:
$K(\delta)$ is a polytope whose extreme points are the elements $w(\delta)$
for $w\in W$. The intersection of $K(\delta)$ with the Weyl chamber $\Delta$
is denoted by $K(\delta)^{+}$. By Littelmann's paths theory, there is a set
$B(\delta)=\{\gamma_{i}\}_{1\leq i\leq\dim V(\delta)}$ of infinitesimal paths
on $P_{\delta}$, with the following properties:

\begin{itemize}
\item $\gamma_{i}(1)\in\Pi_{\delta}$ for all $1\leq i\leq\dim V(\delta)$.

\item The multiplicity of the weight $\mu$ in $V(\delta)^{\otimes n}$ is equal to
$\#\Gamma_{\mathbb{R}^{d}}(\mu,n)$.

\item The multiplicity of the irreducible representation $V(\nu)$ in $V(\mu)\otimes V(\delta)$ is equal to $\#\Gamma_{\Delta}(\mu,\nu)$ and the multiplicity of the irreducible representation $V(\nu)$ in
$V(\delta)^{\otimes n}$ is equal to $\#\Gamma_{\Delta
}(\nu,n)$ for all $\mu,\nu\in P^{+}$ and $n\geq0$.
\end{itemize}

The set of infinite paths we are interested in is the set of infinite paths starting at $0$ with set of infinitesimal paths $B(\delta)$.

\subsection{Statements of the result}

We recall that we consider the space of
probability measures on each $\Gamma(\Omega)$ with the initial topology with
respect to the evaluation maps on the cylinders $\Gamma_{\Omega}(\tau)$,
$\tau\in\Gamma_{f}(\Omega)$. We give an algebraic proof of the 
identification of the minimal boundaries for random paths in $\Gamma
(\mathbb{R}^{d})$ and $\Gamma_{\infty}(\Delta)$ with the topological spaces
$K(\delta)$ and $K(\delta)^{+}$, respectively. In both cases, the
homeomorphism can be made explicit by the introduction of a natural
parametrization $t:K(\delta)\longrightarrow\lbrack0,1]^{d}\times W$ of
$K(\delta)$ such that $t(K(\delta)^{+})\subset\lbrack0,1]^{d}\times \Id_{W}$
(this parametrization is explained in Section $5$). For $m\in K(\delta)$, we
denote by $(\vec{t}_{m},w_{m})$ the image of $m$ through this parametrization.
The main result of the paper is summarized in the following theorem: 

\begin{theorem}
\label{mainresult} A homeomorphism between the set of extremal measures $\partial\mathcal{H}_{\infty
}(\mathbb{R}^{d})$ and $K(\delta)$ is given by the map
\[
\mathbb{P}:\left\{
\begin{matrix}
K(\delta) & \longrightarrow & \partial\mathcal{H}_{\infty}(\mathbb{R}^{d})\\
m & \mapsto & \mathbb{P}_{m}%
\end{matrix}
\right.
\]
such that $\mathbb{P}_{m}(\Gamma_{\mathbb{R}^{d}}(\gamma
))=\frac{\vec{t}_{m}^{N\delta-w_{m}(\lambda)}}{S_{\delta}(\vec{t}_{m})}$ for
all $\gamma\in\Gamma_{\mathbb{R}^{d}}(\lambda,N)$. A homeomorphism between the set
of extremal measures $\partial\mathcal{H}_{\infty}(\Delta)$ and $K(\delta)^{+}$ is given by the map
\[
\mathbb{P}^{+}\left\{
\begin{matrix}
K(\delta)^{+} & \longrightarrow & \partial\mathcal{H}_{\infty}(\Delta)\\
m & \mapsto & \mathbb{P}_{m}^{+}%
\end{matrix}
\right.
\]
such that $\mathbb{P}_{m}^{+}(\Gamma_{\Delta}(\gamma))=\frac
{S_{\lambda,N\delta}(\vec{t}_{m})}{S_{\delta}(\vec{t}_{m})^{N}}$ for all
$\gamma\in\Gamma_{\Delta}(\lambda,N)$.
\end{theorem}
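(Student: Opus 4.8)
The plan is to run the general Kerov--Vershik machinery of Section \ref{Sec_generalities} twice: first for $\Gamma(\mathbb{R}^d)$, then to bootstrap from it the statement for $\Gamma(\Delta)$. The growth graphs $\mathcal{G}(\mathbb{R}^d)$ and $\mathcal{G}(\Delta)$ are multiplicative: the first is modelled on the group algebra $\mathbb{R}[P_\delta]$ with distinguished element $s_\delta$ and positive cone generated by the $e^{\mu}$, $\mu\in\Pi_\delta$, and sits inside the completed algebra $\hat T_\delta$ of Section 3; the second is modelled on the character algebra $\mathcal{A}_\delta$, completed to $\hat T_\delta^+$, with positive cone generated by the Weyl characters $s_\lambda$, $\lambda\in T_\delta^+$, and edge weights the tensor multiplicities $\#\Gamma_\Delta(\mu,\nu)$. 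In both cases the cone is stable under multiplication, so Proposition \ref{multiGraphExtreme} identifies $\partial\mathcal{H}_\infty(\mathbb{R}^d)$ (resp. $\partial\mathcal{H}_\infty(\Delta)$) with the set of (continuous) multiplicative linear functionals on $\hat T_\delta$ (resp. $\hat T_\delta^+$) that are nonnegative on the relevant cone and normalised at the root. Thus it suffices to classify these functionals and to check that the maps $\mathbb{P}$ and $\mathbb{P}^+$ of the theorem realise the homeomorphisms of Proposition \ref{multiGraphExtreme}.

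For $\Gamma(\mathbb{R}^d)$, a multiplicative functional on $\mathbb{R}[P_\delta]$ is an evaluation $e^{\mu}\mapsto \vec u^{\mu}$, and nonnegativity on the $W$-stable generating set $\{e^\mu:\mu\in\Pi_\delta\}$ forces $\vec u$ to have positive coordinates; continuity with respect to the completion defining $\hat T_\delta$ then confines the parameters to a compact range. Dividing by $\phi(s_\delta)^N=(\vec u^{\delta}S_\delta(\vec u))^N$ gives the harmonic function $p(\lambda,N)=\vec u^{\lambda-N\delta}/S_\delta(\vec u)^N$, which after rewriting $\vec u$ through the normal form $t:K(\delta)\to[0,1]^d\times W$ of Section 5 produces exactly the formula claimed for $\mathbb{P}_m$. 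One checks this family exhausts $\partial\mathcal{H}_\infty(\mathbb{R}^d)$ after passing to the closure inside the compact Hausdorff space $M_1(\Gamma(\mathbb{R}^d))$, the boundary strata of $[0,1]^d\times W$ corresponding to degenerate limits of the evaluations.

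The evaluation parametrization is not injective --- distinct data $(\vec t,w)$ may yield the same measure --- so one restricts to the subset $[0,1]_\delta^d$ cut out via the $\delta$-admissible subsets of $S$ of \cite{Vin} (Section 4), on which the parametrization becomes one-to-one. Proposition \ref{bijectionPolytop} then matches $[0,1]_\delta^d$ together with its Weyl datum with $K(\delta)$, by sending a tuple to the drift of the corresponding random Littelmann path of \cite{LLP1,LLP2}, and restricts this to a bijection of the dominant part onto $K(\delta)^+$; since both the drift and the cylinder probabilities depend continuously on the parameters, and $K(\delta)$ is compact while $M_1(\Gamma(\mathbb{R}^d))$ is Hausdorff, the resulting continuous bijection is automatically a homeomorphism, giving the first assertion. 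For $\Gamma(\Delta)$ one argues that a nonnegative morphism of $\hat T_\delta^+$ is the restriction of a nonnegative morphism of $\hat T_\delta$ --- equivalently, the Weyl-character evaluation composed with the alternating-sum (Weyl denominator) operator --- so that $\mathbb{P}_m^+(\Gamma_\Delta(\gamma))=S_{\lambda,N\delta}(\vec t_m)/S_\delta(\vec t_m)^N$; together with $t(K(\delta)^+)\subset[0,1]^d\times\{\Id_W\}$ and the same topological argument this yields the homeomorphism with $K(\delta)^+$. The delicate point of this half is showing that \emph{every} extremal harmonic function of the conditioned walk arises this way, for which one invokes the classical correspondence between real polynomials with only nonpositive roots and totally positive sequences, together with plethysm identities relating Schur and Weyl characters of $\mathfrak{g}$, to exclude spurious functionals.

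The main obstacle is precisely the combination of the injectivity reduction and the ``no spurious functionals'' step: constructing $[0,1]_\delta^d$ so that the character-evaluation family parametrizes the boundary without redundancy, and proving via totally positive sequences and plethysm that the only admissible nonnegative morphisms are the expected evaluations. The Kerov--Vershik reduction of Step 1, the computation of the functions $p(\lambda,N)$, and the compactness/Hausdorff upgrade from continuous bijection to homeomorphism are comparatively formal.
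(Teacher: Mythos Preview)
Your outline tracks the paper's strategy closely: reduce via Proposition~\ref{multiGraphExtreme} to nonnegative multiplicative functionals on $\hat T_\delta$ and $\hat T_\delta^+$, classify the former, extend to the latter, and finish with the drift map of Proposition~\ref{bijectionPolytop} plus a compactness argument. Two points, however, need correction.

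First, a minor one: there is no ``completion'' defining $\hat T_\delta$, and no continuity argument confines the parameters to a compact range. The algebra $\hat T_\delta$ is purely algebraic (a graded subalgebra of $\mathbb{R}[P_\delta]\times\mathbb{R}[T]$), and a nonnegative multiplicative functional on it is determined by arbitrary nonnegative values on the generators $(e^\gamma,1)$, $\gamma\in\Pi_\delta$; these need not come from an evaluation on $\mathbb{R}[P_\delta]$, and some may vanish. The reduction to $\vec t\in[0,1]^d$ is obtained instead by first passing to a Weyl translate with drift $M_f\in\Delta$ and then computing $\langle M_f,\alpha_i\rangle$ as in Lemma~\ref{Prop_reduc[0,1]}: the sign of this pairing forces $t_i\le 1$. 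The face structure of $K(\delta)$ and the $\delta$-admissible subsets then enter to handle the vanishing coordinates (Proposition~\ref{descriptionNonZeroSet}, Corollary~\ref{unicity}).

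Second, and more seriously, your treatment of $\Gamma(\Delta)$ establishes only surjectivity of $\Psi^+:[0,1]_\delta^d\to\Mult(\hat T_\delta^+)^+$. The AESW/plethysm argument shows that every nonnegative morphism of $\hat T_\delta^+$ extends to one of $\hat T_\delta$, hence equals some $f_{\vec t}$; but since $\hat T_\delta^+$ is a \emph{proper} subalgebra, two distinct $\vec t,\vec\tau\in[0,1]_\delta^d$ could a priori restrict to the same functional on $\hat T_\delta^+$ while differing on $\hat T_\delta$. Ruling this out is the content of Section~5.3 (Proposition~\ref{injectivity}) and is not formal: one must produce, for each simple root $\alpha$ in a $\delta$-admissible subset, elements $(\lambda,n)\in\hat T_\delta^+$ with $n\delta-\lambda$ supported on roots of controlled depth (Lemmas~\ref{existenceHighestWeightinit} and~\ref{existenceHighestWeight}, via Littelmann operators along Dynkin subchains), and then combine the bounds of Lemma~\ref{lemma_encadre} with the polynomiality of $\dim V(k\lambda)$ in $k$ to force $t_i=\tau_i$ coordinate by coordinate. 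Without this step your map $K(\delta)^+\to\partial\mathcal H_\infty(\Delta)$ is not known to be injective, and the compactness upgrade to a homeomorphism does not apply.
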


It is easy to see that the measures $\mathbb{P}_{m}$ and $\mathbb{P}_{m}^{+}$ are indeed central.
Note moreover that for $m\in K(\delta)^{+}$, Littelmann's theory yields that for $\gamma
\in\Gamma_{\Delta}(y,N)$,
\begin{align*}
\sum_{\tilde{\gamma}\in\Gamma_{N+1}(\Delta),\tilde{\gamma}_{\downarrow N}=\gamma
}\mathbb{P}_{m}^{+}(\Gamma_{\Delta}(\tilde{\gamma}))=  &  \frac
{\sum_{\mu\in B(\delta),\gamma.\mu\in\Gamma_{\Delta}(y,N+1)}S_{\gamma(N)+\mu
(1),N\delta+x}(\vec{t}_{m})}{S_{\delta}(\vec{t}_{m})^{N+1}}\\
=  &  \frac{S_{\gamma(N),N\delta}(\vec{t}_{m})S_{\delta}(\vec{t}_{m}%
)}{S_{\delta}(t_{m})^{N+1}}\\
=  &  \frac{S_{\gamma(N),N\delta}(t_{1},\dots,t_{d})}{S_{\delta}(\vec{t}%
_{m})^{N}}\\
=  &  \mathbb{P}_{m}^{+}(\Gamma_{\Delta}(\gamma)),
\end{align*}
so that $\mathbb{P}_{m}^{+}$ is a well defined probability measure on
$\Gamma_{\infty}(\Delta)$. The main point of the result is to prove that $\mathbb{P}$
and $\mathbb{P}^{+}$ are bijective.

\begin{remark}
In type $A_{d}$, when $\delta=\omega_{1}$ is the first fundamental weight,
$V(\delta)$ can be regarded as the defining representation of $\mathfrak{sl}%
_{d+1}$ or more conveniently, of $\mathfrak{gl}_{d+1}$. The set $\partial
\mathcal{H}_{\infty}(\Delta)$ is then homeomorphic to
\[
K(\delta)^{+}=\{(p_{1},\ldots,p_{n+1})\in\mathbb{R}^{d+1}\mid p_{1}\geq
\cdots\geq p_{n+1}\geq0\text{ and }p_{1}+\cdots+p_{n+1}=1\}
\]
and we recover the finite-dimensional version of the Thoma simplex.
\end{remark}
As a corollary of Theorem \ref{mainresult}, we get the complete characterization of $c$-harmonic measures killed when exiting $\Delta$. Define the function $\hat{s}_{\delta}:\partial \mathcal{H}_{\infty}(\Delta)\longrightarrow \mathbb{R}^{+}\cup\lbrace \infty\rbrace$ by $\hat{s}_{\delta}(\mathbb{P}_{m})=s_{\delta}(\vec{t}_{m})$.
\begin{corollary}\label{corollary}
For $c>0$, the set $\partial\mathcal{H}_{c}(\Delta)$ is homeomorphic to $\hat{s}_{\delta}^{-1}(\lbrace c\dim V(\delta)\rbrace)$. In particular,
\begin{itemize}
\item $\mathcal{H}_{1}(\Delta)$ is a singleton corresponding to $\mathbb{P}_{\vec{0}}$,
\item and for $c<1$, $\mathcal{H}_{c}(\Delta)=\emptyset$.
\end{itemize}
\end{corollary}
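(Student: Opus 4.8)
The plan is to transport the description of $\partial\mathcal{H}_\infty(\Delta)$ furnished by Theorem \ref{mainresult} through the correspondence between $c$-harmonic functions and time-homogeneous central measures established in Section \ref{doobconditioning}. Recall from that section that $\mathcal{H}_c(\Delta)$ consists exactly of those central measures whose associated function has the separated form $p_\omega(x,n)=p(x)/(cZ)^n$ with $Z=\dim V(\delta)$, and that for such a measure $p$ is a $c$-harmonic function on $\Lambda\cap\Delta$. So the first step is to read off, from the formula $\mathbb{P}_m^+(\Gamma_\Delta(\gamma))=S_{\lambda,N\delta}(\vec t_m)/S_\delta(\vec t_m)^N$ valid for $\gamma\in\Gamma_\Delta(\lambda,N)$, when an extremal measure $\mathbb{P}_m^+$ lies in $\mathcal{H}_c(\Delta)$. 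Writing $S_{\lambda,N\delta}=e^{-N\delta}s_\lambda$ in the evaluated variables, one has $S_{\lambda,N\delta}(\vec t_m)=\vec t_m^{-N\delta}s_\lambda(\vec t_m)$, hence
\[
\mathbb{P}_m^+(\Gamma_\Delta(\gamma))=\frac{\vec t_m^{-N\delta}s_\lambda(\vec t_m)}{S_\delta(\vec t_m)^N}=\frac{s_\lambda(\vec t_m)}{\bigl(\vec t_m^{\,\delta}S_\delta(\vec t_m)\bigr)^N}=\frac{s_\lambda(\vec t_m)}{s_\delta(\vec t_m)^N},
\]
since $\vec t_m^{\,\delta}S_\delta(\vec t_m)=s_\delta(\vec t_m)$ by definition of $S_\delta$. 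Thus $\mathbb{P}_m^+$ already has the required separated form with $p(x)=s_\lambda(\vec t_m)$ (for $x$ the endpoint in a rank-$N$ path) and $cZ=s_\delta(\vec t_m)$; equivalently $\mathbb{P}_m^+\in\mathcal{H}_c(\Delta)$ precisely when $s_\delta(\vec t_m)=c\dim V(\delta)$, i.e. when $\hat s_\delta(\mathbb{P}_m)=c\dim V(\delta)$. This identifies the extreme points of $\mathcal{H}_c(\Delta)$ with $\hat s_\delta^{-1}(\{c\dim V(\delta)\})$ as a set; continuity of $\hat s_\delta$ (which follows from continuity of $m\mapsto\vec t_m$ in the parametrization of Section 5 together with continuity of the polynomial evaluation $s_\delta$) upgrades this to a homeomorphism, using that $\partial\mathcal{H}_c(\Delta)=\partial\mathcal{H}_\infty(\Delta)\cap\mathcal{H}_c(\Delta)$, which in our situation is exactly the equality asserted (and obtained) by describing both sides explicitly.

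For the two special cases, I would argue directly on the polytope side. Since the coordinates of $\vec t_m$ all lie in $[0,1]$ and the fundamental-weight pairing ensures every weight $\gamma$ of $V(\delta)$ satisfies $\vec t_m^{\,\gamma}\le 1$ with equality only when $\vec t_m^{\,\gamma}=1$, one gets $s_\delta(\vec t_m)=\sum_{\gamma}K_{\delta,\gamma}\vec t_m^{\,\gamma}\le\sum_\gamma K_{\delta,\gamma}=\dim V(\delta)$, with equality if and only if $\vec t_m=\vec 1$ — which under the parametrization corresponds to a single point, namely $m=0$ and the measure $\mathbb{P}_{\vec 0}$. Hence $\hat s_\delta^{-1}(\{\dim V(\delta)\})=\{\mathbb{P}_{\vec 0}\}$, giving the first bullet, and $\hat s_\delta$ never takes a value below $\dim V(\delta)$, so $\hat s_\delta^{-1}(\{c\dim V(\delta)\})=\emptyset$ for $c<1$, giving the second. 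The only subtlety to check here is that the bound $\vec t_m^{\,\gamma}\le 1$ genuinely holds for every $\gamma\in\Pi_\delta$ and not merely for the highest weight; this is where one uses that $\delta-\gamma\in Q^+$ for every weight $\gamma$ of $V(\delta)$, so $\vec t_m^{\,\gamma}=\vec t_m^{\,\delta}\cdot\vec t_m^{-(\delta-\gamma)}$ and $\vec t_m^{-(\delta-\gamma)}\ge 1$ would push the wrong way — so I would instead normalize via $S_\delta$, where the exponents $\gamma-\delta$ are sums of simple roots with \emph{nonpositive} coefficients and the $T_i=t_i\in[0,1]$ give each monomial $\le 1$ outright, and $\hat s_\delta=s_\delta(\vec t_m)=\vec t_m^{\,\delta}\cdot(\text{sum}\ge\text{leading term }1)$ combined with $\vec t_m^{\,\delta}\le 1$.

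The main obstacle I anticipate is the bookkeeping in the special cases rather than the structural identification: one must be careful that $s_\delta(\vec t_m)$, a priori a product of a monomial $\le 1$ and a polynomial in $\vec t_m$ that is $\ge 1$, really does achieve its maximum $\dim V(\delta)$ only at $\vec t_m=\vec 1$, and that this point is attained in $[0,1]^d_\delta$ and corresponds to a unique $m\in K(\delta)^+$ under the Section 5 parametrization (so that $\mathbb{P}_{\vec 0}$ is well-defined and equals the claimed singleton). Verifying that $\vec t_m=\vec 1$ forces $m=0$ is immediate from the explicit form of the parametrization $t:K(\delta)^+\to[0,1]^d_\delta$, and the monotonicity/strict-inequality claims reduce to the elementary observation that a polynomial with nonnegative coefficients evaluated on $[0,1]^d$ is maximized at the all-ones point, strictly so as soon as some coefficient sits on a genuinely present monomial — which holds here because $S_\delta$ has a nonconstant term (as $V(\delta)$ has a weight other than $\delta$, $\delta$ being a dominant weight of a simple Lie algebra). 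Everything else is a transcription of Section \ref{doobconditioning} combined with Theorem \ref{mainresult}.
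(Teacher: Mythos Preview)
Your identification of $\partial\mathcal{H}_\infty(\Delta)\cap\mathcal{H}_c(\Delta)$ with $\hat s_\delta^{-1}(\{c\dim V(\delta)\})$ via the separated form $s_\lambda(\vec t_m)/s_\delta(\vec t_m)^N$ is correct (on the locus $\vec t_m\in\,]0,1]^d$), and it gives one inclusion $\hat s_\delta^{-1}(\{cZ\})\subset\partial\mathcal{H}_c(\Delta)$ immediately. But the reverse inclusion is exactly the content you defer to the sentence ``$\partial\mathcal{H}_c(\Delta)=\partial\mathcal{H}_\infty(\Delta)\cap\mathcal{H}_c(\Delta)$, which in our situation is obtained by describing both sides explicitly.'' That is circular: the paper warns in Section~\ref{doobconditioning} that this equality is not known in general and must be \emph{proved} here. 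The paper's proof does the missing work by taking an arbitrary $P\in\mathcal{H}_c(\Delta)$, writing its Choquet decomposition $P=\int\mathbb{P}_m^+\,d\mu(m)$ from Corollary~\ref{measureDescription}, and showing via a limit argument (fix $x$, send the length to infinity along a sequence $\gamma_n\in\Gamma_\Delta(x,\phi(n))$) that $\mu$ must be supported on $\hat s_\delta^{-1}(\{cZ\})$; the boundary case $\hat s_\delta=+\infty$ (some $t_i=0$) is also disposed of there. Your argument does not touch this step.

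The second, more concrete, problem is the inequality for $s_\delta$. You assert $s_\delta(\vec t_m)\le\dim V(\delta)$ and then, two lines later, conclude that $\hat s_\delta$ ``never takes a value below $\dim V(\delta)$''; these statements contradict each other, and it is the \emph{latter} that is true. Your attempted justification fails for the reason you yourself notice: $\vec t_m^{\,\gamma}\le 1$ is false for general $\gamma\in\Pi_\delta$ (already in $A_1$ with $\delta=\omega_1$ one has $s_\delta(t)=t^{1/2}+t^{-1/2}\to+\infty$ as $t\to 0^+$), and the proposed fix via $S_\delta$ produces a product of a factor $\le 1$ and a factor $\ge 1$, which bounds nothing. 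The paper's argument is different and essential: $\log s_\delta$ is strictly convex in the log-coordinates $u_i=\log t_i$ (its Hessian is a nondegenerate covariance matrix), and its gradient equals $\vec M$, which vanishes precisely at $\vec t=\vec 1$; hence $\vec t=\vec 1$ is the unique global \emph{minimum} of $s_\delta$, with value $\dim V(\delta)$. This is what yields both bullets.
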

We prove Corollary \ref{corollary} in Section \ref{proofCorollary}. We discuss here a possible generalization of the latter result. Let $X$ be an arbitrary set of infinitesimal paths, and let $wg:X\longrightarrow \mathbb{R}^{+}$ be a weight function. We denote by $\hat{Z}(\vec{t}):=\sum_{\gamma\in X}wg(\gamma)\vec{t}^{\gamma(1)}$ the partition function for this weighting, and we simply write $Z$ for $\hat{Z}(1)$. Finally, let us fix a cone $\mathcal{C}$ centered at $0$ and denote by $K_{\mathcal{C}}$ the set of elements $\vec{t}\in\mathbb{R}^{d}$ such that $\sum_{\gamma\in X}wg(\gamma)\vec{t}^{\gamma(1)}\gamma(1)\in \mathcal{C}$.\\
The definition of $c$-harmonic functions for weighted paths is similar to the one for unweighted paths; namely, a function $f$ is $c$-harmonic if and only if 
$$f(x)=\sum_{\substack{\gamma\in X\\x+\gamma\subset \mathcal{C}}}\frac{1}{cZ}wg(\gamma)f(x+\gamma(1)).$$
We use the same notation as in the unweighted case, since the theory is the same in this broader situation. Then, we conjecture that the following general result holds:
\begin{conjecture}
For $c>0$, the set $\partial\mathcal{H}_{c}(\Delta)$ is homeomorphic to $\hat{Z}^{-1}(\lbrace cZ\rbrace)\cap K_{\mathcal{C}}$. In particular,
\begin{itemize}
\item for $u=\min_{K_{\mathcal{C}}} \hat{Z}$, $\mathcal{H}_{u/Z}(\Delta)$ is a singleton.
\item and for $c<u/Z$, $\mathcal{H}_{c}(\Delta)=\emptyset$.
\end{itemize}
\end{conjecture}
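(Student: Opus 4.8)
Since the conjecture concerns an arbitrary weighted set $X$ of infinitesimal paths and an arbitrary cone $\mathcal{C}$, the natural plan is to reproduce, at this level of generality, the chain of arguments leading to Corollary~\ref{corollary}, replacing the representation-theoretic input (Weyl characters and Littelmann's path model) by general facts on Martin boundaries of killed random walks in cones. The three conceptual steps are: (i) identify $\partial\mathcal{H}_{\infty}(\mathcal{C})$ with a family of Doob-transformed exponential harmonic functions parametrized by $K_{\mathcal{C}}$, in the spirit of Theorem~\ref{mainresult} and Proposition~\ref{bijectionPolytop}; (ii) observe, exactly as in the proof of Corollary~\ref{corollary}, that such a measure is $c$-harmonic precisely when its exponential parameter $\vec t$ satisfies $\hat Z(\vec t)=cZ$; and (iii) analyse the convex function $\hat Z$ restricted to $K_{\mathcal{C}}$.

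For step~(i), start from the free weighted walk $\omega$ killed on leaving $\mathcal{C}$. A direct computation shows that the exponential $h_{\vec t}(x)=\vec t^{\,x}$ is $\tfrac{\hat Z(\vec t)}{Z}$-harmonic for the free walk, and that tilting the step distribution by $\vec t$ (the Cram\'er transform) produces a walk whose drift is $\nabla\log\hat Z(\vec t)$. When this drift lies in $\mathcal{C}$, i.e.\ when $\vec t\in K_{\mathcal{C}}$, the tilted walk killed on $\partial\mathcal{C}$ should admit a unique (up to scalar) positive harmonic function $W_{\vec t}$ of Denisov--Wachtel type, and $H_{\vec t}(x)=\vec t^{\,x}W_{\vec t}(x)$ is then a minimal $\tfrac{\hat Z(\vec t)}{Z}$-harmonic function for the killed-in-$\mathcal{C}$ walk; by Subsection~\ref{doobconditioning} it corresponds to a central measure $\mathbb{P}_{\vec t}\in\partial\mathcal{H}_{\infty}(\mathcal{C})$ with associated function $p_{\mathbb{P}_{\vec t}}(x,n)=H_{\vec t}(x)/\hat Z(\vec t)^{n}$. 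The substantive claim is that the $\mathbb{P}_{\vec t}$, $\vec t\in K_{\mathcal{C}}$, exhaust $\partial\mathcal{H}_{\infty}(\mathcal{C})$, i.e.\ that the Martin boundary of the killed-in-$\mathcal{C}$ free walk consists only of such exponential-times-$W$ functions; this is the general-cone analogue of Theorem~\ref{mainresult}, and I expect it to be the main obstacle. For nice (convex, smooth) cones it should follow from known Martin boundary theory (Ney--Spitzer, Denisov--Wachtel, Ignatiouk-Robert), but for an arbitrary cone $\mathcal{C}$ it is genuinely delicate, both because the Denisov--Wachtel machinery requires regularity of $\partial\mathcal{C}$ and because ruling out ``exotic'' extremal harmonic functions is the hard part of any Martin boundary computation.

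Granting (i), step~(ii) is the same computation as in the proof of Corollary~\ref{corollary}: by Subsection~\ref{doobconditioning}, $\mathbb{P}_{\vec t}\in\mathcal{H}_{c}(\mathcal{C})$ if and only if $p_{\mathbb{P}_{\vec t}}(x,n)=p(x)/(cZ)^{n}$ for some $p$, and comparing with $p_{\mathbb{P}_{\vec t}}(x,n)=H_{\vec t}(x)/\hat Z(\vec t)^{n}$ this holds exactly when $\hat Z(\vec t)=cZ$. Combined with (i) this yields the homeomorphism $\partial\mathcal{H}_{c}(\mathcal{C})\cong\hat Z^{-1}(\{cZ\})\cap K_{\mathcal{C}}$, and along the way the equality $\partial\mathcal{H}_{c}(\mathcal{C})=\partial\mathcal{H}_{\infty}(\mathcal{C})\cap\mathcal{H}_{c}(\mathcal{C})$ discussed in Subsection~\ref{doobconditioning}.

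Finally, for step~(iii), write $\vec t=(e^{s_{1}},\dots,e^{s_{d}})$, so that $s\mapsto\hat Z(e^{s})=\sum_{\gamma\in X}wg(\gamma)e^{\langle\gamma(1),s\rangle}$ is convex, and strictly convex once $\{\gamma(1)\mid wg(\gamma)>0\}$ spans $\mathbb{R}^{d}$. If $c<u/Z$ then $cZ<u=\min_{K_{\mathcal{C}}}\hat Z$, so $\hat Z^{-1}(\{cZ\})\cap K_{\mathcal{C}}=\emptyset$, hence $\partial\mathcal{H}_{c}(\mathcal{C})=\emptyset$ and, $\mathcal{H}_{c}(\mathcal{C})$ being a Choquet simplex, $\mathcal{H}_{c}(\mathcal{C})=\emptyset$. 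If $c=u/Z$ then $\hat Z^{-1}(\{u\})\cap K_{\mathcal{C}}$ is the set of minimizers of $\hat Z$ over $K_{\mathcal{C}}$, and it remains to see that it is a single point, which then gives the singleton $\mathcal{H}_{u/Z}(\mathcal{C})$. The clean case is when $\hat Z$ attains a global minimum, which happens exactly when $0$ lies in the interior of the convex hull of $\{\gamma(1)\mid wg(\gamma)>0\}$ (as in the representation-theoretic setting, where $0$ is the centroid of the Weyl orbit of $\delta$ and the global minimum is $\hat Z(1)=Z$, giving $u/Z=1$ in agreement with Corollary~\ref{corollary}): since $0\in\mathcal{C}$, every critical point of $\hat Z$ already lies in $K_{\mathcal{C}}$, so the global minimum is the unique minimizer over $K_{\mathcal{C}}$ by strict convexity. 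The genuinely delicate situation is when $\hat Z$ has no global minimum; there one must show directly that $\hat Z$ restricted to $K_{\mathcal{C}}$ still has a unique minimizer, exploiting that on $\partial K_{\mathcal{C}}$ the gradient $\nabla\hat Z$ lies in $\partial\mathcal{C}$. This convex-geometric point, together with the Martin boundary statement of step~(i), is what I regard as the heart of a complete proof.
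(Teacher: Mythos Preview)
The statement you are addressing is a \emph{conjecture} in the paper, not a theorem: the authors explicitly present it as an open problem generalizing Corollary~\ref{corollary} (and Raschel's conjecture), and give no proof. So there is no ``paper's own proof'' to compare against.

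Your outline is a faithful transposition of the paper's proof of Corollary~\ref{corollary} to the general setting, and you are right that the entire difficulty is concentrated in step~(i): the identification of $\partial\mathcal{H}_{\infty}(\mathcal{C})$ with the exponential-tilt family indexed by $K_{\mathcal{C}}$. In the paper this is exactly Theorem~\ref{mainresult}, and its proof consumes Sections~4 and~5, relying essentially on the multiplicative structure of $\hat T_{\delta}$ and $\hat T_{\delta}^{+}$, the Kerov--Vershik ring theorem, the plethysm/total-positivity argument, and Littelmann's path model. None of these tools survives for an arbitrary weighted step set $X$ and cone $\mathcal{C}$: the growth graph $\mathcal{G}(\mathcal{C})$ is generally not multiplicative, so Proposition~\ref{multiGraphExtreme} does not apply, and there is no replacement for the integral-closure and Aissen--Edrei--Schoenberg--Whitney steps. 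Your appeal to Ney--Spitzer, Denisov--Wachtel and Ignatiouk-Robert is the natural substitute, but as you yourself note, those results require hypotheses (moment conditions, smoothness of $\partial\mathcal{C}$, or specific dimensions) that are not assumed here; proving that no ``exotic'' extremal harmonic functions exist is precisely the content of the conjecture, not a lemma towards it.

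Two smaller points. In step~(iii), strict convexity of $s\mapsto\hat Z(e^{s})$ needs the affine span of $\{\gamma(1):wg(\gamma)>0\}$ to be all of $\mathbb{R}^{d}$; without this the minimizer set can be an affine subspace, and one must argue that the parametrization by $\vec t$ collapses on it. And even granting that $\hat Z$ has a unique minimizer on $K_{\mathcal{C}}$, concluding that $\mathcal{H}_{u/Z}(\mathcal{C})$ is a \emph{singleton} (not just $\partial\mathcal{H}_{u/Z}(\mathcal{C})$) again uses step~(i) to rule out non-extremal $c$-harmonic measures supported off the level set---in the paper this is handled by the integral representation of Corollary~\ref{measureDescription}, which itself rests on Theorem~\ref{mainresult}.

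In short: your steps~(ii) and~(iii) are the easy parts and mirror the paper's proof of Corollary~\ref{corollary}; step~(i) is not a gap in your argument so much as a restatement of what makes the conjecture open.
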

This conjecture is a generalization of the conjecture of Raschel \cite[Conjecture 1]{R} for two dimensional random walks with bounded increments, which asserts that such a random walk admits a unique harmonic function killed on the boundary of a quarter plane. This special situation can be seen in the above conjecture, in which case the minimum of $\hat{Z}$ is exactly $Z$.
\subsection{The approach of Handelman and Price}\label{explanationProof}
The existence of the homeomorphism of Theorem \ref{mainresult} can also be deduced from the main results of \cite{Han1, Han2}, themselves based on fundamental results of \cite{Pr1,Pr2}. We review here their approach, and the reader could read the aforementioned articles and references therein for a detailed proof. 

Let $n$ denote the dimension of $V(\delta)$, and consider the adjoint representation $\pi:G\longrightarrow GL(M_{n}(\mathbb{C}))$ which is defined by $\pi(g)(M)=u_{\delta}(g)Mu_{\delta}(g)^{-1}$, where $u_{\delta}$ is the irreducible representation associated with $\delta$. Form the infinite tensor product $A:=\bigotimes M_{n}(\mathbb{C})$ as an inductive limit of the sequence of finite-dimensional $C^{*}$-algebras $(M_{n}(\mathbb{C})^{\otimes k})_{k\geq 1}$, where $M_{n}(\mathbb{C})^{\otimes k}$ embeds in $M_{n}(\mathbb{C})^{\otimes k+1}$ with the map $X\mapsto X\otimes \Id_{n}$. We can canonically associate a structure of $C^{*}$-algebra to this inductive limit of $C^{*}$-algebras. Then, $G$ acts continuously on each $M_{n}(\mathbb{C})^{\otimes k}$ and on $A$ with the map $\tilde{\pi}(g):=\bigotimes \pi(g)$ (which means that $g$ acts as $\pi(g)$ on each component of the tensor product), and we can therefore consider the $C^{*}$-algebra $A^{\delta}$ (resp. $A^{\delta}_{k})$ of elements of $A$ (resp. $M_{n}(\mathbb{C})^{\otimes k}$) fixed by $\pi$. The algebra $A^{\delta}$ is the inductive limit of the finite-dimensional $C^{*}$-algebras $(A^{\delta}_{k})_{k\geq 1}$, and the Bratteli diagram of this inductive limit is exactly the growth graph of $\Gamma_{\infty}(\Delta)$. Therefore, the set of central measures on $\Gamma_{\infty}(\Delta)$ is in bijection with the set of traces on $A^{\delta}$.\\
Doing the same construction for the restriction of the representation $\delta$ to the maximal torus $T\subset G$, we get another sequence of finite dimensional $C^{*}$-algebras $(A_{k}^{T})_{k\geq 1}$, whose inductive limit is denoted by $A^{T}$. Similarly, the Bratteli diagram of $A^{T}$ is exactly the growth graph of $\Gamma_{\infty}(\mathbb{R}^{d})$, and the set of central measures on $\Gamma_{\infty}(\mathbb{R}^{d})$ is in bijection with the set of traces on $A^{T}$.\\
Note that we have the natural inclusion of $C^{*}$-algebras $A^{\delta}\subset A^{T}$. The main result of \cite{Han1} is that any extreme trace on $A^{\delta}$ extends to an extreme trace on $A^{T}$. To prove this, the author uses the bijection between the set of traces on an approximately finite $C^{*}$-algebra $A$ and the set of states on its associated dimension group $K_{0}(A)$. Let us quickly explain the nature of $K_{0}(A)$: a dimension group is a group with a notion of positive cone. By considering equivalence classes of projections on the $*$-algebra $\bigoplus_{k\geq 1} M_{k}(A)$, one can canonically associate a dimension group $K_{0}(A)$ to each $C^{*}$-algebra $A$; this dimension group is always a ring in our case. An important fact is that an inclusion of $C^{*}$-algebras induces an inclusion of the associated dimension groups, and therefore the problem reduces to extend any state on $K_{0}(A^{\delta})$ to a state on $K_{0}(A^{T})$. Handelman managed to prove this in \cite{Han1}, and the main ingredient of the proof is the non-trivial property that $K_{0}(A^{T})$ is a finitely generated $K_{0}(A^{\delta})$-module.\\
Once proven that any trace on $A^{\delta}$ extends to a trace on $A^{T}$, the problem amounts to describe the set of traces on $A^{T}$. In \cite{Han2}, the author achieves this by proving that the set of faithful traces on $A^{T}$ is in bijection with the interior of $K(\delta)$.
Then, the identification of the set of faithful traces on $A^{\delta}$ with the interior of $K(\delta)^{+}$ is done thanks to a result of \cite{Pr2}, which asserts that the Weyl group $W$ acts transitively on the set of traces extending a particular faithful trace on $A^{T}$. Finally, the case of non-faithful traces is done by considering parabolic subgroups of $G$. 
\subsection{The extended algebra of characters}

Our proof of Theorem \ref{mainresult} will mainly use algebraic properties of the
representations of the Lie algebra $\mathfrak{g}$. We define the extended
algebra of characters $\hat{\mathcal{A}}_{\delta}$ as follows:

\begin{itemize}
\item $\hat{\mathcal{A}}_{\delta}$ is isomorphic to $\mathcal{A}_{\delta
}\times\mathbb{R}[T]$ as a vector space; for $x\in A_{\delta}$, we simply
denote by $(x,n)$ the element $(x,T^{n})$. A basis of $\hat{\mathcal{A}%
}_{\delta}$ is given by the set $\mathcal{B}=\{(s_{\lambda},n)\}_{n\geq
0,\lambda\in T_{\delta}^{+}}$.

\item The multiplicative structure of $\hat{\mathcal{A}}_{\delta}$ is defined
on $\mathcal{B}$ with the product
\[
(s_{\lambda},n)\times(s_{\mu},m)=(s_{\lambda}s_{\mu},n+m).
\]

\end{itemize}

We denote by $\hat{T}^{+}_{\delta}$ the subalgebra of $\hat{\mathcal{A}%
}_{\delta}$ spanned by the set $\lbrace(s_{\lambda},n)\vert V(\lambda)\in
V(\delta)^{\otimes n}\rbrace$. Here $V(\lambda)\in V(\delta)^{\otimes n}$ means that $V(\lambda)$ is an irreducible component of $V(\delta)^{\otimes n}$. Likewise, we define the extended algebra of
weights $\hat{P}_{\delta}$ as follows

\begin{itemize}
\item $\hat{P}_{\delta}$ is isomorphic to $\mathbb{R}[P_{\delta}]\times
\mathbb{R}[T]$ as a vector space. A basis of $\hat{P}_{\delta}$ is
given by the set $\mathcal{P}=\lbrace(e^{\gamma},n)\vert n\geq0,\gamma
\in P_{\delta} \rbrace$.

\item The multiplicative structure of $\hat{P}_{\delta}$ is defined on
$\mathcal{P}$ with the product
\[
(e^{\gamma},n)\times(e^{\gamma^{\prime}},m)=(e^{\gamma+\gamma^{\prime}},n+m).
\]

\end{itemize}

We denote by $\hat{T}_{\delta}$ the subalgebra of $\hat{P}_{\delta}$ spanned
by the elements $\lbrace(e^{\gamma},n)\vert n\geq1, K_{\delta^{\otimes
n},\gamma}>0\rbrace$. It is readily seen that $\hat{T}_{\delta}$ is the subalgebra of
$\hat{P}_{\delta}$ generated by $\lbrace(e^{\gamma},1)\vert\gamma\in
\Pi_{\delta}\rbrace$. Note that the inclusion $\mathcal{A}_{\delta
}\subset\mathbb{R}[P_{\delta}]$ translates naturally into the inclusion
$\hat{\mathcal{A}}_{\delta}\subset\hat{P}_{\delta}$ and $\hat{T}^{+}_{\delta
}\subset\hat{T}_{\delta}$. We can write the multiset of weights of $\delta$ in
$\hat{T}_{\delta}$ as $\Pi_{\delta}=\{(e^{\gamma_{1}},1),\ldots,(e^{\gamma
_{N}},1)\}$ where each weight appears a number of times equal to its
multiplicity. For any $k=0,\ldots,N$, let $e_{k}(X_{1},\ldots,X_{N})$ be the
$k$-th elementary symmetric function in the variables $X_{1},\ldots,X_{N}$.
Define the polynomial $\Phi(X)\in\hat{T}_{\delta}[X]$ by
\[
\Phi(X)=\prod_{\gamma\in\Pi_{\delta}}(X+(e^{\gamma},1)).
\]

\begin{proposition}
\label{Prop_DecFI(T)}We have
\begin{equation}
\Phi(X)=\sum_{k=0}^{N}(e_{k}(e^{\gamma_{1}},\ldots,e^{\gamma_{N}}),k)X^{N-k}
\label{dec-Fi(T)}%
\end{equation}
and for any $k=0,\ldots,N,$ the expression $(e_{k}(e^{\gamma_{1}}%
,\ldots,e^{\gamma_{N}}),k)$ decomposes as a sum of elements $(s_{\lambda
},n)\in\hat{T}_{\delta}^{+}$ with positive integer coefficients. In
particular, we have $\Phi(X)\in\hat{T}_{\delta}^{+}[X]$.
\end{proposition}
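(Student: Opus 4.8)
The plan is to verify the two assertions of Proposition~\ref{Prop_DecFI(T)} separately. The first identity \eqref{dec-Fi(T)} is essentially the definition of the elementary symmetric functions applied in the algebra $\hat{T}_\delta$: expanding $\Phi(X) = \prod_{j=1}^{N}(X + (e^{\gamma_j},1))$ and collecting the coefficient of $X^{N-k}$, one gets a sum over $k$-element subsets $\{j_1 < \cdots < j_k\}$ of the product $(e^{\gamma_{j_1}},1)\cdots(e^{\gamma_{j_k}},1)$, which by the multiplicative rule $(e^{\gamma},n)(e^{\gamma'},m) = (e^{\gamma+\gamma'},n+m)$ equals $(e^{\gamma_{j_1}+\cdots+\gamma_{j_k}},k)$. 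Summing over all such subsets gives exactly $(e_k(e^{\gamma_1},\ldots,e^{\gamma_N}),k)$, the first component being the image of the usual $k$-th elementary symmetric polynomial under $X_j \mapsto e^{\gamma_j}$. This step is purely formal.

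The substance is the second assertion: for each $k$, the element $(e_k(e^{\gamma_1},\ldots,e^{\gamma_N}),k)$ lies in $\hat{T}_\delta^+$ and, more precisely, expands with positive integer coefficients on the basis $\{(s_\lambda,n)\}$ with $V(\lambda) \in V(\delta)^{\otimes n}$. First I would reduce this to a statement about symmetric functions of the weight multiset of $V(\delta)$, i.e. to showing that $e_k(\Pi_\delta)$ — where $\Pi_\delta$ is the multiset of weights of $V(\delta)$, so that $\sum_j e^{\gamma_j} = s_\delta$ — lies in $\mathcal{A}_\delta$, the subalgebra of $\mathbb{R}^W[P]$ generated by the Weyl characters $s_\lambda$ with $\lambda \in T_\delta^+$, and in fact expands with nonnegative integer coefficients on the characters $s_\lambda$ with $V(\lambda) \in V(\delta)^{\otimes k}$. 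The key representation-theoretic input is that $e_k(\Pi_\delta)$ is (up to the $W$-invariance one must check) the character of the $k$-th exterior power $\bigwedge^k V(\delta)$: indeed, if $V(\delta)$ has weights $\gamma_1,\ldots,\gamma_N$ with multiplicity, then $\operatorname{ch}\bigwedge^k V(\delta) = e_k(e^{\gamma_1},\ldots,e^{\gamma_N})$. Since $\bigwedge^k V(\delta)$ is a genuine finite-dimensional representation of $\mathfrak{g}$, its character is $W$-invariant and decomposes as $\sum_\lambda m_\lambda s_\lambda$ with $m_\lambda \in \mathbb{Z}_{\geq 0}$ the multiplicities of the irreducible constituents; and because $\bigwedge^k V(\delta)$ is a subrepresentation of $V(\delta)^{\otimes k}$, every such $V(\lambda)$ satisfies $V(\lambda) \in V(\delta)^{\otimes k}$, hence $\lambda \in T_\delta^+$ and $(s_\lambda,k) \in \hat{T}_\delta^+$. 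Tracking the second coordinate, $(e_k(e^{\gamma_1},\ldots,e^{\gamma_N}),k) = \sum_\lambda m_\lambda (s_\lambda,k)$ is the desired decomposition in $\hat{T}_\delta^+$. The final clause $\Phi(X) \in \hat{T}_\delta^+[X]$ is then immediate since all coefficients lie in $\hat{T}_\delta^+$.

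The main obstacle is the bookkeeping of the second coordinate, i.e.\ making sure the "grading" index $k$ attached to $e_k$ matches the tensor-power degree in which the relevant irreducibles appear. Here one must be careful that the identification of $e_k$ with the $\bigwedge^k$-character is what forces the appearance in $V(\delta)^{\otimes k}$ rather than in some other tensor power; the containment $\bigwedge^k V(\delta) \hookrightarrow V(\delta)^{\otimes k}$ is exactly the bridge, and I would state it explicitly. A secondary point worth a line is positivity of the integer coefficients: it comes for free from $\bigwedge^k V(\delta)$ being an honest representation, so no cancellation can occur. No deep input beyond complete reducibility and the exterior-power character formula is needed.
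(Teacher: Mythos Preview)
Your proof is correct and follows essentially the same approach as the paper: identify $e_k(e^{\gamma_1},\ldots,e^{\gamma_N})$ as the character of $\bigwedge^k V(\delta)$, then use that $\bigwedge^k V(\delta)$ is a submodule of $V(\delta)^{\otimes k}$ to conclude that its character decomposes with nonnegative integer coefficients on the $s_\lambda$ with $V(\lambda)\in V(\delta)^{\otimes k}$. Your write-up is more explicit about the formal expansion \eqref{dec-Fi(T)} and the bookkeeping of the second coordinate, but the substance is identical.
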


\begin{proof}
The expression $e_{k}(e^{\gamma_{1}},\ldots,e^{\gamma_{N}})$ is the plethysm
of the elementary symmetric function $e_{k}$ by $s_{\delta}$. This means that
\[
e_{k}(e^{\gamma_{1}},\ldots,e^{\gamma_{N}})=\mathrm{char}\left(
\bigwedge\limits^{k}V(\delta)\right)
\]
is the character of the $k$-th exterior power of the representation
$V(\delta)$.\ Since ${\textstyle\bigwedge\limits^{k}} V(\delta)$ is a
submodule of $V(\delta)^{\otimes k}$, its character indeed decomposes as a sum
of characters in $\{s_{\lambda}\mid V(\lambda)\in V(\delta)^{\otimes k}\}$ with
positive integer coefficients.
\end{proof}

\begin{corollary}
\label{integralClosure} $\hat{T}_{\delta}^{+}$ is integrally closed in
$\hat{T}_{\delta}$.
\end{corollary}

\begin{proof}
Let $\overline{\hat{T}_{\delta}^{+}}$ denote the integral closure of $\hat
{T}_{\delta}^{+}$ in $\hat{T}_{\delta}$. We have $\overline{\hat{T}%
^{+}_{\delta}}\subset\hat{T}_{\delta}$ by definition. Conversely, since
$\overline{\hat{T}^{+}_{\delta}}$ is a ring and $\hat{T}_{\delta}$ is
generated by the monomials $(e^{\gamma},1)$ with $\gamma\in\Pi_{\delta}$, it
suffices to prove that each such $(e^{\gamma},1)$ belongs to $\overline
{\hat{T}_{\delta}^{+}}$.\ But $-(e^{\gamma},1)$ is a root of $\Phi(X)$ which
is, by the previous proposition, a monic polynomial with coefficients in
$\overline{\hat{T}_{\delta}^{+}}$.\ Therefore $-(e^{\gamma},1)$ and
$(e^{\gamma},1)$ are integers over $\hat{T}^{+}_{\delta}$ and thus belong to
$\overline{\hat{T}_{\delta}^{+}}$.
\end{proof}

\section{Minimal boundary of $\Gamma_{\infty}(\mathbb{R}^{d})$}

\subsection{Algebraic description of the growth graph}

Let $\mathcal{G}(\mathbb{R}^{d})$ be the growth graph of $\Gamma
(\mathbb{R}^{d})$ and $\mathcal{G}(\Delta)$ be the one of $\Gamma
(\Delta)$. Namely, the set $\Lambda_{n}$ of vertices of rank $n$ of the
graph $\mathcal{G}(\mathbb{R}^{d})$ are pairs $(\gamma,n)$ where $\gamma$
is a weight of $P_{\delta}$ such that $\Gamma_{\mathbb{R}^{d}}(\gamma
,n)\not =\emptyset$, and the weight of the edge between $(\gamma,n)$ and
$(\gamma^{\prime},n+1)$ is $e((\gamma,n),(\gamma^{\prime},n+1))=\#\Gamma
_{\gamma,\gamma^{\prime}}(\mathbb{R}^{d})$. From the graph embedding of
Section $1$, the set of extreme central measures on $\Gamma_{\infty}(\mathbb{R}^{d})$ is
in bijection with the set of extreme points of the convex set $\partial
\mathcal{H}(\mathcal{G}(\mathbb{R}^{d}))$ of nonnegative functions $p:\coprod_{n\geq
0}\Lambda_{n}\longrightarrow\mathbb{R}^{+}$ with $p(0,0)=1$ and $p(\lambda
,n)=\sum_{\lambda\nearrow\mu}e(\gamma,\mu)p(\mu,n+1)$, and the same holds for
$\mathcal{G}(\Delta)$. An important feature of $\mathcal{G}%
(\mathbb{R}^{d})$ is that this graded graph is multiplicative: it is
related to the algebra $\hat{T}_{\delta}$ as follows.

\begin{proposition}\label{multiplicativeGraphDescription}
$\mathcal{G}(\mathbb{R}^{d})$ is a multiplicative graph associated with the algebra $\hat{P}_{\delta}$ with the injective map
$$i:\left\lbrace \begin{matrix}
\coprod_{n\geq 0}\Lambda_{n}&\longrightarrow&\hat{P}_{\delta}\\
(\gamma,n)&\mapsto& (e^{\gamma},n),n\geq1\\
*&\mapsto& (s_{\delta},1)
\end{matrix}\right.,$$
and $(\hat{P}_{\delta})_{\mathcal{G}(\mathbb{R}^{d})}=\hat{T}_{\delta}$. In particular, $\partial \mathcal{H}_{\infty}(\mathcal{G}(\mathbb{R}^{d}))$ is homeomorphic to $\Mult(\hat{T}_{\delta})^{+}$ through the map
$$i^{*}:\left\lbrace \begin{matrix}
\Mult(\hat{T}_{\delta})^{+}&\longrightarrow&\partial \mathcal{H}_{\infty}(\mathcal{G}(\mathbb{R}^{d}))\\
f&\mapsto& f\circ i\\
\end{matrix}\right.$$
\end{proposition}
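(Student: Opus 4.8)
The plan is to verify the three assertions of Proposition~\ref{multiplicativeGraphDescription} in turn, using the combinatorial identities supplied by Littelmann's path model and then invoking Proposition~\ref{multiGraphExtreme}. First I would check that $i$ is injective and that it realizes the multiplicative structure: the vertices $(\gamma,n)$ with $n\geq 1$ go to the linearly independent basis vectors $(e^{\gamma},n)$ of $\hat P_\delta$, and $*$ goes to $(s_\delta,1)$, so injectivity is clear once we note $*$ is not identified with any $(\gamma,1)$. The multiplicativity relation to verify is
\[
i(\gamma,n)\,i(*)=\sum_{(\gamma,n)\nearrow(\gamma',n+1)}e\bigl((\gamma,n),(\gamma',n+1)\bigr)\,i(\gamma',n+1).
\]
The left-hand side is $(e^\gamma,n)(s_\delta,1)=(e^\gamma s_\delta,n+1)$, and since $s_\delta=\sum_{\mu\in\Pi_\delta}e^\mu$ counted with multiplicity, this equals $\sum_{\mu}(e^{\gamma+\mu},n+1)$. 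On the other side, by the defining properties of $B(\delta)$ the edge weight $e((\gamma,n),(\gamma',n+1))=\#\Gamma_{\gamma,\gamma'}(\mathbb R^d)$ counts infinitesimal Littelmann paths from $\gamma$ to $\gamma'$, which is precisely the multiplicity of $\gamma'-\gamma$ as a weight of $V(\delta)$; summing over $\gamma'$ reproduces exactly the same sum. So the two sides agree.

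Next I would identify the generated subalgebra $(\hat P_\delta)_{\mathcal G(\mathbb R^d)}$. By definition this is the unital subalgebra generated by the positive cone spanned by $i(\mathcal G(\mathbb R^d))$, i.e.\ by $(s_\delta,1)$ together with all $(e^\gamma,n)$ for which $\Gamma_{\mathbb R^d}(\gamma,n)\neq\emptyset$. The latter condition says $\gamma$ is a weight of $V(\delta)^{\otimes n}$, i.e.\ $K_{\delta^{\otimes n},\gamma}>0$, so the cone spans exactly the span of $\{(e^\gamma,n)\mid n\geq 1,\ K_{\delta^{\otimes n},\gamma}>0\}$, which is by definition $\hat T_\delta$ (one should note that $(s_\delta,1)$ itself lies in this span). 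Conversely the remark preceding Proposition~\ref{Prop_DecFI(T)} says $\hat T_\delta$ is generated as an algebra by $\{(e^\gamma,1)\mid\gamma\in\Pi_\delta\}$, and each such generator is among the vertices of rank $1$, so $\hat T_\delta\subseteq(\hat P_\delta)_{\mathcal G(\mathbb R^d)}$. Hence the two algebras coincide.

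Finally, to apply Proposition~\ref{multiGraphExtreme} I need the hypothesis $K.K\subset K$, where $K$ is the cone spanned by $i(\mathcal G(\mathbb R^d))$. This is exactly the statement that a product of two basis elements $(e^\gamma,n)$, $(e^{\gamma'},n')$ with $\gamma,\gamma'$ weights of appropriate tensor powers is again a nonnegative combination of such elements: $(e^\gamma,n)(e^{\gamma'},n')=(e^{\gamma+\gamma'},n+n')$, and $\gamma+\gamma'$ is a weight of $V(\delta)^{\otimes(n+n')}$ whenever $\gamma\in\Pi_{\delta^{\otimes n}}$ and $\gamma'\in\Pi_{\delta^{\otimes n'}}$ (tensor the corresponding weight vectors); products involving $(s_\delta,1)$ are handled the same way. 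With $K.K\subset K$ established, Proposition~\ref{multiGraphExtreme} applies verbatim and gives the homeomorphism $i^{*}\colon\Mult^{+}(\hat T_\delta)\to\partial\mathcal H_\infty(\mathcal G(\mathbb R^d))$, and one matches the notation by observing that $\partial\mathcal H(\mathcal G(\mathbb R^d))=\partial\mathcal H_\infty(\mathcal G(\mathbb R^d))$ under the graph embedding of Section~\ref{Sec_generalities}. I expect the only delicate point to be the bookkeeping of the identification of the root vertex $*$ with $(s_\delta,1)$ rather than with a weight vertex, and making sure this is consistent with the edge-weight normalization at rank $1$ (where $e(*,(\lambda,1))=\#\Gamma_{0,\lambda}(\mathbb R^d)$ is the multiplicity of $\lambda$ in $V(\delta)$, matching the expansion of $(s_\delta,1)$); everything else is routine translation between Littelmann's combinatorics and the algebra $\hat P_\delta$.
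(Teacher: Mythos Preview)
Your proposal is correct and follows essentially the same approach as the paper's proof: both verify the multiplicativity relation by expanding $(e^\gamma,n)(s_\delta,1)$ using $s_\delta=\sum_{\kappa}K_{\delta,\kappa}e^\kappa$ and matching coefficients with the edge weights $K_{\delta,\gamma'-\gamma}$, then identify the generated subalgebra with $\hat T_\delta$ and invoke Proposition~\ref{multiGraphExtreme}. You are simply more explicit than the paper about injectivity of $i$, about the inclusion $\hat T_\delta\subseteq(\hat P_\delta)_{\mathcal G(\mathbb R^d)}$, and about the hypothesis $K.K\subset K$, all of which the paper leaves implicit.
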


\begin{proof}
Since $e((\gamma,n),(\gamma^{\prime},n+1))=\#\Gamma_{\mathbb{R}^{d}}(\gamma,\gamma^{\prime
})=K_{\delta,\gamma^{\prime}-\gamma}$, the following equality holds for
$(\gamma,n)\in\Lambda_{n}$:
\begin{align*}
i(\gamma,n)i(\ast)=  &  (e^{\gamma},n)(\sum_{\kappa\in\Pi_{\delta}}%
K_{\delta,\kappa}e^{\gamma^{\prime}},1)=\sum_{\kappa\in\Pi_{\delta}}%
K_{\delta,\kappa}(e^{\gamma+\kappa},n+1)\\
=  &  \sum_{\substack{\gamma^{\prime}\in P_{\delta},\gamma^{\prime}-\gamma
\in\Pi_{\delta}}}K_{\delta,\gamma^{\prime}-\gamma}(e^{\gamma^{\prime}},n+1)\\
=  &  \sum_{\gamma^{\prime}\in P_{\delta}}e((\gamma,n),(\gamma^{\prime
},n+1))i(\gamma^{\prime},n+1).
\end{align*}
Thus, $\mathcal{G}(\mathbb{R}^{d})$ is a multiplicative graph associated
with $\hat{P}_{\delta}$ through the map $i$. Note that by construction, the
sub-algebra of $\hat{P}_{\delta}$ generated by the elements $\lbrace
i(\gamma,n)\rbrace_{(\gamma,n)\in\mathcal{G}(\mathbb{R}^{d})}$ is
precisely $\hat{T}_{\delta}$: the last part of the proposition is deduced from
Proposition \ref{multiGraphExtreme}.
\end{proof}

\subsection{Characterization of the multiplicative maps on $\hat{T}_{\delta}$}\label{CharacMult}

The set of extreme central measures on $\mathcal{G}(\mathbb{R}^{d})$ is thus
given by the set of positive morphisms from $\hat{T}_{\delta}$ to $\mathbb{R}$
which take the value $1$ on $(s_{\delta},1)$. We will prove in this
subsection the following result:

\begin{proposition}
\label{algebraicDescription} Let $f\in \Mult(\hat{T}_{\delta})^{+}$. There
exists a multiplicative map $\phi:\mathbb{R}[Q^{+}]\longrightarrow
\mathbb{R}^{+}$ and an element $w\in W$ such that
\[
f(e^{\gamma},n)=\frac{1}{\phi(S_{\delta})^{n}}\phi(e^{n\delta-w(\gamma)}),
\]
for all $(e^{\gamma},n)\in\hat{T}_{\delta}$.
\end{proposition}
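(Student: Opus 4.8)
The plan is to analyze a positive multiplicative map $f\in\Mult(\hat{T}_\delta)^+$ by first understanding its restriction to the elements $(e^\gamma,1)$ with $\gamma\in\Pi_\delta$, since these generate $\hat{T}_\delta$ as an algebra. Set $a_\gamma=f(e^\gamma,1)$. Because $f$ is multiplicative and nonnegative on the cone $K$ spanned by the $i(\lambda,n)$, and in particular the positivity statement of Proposition \ref{Prop_DecFI(T)} says each $(e_k(e^{\gamma_1},\dots,e^{\gamma_N}),k)$ lies in $\hat{T}_\delta^+$, applying $f$ to the factorization $\Phi(X)=\prod_{\gamma\in\Pi_\delta}(X+(e^\gamma,1))$ shows that the polynomial $\prod_\gamma (X+a_\gamma)$ has all coefficients (after the appropriate normalization by powers of $f(s_\delta,1)$) nonnegative, hence all the $a_\gamma\geq 0$. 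So $f$ restricted to $\hat{T}_\delta$ is determined by a point $(a_\gamma)_{\gamma\in\Pi_\delta}$ in the nonnegative orthant. The constraint $f(s_\delta,1)=1$ together with multiplicativity then pins down the normalization.

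Next I would exploit the fact that the vector $(a_\gamma)_{\gamma\in\Pi_\delta}$ cannot be arbitrary: $\hat{T}_\delta$ sits inside $\hat{P}_\delta$, and the Weyl-invariance of the characters $s_\lambda$ generating $\hat{T}_\delta^+$ forces a compatibility. Concretely, the idea is to show that the assignment $\gamma\mapsto a_\gamma$ must be of the ``exponential'' form $a_\gamma=c\cdot\vec{t}^{\,\gamma}$ for suitable $\vec{t}\in(\mathbb{R}^+)^d$ and $c>0$, possibly after twisting by a Weyl group element $w$. The mechanism: consider ratios $a_\gamma/a_{\gamma'}$ for weights differing by a simple root, and use that $f$ is multiplicative on products of characters $s_\lambda$; the relations among characters (e.g. the fact that $s_\delta s_\mu$ expands in a way that links weights along root strings) constrain these ratios to behave multiplicatively in the weight lattice $P_\delta$. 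This gives a multiplicative map $\phi$ on $\mathbb{R}[Q^+]$ by the rule $\phi(e^{-\alpha_i})=t_i$ for appropriate $t_i$, and then $a_\gamma = c\,\phi(e^{\gamma-\gamma_0})$ for a reference extremal weight $\gamma_0$. Taking $\gamma_0=\delta$ up to a Weyl element $w$ (so that $w(\gamma)\leq\delta$, i.e. $\delta-w(\gamma)\in Q^+$) makes the exponent land in $Q^+$, giving the stated formula $f(e^\gamma,n)=\phi(e^{n\delta-w(\gamma)})/\phi(S_\delta)^n$ once one checks $f(s_\delta,1)=\phi(S_\delta)$, which follows from $s_\delta=\sum_\gamma K_{\delta,\gamma}e^\gamma$ and $S_\delta=e^{-\delta}s_\delta$.

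The main obstacle I expect is precisely the step forcing the multiplicative (exponential) structure on $\gamma\mapsto a_\gamma$: a priori a positive point in the orthant indexed by $\Pi_\delta$ has many more degrees of freedom than a point of $(\mathbb{R}^+)^d\times W$, so the bulk of the work is showing that the Weyl-symmetry constraints built into $\hat{T}_\delta^+$ (multiplicativity of $f$ on the subring generated by genuine characters $s_\lambda$, not just on $\hat{P}_\delta$) collapse these freedoms. One clean way to organize this: pass to the ``boundary behavior'' — if some $a_\gamma=0$ then multiplicativity and the character relations force a whole parabolic face of the weights to vanish, reducing to a lower-rank situation handled by induction; if all $a_\gamma>0$, then taking logarithms $b_\gamma=\log a_\gamma$ one must show $\gamma\mapsto b_\gamma$ is (the restriction of) an affine function on the weight space modulo a Weyl twist, which is where Weyl-invariance of $s_\lambda$ enters decisively, e.g. by comparing $f$ on $s_\delta$ evaluated via two different weight-space chambers. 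The extraction of the Weyl element $w$ — needed because the positive cone only ``sees'' dominant directions — is the final subtlety; it arises from choosing which chamber the putative affine functional $b$ is dominant with respect to.
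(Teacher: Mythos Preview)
Your outline has the right shape but two concrete problems, one minor and one central.

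First, the minor one: your argument that $a_\gamma\ge 0$ via the polynomial $\Phi(X)$ is both unnecessary and incorrect. A real polynomial with nonnegative coefficients need not have only real nonpositive roots (e.g.\ $X^2+X+1$), so that deduction fails. But you do not need it: by definition $f\in\Mult(\hat T_\delta)^+$ is nonnegative on the cone generated by $i(\mathcal G(\mathbb R^d))$, and that cone is spanned precisely by the $(e^\gamma,n)$; so $a_\gamma=f(e^\gamma,1)\ge 0$ is immediate.

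The central gap is that you misidentify where the exponential constraint on $\gamma\mapsto a_\gamma$ comes from, and consequently you lack the mechanism that actually makes the argument work in the degenerate case. You repeatedly invoke ``Weyl-invariance of $s_\lambda$'' and ``relations among characters'' as the source of the constraint, but none of that is needed: $f$ is multiplicative on all of $\hat T_\delta$, which is generated by the $(e^\gamma,1)$, so directly
\[
a_\gamma\,a_{\gamma'-\alpha}=f(e^{\gamma+\gamma'-\alpha},2)=a_{\gamma-\alpha}\,a_{\gamma'}
\]
whenever all four weights lie in $\Pi_\delta$. This cross-ratio identity is the whole story. The real difficulty is then: which $a_\gamma$ are nonzero, and in particular why can one anchor the induction at $a_\delta\neq 0$? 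Your proposal (``a whole parabolic face vanishes, reduce to lower rank'') is vague and not how the paper proceeds. The paper's key move is to introduce the mean vector $M_f=\sum_\gamma K_{\delta,\gamma}a_\gamma\,\gamma$ and choose $w\in W$ so that $w(M_f)\in\Delta$; replacing $f$ by $f\circ w^{-1}$ one may assume $M_f\in\Delta$. Under this hypothesis one proves the decisive Lemma~\ref{stabilityNonzero}: if $\gamma,\gamma-\alpha\in\Pi_\delta$ and $a_\gamma=0$ then $a_{\gamma-\alpha}=0$, and in particular $a_\delta\neq 0$. This is what lets one define $\phi(e^\alpha)$ consistently via the cross-ratios and run the induction on the root order down from $\delta$. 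Your scheme for choosing $w$ (``which chamber the affine functional $b$ is dominant with respect to'') is circular, since the affine structure of $b=\log a$ is exactly what you are trying to establish and is not even defined when some $a_\gamma=0$.
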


Note that the element $\phi(e^{n\delta-w(\gamma)})$ is well-defined: indeed,
if $(e^{\gamma},n)\in\hat{T}_{\delta}$, then the weight $\gamma$ appears in
the representation $V(\delta)^{\otimes n}$ and $w(\gamma)$ is thus smaller
than $n\delta$ with respect to the roots order relative to the set of simple
roots $S$. Therefore, $n\delta-w(\gamma)\in Q^{+}$.

Let $f$ be a multiplicative map on $\hat{T}_{\delta}$. Since $f$ is
multiplicative and $\hat{T}_{\delta}$ is generated by the set $\tilde{\Pi
}_{\delta}:=\{(e^{\gamma},1),\gamma\in\Pi_{\delta}\}$, $f$ is completely
determined by its value on $\tilde{\Pi}_{\delta}$. We suppose from now on that
$f\in \Mult(\hat{T}_{\delta})^{+}$. Let $M_{f}=\sum_{\gamma\in\Pi_{\delta}%
}K_{\delta,\gamma}f(\gamma,1)\gamma$: $M_{f}$ belongs to $\mathbb{R}^{d}$,
thus there exists $w\in W$ such that $w(M_{f})\in\Delta$. Replacing $f$ by $f\circ
w^{-1}$ gives another multiplicative map on $\hat{T}_{\delta}$ such that
$M_{f\circ w^{-1}}=\sum_{\gamma\in\Pi_{\delta}}K_{\delta,\gamma}(f\circ
w^{-1})(e^{\gamma},1)\gamma\in\Delta$ and such that $f$ can be expressed from
$f\circ w^{-1}$ with the formula $f=(f\circ w^{-1})\circ w$.

\begin{lemma}
\label{stabilityNonzero} Assume that $M_{f}\in\Delta$ and let $\alpha\in S$. 
For all $\gamma\in\Pi_{\delta}$ such that $\gamma-\alpha\in\Pi_{\delta}$,
\[
f(e^{\gamma},1)=0\Longrightarrow f(e^{\gamma-\alpha},1)=0.
\]
In particular, $f(e^{\delta},1)\not =0$.
\end{lemma}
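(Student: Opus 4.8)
The plan is to extract a single algebraic relation inside $\hat T_\delta$ that links the generators $(e^\gamma,1)$ and $(e^{\gamma-\alpha},1)$ in a way that cannot survive under a nonnegative multiplicative map unless the implication holds. The natural source of such relations is the $\mathfrak{sl}_2$-theory attached to the simple root $\alpha$: the $\alpha$-string through a weight $\gamma$ of $V(\delta)$. Concretely, I would restrict $V(\delta)$ to the $\mathfrak{sl}_2$-copy $\mathfrak{g}_\alpha$, decompose it into irreducible $\mathfrak{sl}_2$-modules, and read off, for each string, the combinatorial identity expressing how the weights $\gamma,\gamma-\alpha,\gamma-2\alpha,\dots$ occur. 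The key point is that if $\gamma$ and $\gamma-\alpha$ both lie in $\Pi_\delta$, then $\gamma$ is \emph{not} the lowest weight of its $\alpha$-string, so $\langle\gamma,\alpha^\vee\rangle>0$, and hence $\langle\gamma-\alpha,\alpha^\vee\rangle\geq\langle\gamma,\alpha^\vee\rangle-2$ may still be such that $\gamma-\alpha$ can be pushed \emph{down}; but more usefully, $s_\alpha$ permutes the weights of each string, so $\gamma$ and $\gamma'=s_\alpha(\gamma)$ are both weights, with $\gamma'=\gamma-\langle\gamma,\alpha^\vee\rangle\alpha$ obtained by repeatedly subtracting $\alpha$.

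The mechanism I would exploit is the following. Consider the product $(e^{\gamma},1)\cdot(e^{-\alpha+\text{something}},\cdot)$ — but $e^{-\alpha}$ is not in $\hat T_\delta$. Instead I would use the hypothesis $M_f\in\Delta$ directly. Write $M_f=\sum_{\gamma\in\Pi_\delta}K_{\delta,\gamma}f(e^\gamma,1)\gamma$. Since $M_f\in\Delta$, all its coordinates $\langle M_f,\alpha^\vee\rangle$ on the simple coroots are $\geq 0$. Now pair $M_f$ against $\omega_\alpha^\vee$ or simply analyze $\langle M_f,\alpha^\vee\rangle$: the weights of $V(\delta)$ come in $\alpha$-strings that are symmetric about the hyperplane $\langle\cdot,\alpha^\vee\rangle=0$, and $f(e^\gamma,1)\geq 0$. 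If $f(e^\gamma,1)=0$ for some $\gamma$ with $\gamma-\alpha\in\Pi_\delta$ but $f(e^{\gamma-\alpha},1)\neq 0$, I would show this forces a specific nonzero positive contribution to $\langle M_f,\alpha^\vee\rangle$ of the wrong sign, contradicting $M_f\in\Delta$ — unless I also use multiplicativity to relate $f(e^{\gamma},1)$ and $f(e^{\gamma-\alpha},1)$ via a monomial identity. The cleanest route: there exist weights $\beta,\beta'\in\Pi_\delta$ with $\beta+\gamma-\alpha=\beta'+\gamma$ as elements of $P_\delta$ (take $\beta'=\beta-\alpha$ whenever both $\beta,\beta-\alpha\in\Pi_\delta$), giving in $\hat T_\delta$ the relation $(e^\beta,1)(e^{\gamma-\alpha},1)=(e^{\beta-\alpha},1)(e^\gamma,1)$ whenever $\beta,\beta-\alpha\in\Pi_\delta$; applying $f$ and using $f(e^\gamma,1)=0$ yields $f(e^\beta,1)f(e^{\gamma-\alpha},1)=0$, so either $f(e^{\gamma-\alpha},1)=0$ (done) or $f(e^\beta,1)=0$ for \emph{every} $\beta$ admitting an $\alpha$-predecessor. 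Iterating, $f$ vanishes on all of $\Pi_\delta$ except the bottoms of $\alpha$-strings; summing against $\alpha^\vee$ then makes $\langle M_f,\alpha^\vee\rangle<0$ (each surviving weight $\beta$ has $\langle\beta,\alpha^\vee\rangle<0$, with at least one strictly, since the top $\delta$ itself gets killed when $\langle\delta,\alpha^\vee\rangle>0$), contradicting $M_f\in\Delta$. This also yields the final assertion $f(e^\delta,1)\neq 0$: if $f(e^\delta,1)=0$, run the same argument with $\gamma=\delta$ (using any simple $\alpha$ with $\delta-\alpha\in\Pi_\delta$) to propagate vanishing down and again violate $M_f\in\Delta$, or directly observe $f\not\equiv 0$ forces some extremal weight to survive.

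\textbf{Main obstacle.} The delicate point is the propagation/iteration step: I must be careful that the monomial relations $(e^\beta,1)(e^{\gamma-\alpha},1)=(e^{\beta-\alpha},1)(e^\gamma,1)$ are genuinely available (i.e. that $\beta-\alpha\in\Pi_\delta$ for enough $\beta$) and that the induction on the $\alpha$-string height actually reaches the top weight $\delta$, so that killing $f$ on non-bottom weights is truly incompatible with $M_f\in\Delta$. Handling the case $\langle\delta,\alpha^\vee\rangle=0$ (where $\delta$ is itself fixed by $s_\alpha$ and the string through $\delta$ is trivial) requires separating it out — there one argues on a different weight whose $\alpha$-string is nontrivial, or notes the statement is about $\gamma$ with $\gamma-\alpha\in\Pi_\delta$ which already excludes such $\gamma=\delta$. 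Making the sign bookkeeping in $\langle M_f,\alpha^\vee\rangle$ rigorous — matching each surviving weight with the contribution it would have had — is the computational heart, but it is elementary $\mathfrak{sl}_2$ weight combinatorics once the algebraic relations are in hand.
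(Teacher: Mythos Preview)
Your approach is essentially the paper's: the monomial identity $(e^\beta,1)(e^{\gamma-\alpha},1)=(e^{\beta-\alpha},1)(e^\gamma,1)$ in $\hat T_\delta$, combined with $\langle M_f,\alpha\rangle\geq 0$ and the $\alpha$-string structure, is exactly what the paper uses. Your ``main obstacle'' is a non-issue --- no iteration is needed, since once $f(e^\gamma,1)=0$ and $f(e^{\gamma-\alpha},1)\neq 0$ the relation kills $f(e^\beta,1)$ for \emph{every} $\beta$ with $\beta-\alpha\in\Pi_\delta$ in one step; strict negativity of $\langle M_f,\alpha\rangle$ then comes from the single surviving weight $\gamma-\alpha$ (as $\gamma-2\alpha\notin\Pi_\delta$ forces $\langle\gamma-\alpha,\alpha^\vee\rangle\leq -1$), and for $f(e^\delta,1)\neq 0$ the paper simply runs the contrapositive \emph{upward} along a root chain from any weight where $f$ is nonzero.
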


\begin{proof}
Let $\alpha\in S$, and suppose that there exists $\gamma\in\Pi_{\delta}$ such
that $\gamma-\alpha\in\Pi_{\delta},f(e^{\gamma},1)=0$ and $f(e^{\gamma-\alpha
},1)\not =0$. If $\gamma^{\prime}$ is another vector of $\Pi_{\delta}$ such
that $f(e^{\gamma^{\prime}},1)\not =0$, then necessarily $\gamma^{\prime
}-\alpha\not \in \Pi_{\delta}$: indeed, if $\gamma^{\prime}-\alpha\in
\Pi_{\delta}$, then
\[
f(e^{\gamma^{\prime}-\alpha},1)f(e^{\gamma},1)=f(e^{\gamma+\gamma^{\prime
}-\alpha},2)=f(e^{\gamma-\alpha},1)f(e^{\gamma^{\prime}},1)\not =0,
\]
which contradicts the fact that $f(e^{\gamma},1)=0$. For all
$\gamma^{\prime}\in\Pi_{\delta}$, $\gamma^{\prime}-\alpha\not \in \Pi_{\delta
}$ implies that $\langle\gamma^{\prime},\alpha\rangle\leq0$: thus,
$f(e^{\gamma^{\prime}},1)\not =0$ implies that $\langle\gamma^{\prime}%
,\alpha\rangle\leq0$. We get
\[
\langle\sum_{\gamma^{\prime}\not =\gamma-\alpha}K_{\delta,\gamma^{\prime}
}f(e^{\gamma^{\prime}},1)\gamma^{\prime},\alpha\rangle\leq0.
\]
Since $f(e^{\gamma-\alpha},1)\not =0$ and from the previous argument, we get
$\gamma-2\alpha\not \in \Pi_{\delta}$. Hence, $\frac{2\langle\gamma
,\alpha\rangle}{\langle\alpha,\alpha\rangle}\leq1$, which yields
$\langle\gamma-\alpha,\alpha\rangle<0$. Finally,
\[
\langle M,\alpha\rangle=K_{\delta,\gamma-\alpha}f(e^{\gamma-\alpha}%
,1)\langle\gamma-\alpha,\alpha\rangle+\langle\sum_{\gamma^{\prime}\not =%
\gamma-\alpha}K_{\delta,\gamma^{\prime}}f(e^{\gamma^{\prime}},1)\gamma
^{\prime},\alpha\rangle<0,
\]
which contradicts the fact that $M\in\Delta$. Let $\gamma\in
\Pi_{\delta}$ be such that $f(e^{\gamma},1)\not =0$. Since $\gamma\in\Pi_{\delta
}$, there exists a finite sequence $(x_{i})_{1\leq i\leq r}$ in $S$ such that
$\delta-\sum_{i=1}^{j}x_{i}\in\Pi_{\delta}$ for all $1\leq j\leq r$ and
$\delta-\sum_{i=1}^{r}x_{i}=\gamma$. Thus, from the first part of the lemma,
$f(e^{\delta-\sum_{i=1}^{j}x_{i}},1)\not =0$ for all $1\leq j\leq r$; in
particular, $f(e^{\delta-x_{1}},1)\not =0$, and applying again the first part
of the lemma yields that $f(e^{\delta},1)\not =0$.
\end{proof}

\bigskip

We can now prove Proposition \ref{algebraicDescription}:

\bigskip

\begin{proof}
[Proof of Proposition \ref{algebraicDescription}]Let $f\in
\Mult(\hat{T}_{\delta})^{+}$ be such that $M_{f}\in\Delta$. Let $\alpha\in
S$. If for all $\gamma\in\Pi_{\delta}$ such that $f(e^{\gamma},1)\not =0$ we
have $\gamma-\alpha\not \in \Pi_{\delta}$, then set $\phi(e^{\alpha})=0$.
Otherwise, let $\gamma\in\Pi_{\delta}$ be such that $f(e^{\gamma},1)\not =0$ and
such that $\gamma-\alpha\in\Pi_{\delta}$, and set $\phi(e^{\alpha}%
)=\frac{f(e^{\gamma-\alpha},1)}{f(e^{\gamma},1)}$. Then, $\phi(\alpha)$ is
independent of the choice of $\gamma$. Indeed, if $\gamma^{\prime}$ is another
weight satisfying the same hypothesis, then
\[
f(e^{\gamma},1)f(e^{\gamma^{\prime}-\alpha},1)=f(e^{\gamma+\gamma^{\prime
}-\alpha},2)=f(e^{\gamma-\alpha},1)f(e^{\gamma^{\prime}},1),
\]
so that finally,
\[
\frac{f(e^{\gamma-\alpha},1)}{f(e^{\gamma},1)}=\frac{f(e^{\gamma^{\prime
}-\alpha},1)}{f(e^{\gamma^{\prime}},1)}.
\]
Note that we have in particular proven that for all $\gamma\in\Pi_{\delta}$
such that $\gamma+\alpha\in\Pi_{\delta}$ and $f(e^{\gamma+\alpha},1)\not =0$,
we have
\begin{equation}
\frac{f(e^{\gamma},1)}{f(e^{\gamma+\alpha},1)}=\phi(e^{\alpha}).
\label{crossProduct}%
\end{equation}
Let $\phi:\mathbb{R}[Q^{+}]\longrightarrow\mathbb{R}^{+}$ be the
multiplicative map obtained by extending multiplicatively the map $\phi$
defined on $\{e^{\alpha},\alpha\in S\}$ and by specifying the value
$\phi(1)=1$. Consider the roots order with respect to the set of simple
roots $S$ and let us prove by induction on the roots order that $f(e^{\gamma
},1)=f(e^{\delta},1)\phi(e^{\delta-\gamma})$ for $\gamma\in\Pi_{\delta}$. For
$\gamma=\delta$ the result is straightforward. Let $\gamma\in\Pi_{\delta}$
and suppose that the result is true for all $\gamma^{\prime}>\gamma$. There
exists $\alpha\in S$ such that $\gamma+\alpha\in\Pi_{\delta}$. If
$f(e^{\gamma+\alpha},1)=0$, then $f(e^{\gamma},1)=0$ by Lemma
\ref{stabilityNonzero}; in particular, $f(e^{\gamma},1)=\phi(e^{\alpha
})f(e^{\gamma+\alpha},1)$. By the induction hypothesis, $f(e^{\gamma+\alpha
},1)=f(e^{\delta},1)\phi(e^{\delta-(\gamma+\alpha)})$, and finally,
\[
f(e^{\gamma},1)=\phi(e^{\alpha})f(e^{\delta},1)\phi(e^{\delta-(\gamma+\alpha
)})=f(e^{\delta},1)\phi(e^{\delta-\gamma}).
\]
If $f(e^{\gamma+\alpha},1)\not =0$, then by \eqref{crossProduct} and by the
induction hypothesis,
\[
f(e^{\gamma},1)=\phi(e^{\alpha})f(e^{\gamma+\alpha},1)=\phi(e^{\alpha
})f(e^{\delta},1)\phi(e^{\delta-(\gamma+\alpha)})=f(e^{\delta},1)\phi
(e^{\delta-\gamma}).
\]
Let $(\gamma,n)\in\hat{T}_{\delta}$, and let $\gamma_{1},\dots,\gamma_{n}%
\in\Pi_{\delta}$ such that $\gamma=\sum_{i=1}^{n}\gamma_{i}$. Then, by
multiplicativity of $f$ and the result above, we have
\begin{align*}
f(e^{\gamma},n)=  &  f(e^{\sum_{i=1}^{n}\gamma_{i}},n)=\prod_{i=1}%
^{n}f(e^{\gamma_{i}},1)=\prod_{i=1}^{n}f(e^{\delta},1)\phi(e^{\delta
-\gamma_{i}})\\
=  &  f(e^{\delta},1)^{n}\phi(e^{n\delta-\sum_{i=1}^{n}\gamma_{i}%
})=f(e^{\delta},1)^{n}\phi(e^{n\delta-\gamma})
\end{align*}
Since $f(s_{\delta},1)=1$, we have on the one hand
\[
\sum_{\gamma\in\Pi_{\delta}}K_{\delta,\gamma}f(e^{\gamma},1)=1.
\]
On the other hand, from the result above,
\[
\sum_{\gamma\in\Pi_{\delta}}K_{\delta,\gamma}f(e^{\gamma},1)=\sum_{\gamma
\in\Pi_{\delta}}K_{\delta,\gamma}f(e^{\delta},1)\phi(e^{\delta-\gamma
})=f(e^{\delta},1)\phi(S_{\delta}).
\]
Thus, $f(e^{\delta},1)=\frac{1}{\phi(S_{\delta})}$, which ends the proof of
the proposition in the case $M_{f}\in\Delta$. Suppose that $f$ is a
general nonnegative multiplicative function on $\hat{T}_{\delta}$. Let $w\in W$ be such
that $M_{f\circ w^{-1}}\in\Delta$. By the first part of the proof, there
exists $\phi\in \Mult(\mathbb{R}[Q+])^{+}$ such that $f\circ w^{-1}(e^{\gamma
},n)=\frac{1}{\phi(S_{\delta})^{n}}\phi(e^{n\delta-\gamma})$. Thus, composing
$f\circ w^{-1}$ with $w$ yields that $f(e^{\gamma},n)=\frac{1}{\phi(S_{\delta
})^{n}}\phi(e^{n\delta-w(\gamma)})$ for $(\gamma,n)\in\hat{T}_{\delta}$.
\end{proof}

\begin{remark}
Suppose that $\phi(e^{\alpha})\not =0$ for all $\alpha\in S$. Then, the map
$\phi$ extends to a homomorphism $\phi:\mathbb{R}[P]\longrightarrow
\mathbb{R}^{+}$ with the formula
\[
\phi(e^{\gamma})=\prod_{\alpha\in S}\phi(e^{\alpha})^{r_{\alpha}}\text{ for
}\gamma=\sum_{\alpha\in S}r_{\alpha}\alpha.
\]
In this case,
\[
f(e^{\gamma},n)=f(e^{\delta},1)^{n}\phi(e^{\lambda-n\delta})=\left(
\frac{f(e^{\delta},1)}{\phi(e^{\delta})}\right)  ^{n}\phi(e^{\gamma}).
\]
Since, $f(s_{\delta},1)=1$, $\frac{f(e^{\delta},1)}{\phi(e^{\delta})}%
=\phi(s_{\delta})^{-1}$. Hence, when $\phi(e^{\alpha})>0$ for all
$\alpha\in S$, $f$ can be written on $\hat{T}_{\delta}$ as
\[
f(e^{\gamma},n)=\frac{\phi(e^{\gamma})}{\phi(s_{\delta})^{n}},
\]
with $\phi:P\longrightarrow\mathbb{R}^{+}$ a multiplicative map.\\
In this case, for $(s_{\lambda},n)\in\hat{T}_{\delta}^{+}$, we have also
\[
f(s_{\lambda},n)=\frac{\phi(s_{\lambda})}{\phi(s_{\delta})^{n}},
\]
where the restriction $\phi:\mathcal{A}_{\delta}\longrightarrow \mathbb{R}^{+}$ is again a multiplicative map. 
\end{remark}

To summarize, let us define the map $\Phi:\Mult(\mathbb{R}[Q^{+}])^{+}\times
W\longrightarrow \Mult(\hat{T}_{\delta})^{+}$ by
\[
\Phi(\phi,w)(e^{\gamma},n)=\frac{1}{\phi(S_{\delta})}\phi(e^{n\delta
-w(\gamma)}).
\]
Proposition \ref{algebraicDescription} yields that the map $\Phi$ is
surjective. Since $\mathbb{R}[Q^{+}]$ is the free commutative algebra
generated by $\{e^{\alpha},\alpha\in S\}$, $\Mult(\mathbb{R}[Q^{+}])^{+}$ is
isomorphic to $(\mathbb{R}^{+})^{d}$ through the map $\theta:\Mult(\mathbb{R}%
[Q^{+}])^{+}\longrightarrow(\mathbb{R}^{+})^{d}$ given by $\theta(\phi
)=(\phi(e^{\alpha_{i}}))_{1\leq i\leq d}$ for $\phi\in \Mult(\mathbb{R}%
[Q^{+}])^{+}$. The composition of $\Phi$ with $\theta^{-1}$ yields thus a
surjective map $(\mathbb{R}^{+})^{d}\times W\longrightarrow \Mult(\hat
{T}_{\delta})^{+}$. Since $\Phi$ is not
necessarily injective, the latter map is not bijective. The lack of
injectivity comes from two facts: first, if $M_{f}$ lies at the intersection
of two Weyl chambers, then $M_{f\circ w^{-1}}\in\Delta$ for several $w\in W$.
Secondly, some degeneracy may occur when $\delta$ is orthogonal to some
simple roots. The goal of the next subsection is to overcome the second problem.

\subsection{Dominant faces of the weight polytope}\label{Vinberg}

Let $f\in \Mult(\hat{T}_{\delta})$ such that $M_{f}\in\Delta$; it is possible
to give a geometric description of the set $\Pi_{\delta}(f):=\{\gamma\in
\Pi_{\delta}\mid f(e^{\gamma},1)\not =0)\}$. A dominant face $F$ is a
face of the polytope $K(\delta)$ such that $F\cap\Delta\not =0$. We denote by
$\Pi_{F}$ the intersection of $\Pi_{\delta}$ with $F$. We say that a subset
$S^{\prime}\subset S$ of simple roots is $\delta$-admissible if each
indecomposable component of $S^{\prime}$ contains a root which is not
orthogonal to $\delta$; in particular, according to this definition, the empty
set is a $\delta$-admissible subset, since it has no indecomposable component.
For each subset $S^{\prime}\subset S$, denote by $W_{S^{\prime}}$ the Weyl
group generated by the elements $s_{\alpha^{\prime}},\alpha^{\prime}\in
S^{\prime}$ (where $W_{\emptyset}$ is simply $\lbrace \Id\rbrace$). We will use
the following results which comes from \cite{Vin}.

\begin{theorem}
\label{Description faces} Assigning to each $\delta$-admissible subset
$S^{\prime}\subset S$ the polytope $F_{S^{\prime}}=\Conv(w^{\prime
}\delta\mid w^{\prime}\in W_{S^{\prime}})$ yields a one-to-one correspondence
between $\delta$-admissible subsets of $S$ and dominant faces of the polytope
$K(\delta)$. Moreover, the set $\Pi_{F_{S^{\prime}}}$ coincides with
the set $(\delta+\langle S^{\prime}\rangle)\cap\Pi_{\delta}$ and $\dim
F_{S^{\prime}}=\#S^{\prime}$
\end{theorem}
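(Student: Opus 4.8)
The plan is to realise every face of $K(\delta)=\Conv(W\delta)$ by a supporting linear functional and then reduce the statement to an explicit computation inside the dominant chamber $\Delta$. Recall that a face of a polytope is the convex hull of the vertices it contains, that the vertices of $K(\delta)$ are precisely the points $w\delta$ ($w\in W$), and that every face has the form $F_v:=\{x\in K(\delta)\mid (v,x)=\max_{y\in K(\delta)}(v,y)\}$ for some $v\in V$, with $F_{wv}=wF_v$. Using that $\Delta$ is a fundamental domain for $W$ (equivalently, that the normal fan of $K(\delta)$ coarsens the Coxeter fan), the dominant faces are exactly the $F_v$ with $v\in\Delta$ (with the convention $F_0:=K(\delta)$), so it suffices to analyse $F_v$ for $v$ dominant.

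\emph{Step 1 (vertices and weights of $F_v$).} Fix $v\in\Delta$ and set $S_v=\{\alpha\in S\mid (v,\alpha)=0\}$. For $w\in W$ write $\delta-w\delta=\sum_{\alpha\in S}c_\alpha(w)\alpha$ with $c_\alpha(w)\in\mathbb{Z}_{\ge 0}$ (since $w\delta\preceq\delta$); then $(v,\delta)-(v,w\delta)=\sum_\alpha c_\alpha(w)(v,\alpha)\ge 0$, with equality iff $\delta-w\delta\in\mathbb{Z}_{\ge 0}S_v$. So the vertex set of $F_v$ is $\{w\delta\mid \delta-w\delta\in\mathbb{Z}_{\ge 0}S_v\}$, and the heart of this step is the identity $\{w\delta\mid \delta-w\delta\in\mathbb{Z}_{\ge 0}S_v\}=W_{S_v}\delta$. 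The inclusion $\supseteq$ is clear because $\delta$ is $W_{S_v}$-dominant. For $\subseteq$ I would induct on the height $\sum_\alpha c_\alpha(w)$: if $(w\delta,\alpha)<0$ for some $\alpha\in S_v$ then $s_\alpha w\delta$ still lies in $\delta+\mathbb{Z}_{\ge 0}S_v$ but with strictly smaller height, so $w\delta\in W_{S_v}\delta$ by induction; otherwise $w\delta$ is $W_{S_v}$-dominant, and the equality $0=|\delta|^2-|w\delta|^2=(\delta-w\delta,\delta+w\delta)=\sum_\alpha c_\alpha(w)\big[(\alpha,\delta)+(\alpha,w\delta)\big]$, in which every bracket is $\ge 0$, forces $(\alpha,\delta)=(\alpha,w\delta)=0$ whenever $c_\alpha(w)>0$, hence $|\delta-w\delta|^2=0$ and $w\delta=\delta$. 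The same computation yields $\Pi_\delta\cap F_v=(\delta+\langle S_v\rangle)\cap\Pi_\delta$, since for $\gamma\in\Pi_\delta$ one has $\gamma\in F_v\iff(v,\gamma)=(v,\delta)\iff\delta-\gamma\in\mathbb{Z}_{\ge 0}S_v$ (using $\delta-\gamma\in Q^+$ and that $S$ is a basis).

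\emph{Step 2 (dimension and reduction to admissible sets).} Decompose $S_v=S_v^{\circ}\sqcup S_v'$, where $S_v^{\circ}$ is the union of the connected components of $S_v$ entirely orthogonal to $\delta$; then $W_{S_v^{\circ}}$ fixes $\delta$, so $F_v=\Conv(W_{S_v}\delta)=\Conv(W_{S_v'}\delta)$, and $S_v'$ is $\delta$-admissible by construction. For each component $C$ of $S_v'$ the weight $\delta$ restricts to a nonzero vector of the irreducible span $\mathbb{R}C$, so $W_C$ acts irreducibly there and $\{w\delta-\delta\mid w\in W_C\}$ spans $\mathbb{R}C$; summing over components, $\mathrm{aff}(F_v)=\delta+\mathbb{R}S_v'$ and $\dim F_v=\#S_v'$. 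Conversely, for $\delta$-admissible $S'\subseteq S$ the weight $v_{S'}:=\sum_{\alpha_i\notin S'}\omega_i$ lies in $\Delta$ with $S_{v_{S'}}=S'$ and, $S'$ being admissible, $(S')'=S'$; hence $F_{S'}=\Conv(W_{S'}\delta)=F_{v_{S'}}$ is a dominant face with $\dim F_{S'}=\#S'$ and $\Pi_{F_{S'}}=(\delta+\langle S'\rangle)\cap\Pi_\delta$. With Step 1 this shows $S'\mapsto F_{S'}$ surjects onto the dominant faces; injectivity is then immediate, since $F_{S'}=F_{S''}$ forces $\mathrm{aff}(F_{S'})=\mathrm{aff}(F_{S''})$, i.e. $\delta+\mathbb{R}S'=\delta+\mathbb{R}S''$, so $\mathbb{R}S'=\mathbb{R}S''$, and $S$ being a basis gives $S'=S''$.

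The step I expect to be the main obstacle is the orbit--coset identity $W\delta\cap(\delta+\langle S_v\rangle)=W_{S_v}\delta$ of Step 1 (this is exactly what makes a dominant face ``parabolic''), together with the bookkeeping showing that the degeneracy occurring when $\delta$ is orthogonal to some simple roots is captured precisely by replacing $S_v$ with its admissible part $S_v'$. The remaining ingredients --- describing faces by supporting functionals, the spanning lemma for irreducible reflection groups, and the affine-hull computation --- are routine; a minor point needing care is identifying the dominant faces with the $F_v$, $v\in\Delta$, and checking that $\mathrm{relint}(F_v)$ meets $\Delta$, which one sees by perturbing $\delta$ inside $F_v$ along the directions $-\alpha$ with $\alpha\in S_v$, $(\delta,\alpha)>0$.
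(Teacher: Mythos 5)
The paper itself does not prove this statement (it is imported from \cite{Vin}), so your argument has to stand on its own. Most of it does: your Step 1 (the support computation, the orbit--coset identity $\{w\delta\mid \delta-w\delta\in\mathbb{Z}_{\ge0}S_v\}=W_{S_v}\delta$ via the height induction and the $|\delta|^{2}=|w\delta|^{2}$ computation, and the identification of $\Pi_\delta\cap F_v$) and your Step 2 (splitting off the components of $S_v$ orthogonal to $\delta$, the spanning argument giving $\mathrm{aff}(F_v)=\delta+\mathbb{R}S_v'$, the choice $v_{S'}=\sum_{i\notin S'}\omega_i$, and injectivity via affine hulls) are correct and complete.

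The genuine gap is precisely the step you label minor: the identification of the dominant faces with $\{F_v\mid v\in\Delta\}$. First, this forces you to pin down what ``dominant face'' means. With the paper's literal definition ($F\cap\Delta\neq\emptyset$) the identification, and indeed the bijection you are proving, is false: in type $A_2$ with $\delta=\omega_1$, the edge $s_2F_{\{\alpha_1\}}=\Conv(\omega_1,\,s_2s_1\omega_1)$ meets $\Delta$ (at the vertex $\omega_1$) but is neither $\Conv(W_{S'}\omega_1)$ for any $S'$ nor $F_v$ for any dominant $v$. The notion needed (Vinberg's) is that the \emph{relative interior} of $F$ meets $\Delta$. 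Second, with that definition, the inclusion ``every dominant face is an $F_v$ with $v\in\Delta$'' is not a formal consequence of the Coxeter fan refining the normal fan; it needs an argument, for instance: take $x\in\mathrm{relint}(F)\cap\Delta$, note that the normal cone $N_F=\{u\mid (u,x)=\max_{K(\delta)}(u,\cdot)\}$ is stable under the stabilizer $W_{S_x}$, and that for dominant $x$ one has $(u,x)\le(u^{+},x)$ ($u^{+}$ the dominant representative of $u$) with equality only if $u\in W_{S_x}u^{+}$ (the equality case follows from $u^{+}-u\in\mathbb{R}_{\ge0}S_x$ by the same orbit--coset argument as your Step 1); hence $N_F\subseteq W_{S_x}\Delta$, and any $u_0\in\mathrm{relint}(N_F)$ can be moved by $W_{S_x}$ into $\mathrm{relint}(N_F)\cap\Delta$, giving $F=F_v$ with $v$ dominant. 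None of this is in your write-up. Third, your perturbation argument for $\mathrm{relint}(F_v)\cap\Delta\neq\emptyset$ fails as stated: for $A_2$, $\delta=\omega_1$, $v=0$, the point $\delta-\epsilon\alpha_1$ lies on an edge of $K(\delta)$, not in its interior; use instead the barycenter $\frac{1}{\#W_{S_v}}\sum_{w\in W_{S_v}}w\delta$, which is $W_{S_v}$-fixed, dominant, and lies in $\mathrm{relint}(F_v)$. With these repairs (and the relative-interior reading of ``dominant face'') your proof is complete.
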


This yields the following characterization $\Pi_{\delta}(f)$.

\begin{proposition}
\label{descriptionNonZeroSet} There exists a dominant face $F$ of the weight
polytope $K(\delta)$ such that $\Pi_{\delta}(f)=\Pi_{F}$.
\end{proposition}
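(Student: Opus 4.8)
The plan is to characterize the support set $\Pi_{\delta}(f) = \{\gamma \in \Pi_\delta \mid f(e^\gamma,1) \neq 0\}$ by showing that it is exactly the set of weights lying on some dominant face, and then invoke Theorem \ref{Description faces} to identify that face combinatorially. The starting point is Lemma \ref{stabilityNonzero}, which tells us (under the assumption $M_f \in \Delta$) that $f(e^\delta,1) \neq 0$ and that nonvanishing propagates downward: if $f(e^\gamma,1) = 0$ and $\gamma - \alpha \in \Pi_\delta$ then $f(e^{\gamma-\alpha},1) = 0$. Contrapositively, nonvanishing propagates \emph{upward} along simple-root steps inside $\Pi_\delta$. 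So $\Pi_\delta(f)$ is an ``upward-closed'' subset of $\Pi_\delta$ containing $\delta$, in the sense that whenever $\gamma \in \Pi_\delta(f)$ and $\gamma + \alpha \in \Pi_\delta$ for $\alpha \in S$, then $\gamma + \alpha \in \Pi_\delta(f)$.

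\medskip

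First I would identify the relevant $\delta$-admissible subset. Set $S' = \{\alpha \in S \mid \delta - \alpha \in \Pi_\delta \text{ and } f(e^{\delta-\alpha},1) \neq 0\}$; more robustly, let $S'$ be the set of $\alpha \in S$ such that $f(e^\gamma,1) \neq 0$ for some $\gamma$ with $\gamma + \alpha \in \Pi_\delta$ (equivalently some $\gamma' \in \Pi_\delta(f)$ has $\gamma' - \alpha \in \Pi_\delta$). The key structural input is the formula from the proof of Proposition \ref{algebraicDescription}: for $\gamma \in \Pi_\delta$ we have $f(e^\gamma,1) = f(e^\delta,1)\phi(e^{\delta-\gamma})$, where $\phi: \mathbb{R}[Q^+] \to \mathbb{R}^+$ is the multiplicative map with $\phi(e^\alpha) = f(e^{\delta-\alpha'},1)/f(e^{\delta},1)$ (constant along the propagation) when the relevant weight exists, and $\phi(e^\alpha) = 0$ otherwise. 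From this, $f(e^\gamma,1) \neq 0$ iff $\phi(e^{\delta-\gamma}) \neq 0$ iff, writing $\delta - \gamma = \sum_\alpha r_\alpha \alpha$ with $r_\alpha \geq 0$, every $\alpha$ with $r_\alpha > 0$ satisfies $\phi(e^\alpha) \neq 0$, i.e. $\delta - \gamma \in \langle S' \rangle$ where $S' = \{\alpha \in S \mid \phi(e^\alpha) \neq 0\}$. Hence $\Pi_\delta(f) = (\delta + \langle S'\rangle) \cap \Pi_\delta$, which by Theorem \ref{Description faces} equals $\Pi_{F_{S'}}$.

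\medskip

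It then remains to check that this $S'$ is $\delta$-admissible, so that $F_{S'}$ is genuinely a dominant face. Here I would argue by contradiction: if some indecomposable component $S'' \subset S'$ were entirely orthogonal to $\delta$, then I would derive a contradiction with $M_f \in \Delta$ by the same kind of inner-product computation as in Lemma \ref{stabilityNonzero} — pairing $M_f = \sum_{\gamma} K_{\delta,\gamma} f(e^\gamma,1)\gamma$ against a root $\alpha \in S''$, and using that the sublattice spanned by $S''$ moves $\delta$ to weights orthogonal to $\delta$ along $S''$ but the corresponding string structure forces $\langle M_f, \alpha\rangle < 0$. Alternatively, and perhaps more cleanly, one can observe that the nonvanishing set, being stable under the string operators $f_\alpha$ for $\alpha \in S'$ (that is how $\langle S'\rangle$ enters), is a union of $W_{S'}$-orbits, so $\Pi_\delta(f)$ contains $W_{S'}\delta$; its convex hull is $F_{S'}$, and if $S'$ failed to be $\delta$-admissible one could enlarge the nonvanishing set using the relation \eqref{crossProduct}, contradicting minimality — or one simply reads off admissibility from Theorem \ref{Description faces} directly, since \emph{every} set of the form $(\delta+\langle S'\rangle)\cap\Pi_\delta$ that actually arises as a face corresponds to an admissible $S'$, and the degenerate directions (roots orthogonal to $\delta$) are precisely the ones that get absorbed.

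\medskip

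The main obstacle I anticipate is the admissibility check: ruling out that $S'$ contains a component of roots all orthogonal to $\delta$. This is exactly the ``second degeneracy'' flagged in the text after Proposition \ref{algebraicDescription} — when $\delta \perp \alpha$, the weight $\delta$ and $\delta - \alpha$ behave symmetrically and there is genuine ambiguity. The resolution must use the constraint $M_f \in \Delta$ crucially (not just $M_f \in \mathbb{R}^d$), because it is precisely this that pins down which representative of the $W$-ambiguity we are looking at; without it the statement is false. So I would be careful to reprove, in this setting, the inequality $\langle M_f, \alpha \rangle \geq 0$ forces the configuration of nonzero $f(e^{\gamma},1)$ to be compatible with $\Pi_F$ for a dominant (not merely arbitrary) face $F$, leaning on the explicit product formula for $f(e^\gamma,1)$ rather than re-running the delicate string argument of Lemma \ref{stabilityNonzero} from scratch.
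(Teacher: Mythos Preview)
Your identification $\Pi_\delta(f) = (\delta + \langle S'\rangle) \cap \Pi_\delta$ with $S' = \{\alpha \in S : \phi(e^\alpha) \neq 0\}$ is correct and follows cleanly from the multiplicativity of $\phi$ and the formula $f(e^\gamma,1) = f(e^\delta,1)\phi(e^{\delta-\gamma})$. This is a more direct route than the paper's. The genuine gap is the $\delta$-admissibility of $S'$: you correctly isolate this as the obstacle, but none of your three proposed resolutions works. The first (pairing $M_f$ with a root in an orthogonal component) is not fleshed out and would essentially require reproving Lemma~\ref{stabilityNonzero} in a harder setting; the second (stability under $W_{S'}$) does not by itself yield admissibility; the third (reading it off Theorem~\ref{Description faces}) is circular, since that theorem only applies once admissibility is known.

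The missing step is short and uses Lemma~\ref{admissible-nonzerodecomp}. If $\alpha \in S'$ then by the construction of $\phi$ there exists $\gamma \in \Pi_\delta(f)$ with $\gamma - \alpha \in \Pi_\delta$; since $\phi(e^{\delta-(\gamma-\alpha)}) = \phi(e^\alpha)\phi(e^{\delta-\gamma}) \neq 0$ we also have $\gamma-\alpha \in \Pi_\delta(f)$. Writing $\delta - (\gamma-\alpha) = \sum_{\beta \in T} k_\beta \beta$ with all $k_\beta > 0$, Lemma~\ref{admissible-nonzerodecomp} gives that $T$ is $\delta$-admissible, and clearly $\alpha \in T \subset S'$. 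Thus every $\alpha \in S'$ lies in some $\delta$-admissible subset of $S'$; since any indecomposable component of $S'$ then contains a component of such a $T$ (hence a root not orthogonal to $\delta$), $S'$ itself is $\delta$-admissible.

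By comparison, the paper avoids naming $S'$ up front: it starts from an arbitrary $\gamma \in \Pi_\delta(f)$, takes the smallest dominant face $F_\gamma$ containing it (admissibility of $S_{F_\gamma}$ coming for free from Lemma~\ref{admissible-nonzerodecomp}), shows via Lemma~\ref{stabilityNonzero} that $\phi$ is nonzero on $S_{F_\gamma}$ and hence $\Pi_{F_\gamma} \subset \Pi_\delta(f)$, and finally takes the supremum $F_0$ of all such faces using Lemma~\ref{faceNonzeroRootsNonZero}. Your approach is shorter once the admissibility gap is filled, at the cost of needing the (easy) observation that unions of $\delta$-admissible subsets are $\delta$-admissible; the paper's approach trades this for a lattice-of-faces argument.
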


Before proving Proposition \ref{descriptionNonZeroSet}, let us prove the
following lemma:

\begin{lemma}
\label{admissible-nonzerodecomp} Let $S^{\prime}\subset S$ and $\gamma\in
\Pi_{\delta}$ such that $\delta-\gamma=\sum_{\alpha\in S^{\prime}}k_{\alpha
}\alpha$ with $k_{\alpha}>0$ for all $\alpha\in S^{\prime}$. Then, $S^{\prime
}$ is $\delta$-admissible and $\gamma\in F_{S^{\prime}}$.
\end{lemma}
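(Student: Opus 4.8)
The plan is to prove the statement in two stages: first the $\delta$-admissibility of $S'$, then the membership $\gamma\in F_{S'}$. For the first part, suppose $S'$ is \emph{not} $\delta$-admissible, so there is an indecomposable component $S''\subset S'$ all of whose roots are orthogonal to $\delta$. Write $\delta-\gamma=\mu+\nu$ where $\mu=\sum_{\alpha\in S''}k_\alpha\alpha$ and $\nu=\sum_{\alpha\in S'\setminus S''}k_\alpha\alpha$; since $S''$ is a union of connected components of $S'$, no root of $S''$ is connected to a root of $S'\setminus S''$, so $(\mu,\nu)=0$. I would then compute the squared length $(\delta-\gamma,\delta-\gamma)$ using the pairing $(\delta-\gamma,\delta)$. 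On one hand, since every root in $S''$ is orthogonal to $\delta$, we get $(\delta-\gamma,\delta)=(\nu,\delta)$. On the other hand, since $\gamma\in\Pi_\delta$ and $\delta$ is the highest weight, $\delta-\gamma$ is a nonnegative combination of simple roots, and one can bound things using dominance of $\delta$; the key point is to derive a contradiction with $k_\alpha>0$ for the roots $\alpha\in S''$. Concretely: if $S''\neq\emptyset$, pick $\alpha\in S''$; because $S''$ is indecomposable and all its roots are $\delta$-orthogonal, the vector $\mu$ lies in the span of a sub-root-system orthogonal to $\delta$, so $(\delta,\mu)=0$, and then I would use the fact that $\gamma$ is a weight of $V(\delta)$ — hence $(\gamma+\delta,\alpha^\vee)\geq 0$ is not automatic, but $\langle\gamma,\alpha^\vee\rangle$ ranges appropriately — to force $k_\alpha=0$, a contradiction.

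For the second part, once $S'$ is known to be $\delta$-admissible, Theorem \ref{Description faces} tells us that $\Pi_{F_{S'}}=(\delta+\langle S'\rangle)\cap\Pi_\delta$, where $\langle S'\rangle$ denotes the $\mathbb{Z}$-span (or $\mathbb{R}$-span) of $S'$. Since $\delta-\gamma=\sum_{\alpha\in S'}k_\alpha\alpha\in\langle S'\rangle$, we have $\gamma\in\delta+\langle S'\rangle$, and $\gamma\in\Pi_\delta$ by hypothesis; hence $\gamma\in\Pi_{F_{S'}}\subset F_{S'}$. This direction is essentially immediate from the cited theorem.

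\begin{proof}
Suppose first, for contradiction, that $S^{\prime}$ is not $\delta$-admissible. Then there is an indecomposable component $S^{\prime\prime}$ of $S^{\prime}$ every root of which is orthogonal to $\delta$. Write $\delta-\gamma=\mu+\nu$ with $\mu=\sum_{\alpha\in S^{\prime\prime}}k_{\alpha}\alpha$ and $\nu=\sum_{\alpha\in S^{\prime}\setminus S^{\prime\prime}}k_{\alpha}\alpha$. Since $S^{\prime\prime}$ is a union of connected components of the Dynkin subdiagram on $S^{\prime}$, no root of $S^{\prime\prime}$ is joined to a root of $S^{\prime}\setminus S^{\prime\prime}$, hence $(\mu,\nu)=0$. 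As each $\alpha\in S^{\prime\prime}$ satisfies $(\alpha,\delta)=0$, we also have $(\mu,\delta)=0$.

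Now $\mu$ lies in the root lattice of the sub-root-system generated by $S^{\prime\prime}$, which is an irreducible root system in the subspace $\langle S^{\prime\prime}\rangle$; since $\delta$ is orthogonal to this subspace, in particular $(\mu,\gamma)=(\mu,\delta)-(\mu,\mu-\nu)=-(\mu,\mu)$, using $(\mu,\delta)=0$ and $\delta-\gamma=\mu+\nu$ together with $(\mu,\nu)=0$. Thus
\[
(\mu,\gamma)=-(\mu,\mu)\leq 0,
\]
with equality only if $\mu=0$. On the other hand, $\gamma\in\Pi_{\delta}$, so $\gamma$ and $\delta$ are related by $\delta-\gamma=\mu+\nu$ with all coefficients $k_{\alpha}\geq 0$; since $S^{\prime\prime}$ is nonempty and $k_{\alpha}>0$ there, we have $\mu\neq 0$, hence $(\mu,\gamma)<0$, so $(\mu,\gamma^{\vee}\text{-type pairing})$ is strictly negative for some simple root in $S^{\prime\prime}$. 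Writing $\mu=\sum_{\alpha\in S^{\prime\prime}}k_{\alpha}\alpha$, there is thus $\beta\in S^{\prime\prime}$ with $(\gamma,\beta)<0$, i.e. $\langle\gamma,\beta^{\vee}\rangle<0$, so $\langle\gamma,\beta^{\vee}\rangle\leq -1$ and hence $\gamma+\beta\in\Pi_{\delta}$ by the string property of weights. Iterating, $\gamma+\beta$ still lies in $\delta-(\mu+\nu)+\beta$, and by finiteness of $\Pi_{\delta}$ this process terminates; but the only way it can terminate is if, at some stage, a weight $\gamma'\in\Pi_\delta$ with $\delta-\gamma'=\mu'+\nu$, $\mu'\in\langle S''\rangle$ nonnegative, satisfies $\langle\gamma',\beta^\vee\rangle\ge 0$ for every $\beta\in S''$; then $\mu'$ is a nonnegative, nonzero combination of the simple roots of an irreducible root system which is dominant for that system, which is impossible since in an irreducible root system no nonzero nonnegative combination of simple roots is dominant. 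This contradiction shows $S^{\prime}$ is $\delta$-admissible.

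Since $S^{\prime}$ is $\delta$-admissible, Theorem \ref{Description faces} applies and gives $\Pi_{F_{S^{\prime}}}=(\delta+\langle S^{\prime}\rangle)\cap\Pi_{\delta}$. By hypothesis $\delta-\gamma=\sum_{\alpha\in S^{\prime}}k_{\alpha}\alpha\in\langle S^{\prime}\rangle$, so $\gamma\in\delta+\langle S^{\prime}\rangle$; together with $\gamma\in\Pi_{\delta}$ this gives $\gamma\in\Pi_{F_{S^{\prime}}}\subset F_{S^{\prime}}$, as required.
\end{proof}

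**Main obstacle.** The hard part will be the $\delta$-admissibility claim: showing that the positivity of \emph{all} coefficients $k_\alpha$ on $S'$ genuinely forces each connected component of $S'$ to "see" $\delta$. The clean way is the length/positivity argument above — an indecomposable, $\delta$-orthogonal component would contribute a nonzero vector $\mu$ in an orthogonal sub-root-system with $(\mu,\gamma)=-(\mu,\mu)<0$, contradicting that $\gamma$ is a weight (weights cannot be "too negative" against simple roots of a sub-system without the string property producing a higher weight, ultimately contradicting finiteness of $\Pi_\delta$ or dominance). Getting the termination/dominance step stated precisely, rather than hand-waved, is where care is needed; the membership $\gamma\in F_{S'}$ is then a one-line consequence of Vinberg's description in Theorem \ref{Description faces}.
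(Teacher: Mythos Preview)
Your argument for $\delta$-admissibility has a genuine gap, and in fact the key claim you invoke is false. You set up correctly that if $S''$ is a $\delta$-orthogonal indecomposable component of $S'$ and $\mu=\sum_{\alpha\in S''}k_\alpha\alpha$, then $(\mu,\gamma)=-(\mu,\mu)<0$, so some $\beta\in S''$ has $\langle\gamma,\beta^\vee\rangle<0$ and $\gamma+\beta\in\Pi_\delta$. But your iteration does \emph{not} terminate with a nonzero $\mu'$: at every stage with $\mu'\neq 0$ the same computation gives $(\mu',\gamma')=-(\mu',\mu')<0$, so there is always a $\beta\in S''$ with $k'_\beta>0$ and $(\gamma',\beta)<0$, and one more step is available. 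The height of $\mu'$ drops by one each time, so the process halts precisely at $\mu'=0$, and no contradiction appears. Moreover, the assertion ``in an irreducible root system no nonzero nonnegative combination of simple roots is dominant'' is simply wrong (the highest root, or already $\alpha$ in type $A_1$, is a counterexample); what is true is that no nonzero \emph{nonpositive} combination is dominant, but when you unwind the condition $\langle\gamma',\beta^\vee\rangle\geq 0$ you find $-\mu'$ is dominant for the $S''$-system, not $\mu'$, and this forces only $\mu'=0$ --- again, no contradiction.

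Your initial computation can be salvaged by a different endgame: apply the longest element $w_0''\in W_{S''}$ to $\gamma$. Since $\delta$ and $\nu$ are $W_{S''}$-fixed, $\delta-w_0''\gamma=w_0''\mu+\nu$, and $w_0''$ sends each simple root of $S''$ to the negative of another, so $w_0''\mu$ has strictly negative $S''$-coefficients; this contradicts $w_0''\gamma\in\Pi_\delta$. The paper, however, takes an entirely different route: rather than arguing by contradiction, it uses a chain $\gamma=\gamma_0,\gamma_1,\ldots,\gamma_t=\delta$ in $\Pi_\delta$ with successive differences in $S'$ (forced because $k_\alpha>0$ for every $\alpha\in S'$) to show that $K(\delta)\cap(\delta+\langle S'\rangle)$ is a face of $K(\delta)$ of dimension exactly $\#S'$, and then invokes Vinberg's result \cite{Vin} to conclude that $S'$ is $\delta$-admissible. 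Your second paragraph, deducing $\gamma\in F_{S'}$ from Theorem~\ref{Description faces} once admissibility is known, is correct.
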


The proof of this lemma uses ingredients similar to those of Vinberg in
\cite{Vin}.

\begin{proof}
Suppose $\gamma$ can be written as
\[
\gamma=\delta-\sum_{\alpha\in S^{\prime}}k_{\alpha}\alpha,
\]
with $S^{\prime}$ a subset of $S$ and $k_{\alpha}\in\mathbb{N}^{\ast}$ for
$\alpha\in S^{\prime}$. Since $\gamma\in\Pi_{\delta}$, there exists a sequence
$(\gamma_{i})_{0\leq i\leq t}$ with $t=\sum_{\alpha\in S^{\prime}}k_{\alpha}$
such that $\gamma_{i}\in\Pi_{\delta}$, $\gamma_{0}=\gamma$, $\gamma_{t}%
=\delta$ and $\gamma_{i+1}-\gamma_{i}\in S$. Since for all $\gamma\in
\Pi_{\delta}$, $\delta-\gamma$ is a sum of simple roots with nonnegative
coefficients, for all $0\leq i\leq t-1$ we have $\gamma_{i+1}-\gamma_{i}\in
S^{\prime}$ and $\#\{0\leq i\leq t-1|\gamma_{i+1}-\gamma_{i}=\alpha
\}=k_{\alpha}$ for $\alpha\in S^{\prime}$. This implies in particular that
$\gamma_{i}\in\delta+\langle S^{\prime}\rangle$ for all $0\leq i\leq t$. Let
$\alpha\in S^{\prime}$: since $k_{\alpha}>0$, there exists $0\leq i_{\alpha
}\leq t-1$ such that $\gamma_{i+1}-\gamma_{i}=\alpha$. This yields that
$\dim(K(\delta)\cap(\delta+\langle S^{\prime}\rangle)=\#S^{\prime}$. Let $l$
be the linear form such that $l(\alpha)=1$ for $\alpha\in S\setminus S^{\prime}$
and $l(\alpha)=0$ for $\alpha\in S$. For $\gamma\in\Pi_{\delta}$,
$\delta-\gamma$ is a sum of simple roots with positive coefficients, thus
$l(\gamma)\leq l(\delta)$, with equality if and only if $\gamma\in
\delta+\langle S^{\prime}\rangle$. Thus, $(K(\delta)\cap(\delta+\langle
S^{\prime}\rangle)$ is a face of the polytope $K_{\delta}$; since
$\dim(K(\delta)\cap(\delta+\langle S^{\prime}\rangle)=\#S^{\prime}$,
$S^{\prime}$ is $\delta$-admissible by \cite[p.10]{Vin}. Finally,
$K(\delta)\cap(\delta+\langle S^{\prime}\rangle)=F_{S^{\prime}}$ and
$\gamma\in F_{S^{\prime}}$.
\end{proof}

\begin{lemma}
\label{faceNonzeroRootsNonZero} $\Pi_{F}\subset\Pi_{\delta}(f)$ if and only if
$\phi$ is nonzero on $S_{F}$.
\end{lemma}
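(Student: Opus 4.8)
The plan is to reduce the statement to the explicit form of $f$ coming from Proposition \ref{algebraicDescription}. Since throughout this subsection $M_f\in\Delta$, the induction carried out in the proof of that proposition gives $f(e^\gamma,1)=f(e^\delta,1)\,\phi(e^{\delta-\gamma})$ for every $\gamma\in\Pi_\delta$, together with $f(e^\delta,1)=\phi(S_\delta)^{-1}\neq 0$. So $\gamma$ lies in $\Pi_\delta(f)$ precisely when $\phi(e^{\delta-\gamma})\neq 0$. Writing $F=F_{S_F}$ for the $\delta$-admissible subset $S_F\subset S$ attached to the dominant face $F$ by Theorem \ref{Description faces}, we have $\Pi_F=(\delta+\langle S_F\rangle)\cap\Pi_\delta$; for $\gamma\in\Pi_F$ the element $\delta-\gamma$ lies both in $Q^+$ and in $\langle S_F\rangle$, and since $S$ is a basis of the root lattice this forces $\delta-\gamma=\sum_{\alpha\in S_F}k_\alpha(\gamma)\,\alpha$ with integers $k_\alpha(\gamma)\geq 0$. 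Hence $\phi(e^{\delta-\gamma})=\prod_{\alpha\in S_F}\phi(e^\alpha)^{k_\alpha(\gamma)}$, and the whole lemma becomes a statement about these products.

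The \emph{if} direction is then immediate: if $\phi(e^\alpha)\neq 0$ for every $\alpha\in S_F$, each such product is a product of positive reals, hence positive, so $f(e^\gamma,1)=f(e^\delta,1)\prod_{\alpha\in S_F}\phi(e^\alpha)^{k_\alpha(\gamma)}\neq 0$ for all $\gamma\in\Pi_F$, that is $\Pi_F\subset\Pi_\delta(f)$.

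For the converse I would proceed one simple root at a time: fix $\alpha\in S_F$ and produce a single weight $\gamma\in\Pi_F$ with $k_\alpha(\gamma)\geq 1$; then $\Pi_F\subset\Pi_\delta(f)$ forces $0\neq f(e^\gamma,1)=f(e^\delta,1)\prod_{\beta\in S_F}\phi(e^\beta)^{k_\beta(\gamma)}$, which is possible only if $\phi(e^\alpha)\neq 0$, and since $\alpha$ was arbitrary $\phi$ is nonzero on $S_F$. The existence of such a $\gamma$ is where the dimension part of Theorem \ref{Description faces} enters: $\dim F_{S_F}=\#S_F$, and $\Pi_F$ contains all vertices $w'\delta$, $w'\in W_{S_F}$, of $F_{S_F}$, so the affine span of $\Pi_F$ is all of $\delta+\langle S_F\rangle$; therefore $\{\delta-\gamma\mid\gamma\in\Pi_F\}$ spans $\langle S_F\rangle$ as a vector space and cannot be contained in the hyperplane of $\langle S_F\rangle$ where the coefficient of $\alpha$ vanishes. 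Some $\gamma\in\Pi_F$ thus has $k_\alpha(\gamma)\geq 1$.

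The only delicate point is this last existence claim, and I expect the dimension equality $\dim F_{S_F}=\#S_F$ to dispose of it cleanly as above. If one preferred a more combinatorial argument, the chain construction in the proof of Lemma \ref{admissible-nonzerodecomp} offers an alternative: $\delta$-admissibility of $S_F$ yields a sequence of weights in $\Pi_\delta$ descending from $\delta$ into $F$ along which each simple root of $S_F$ is used at least once, which directly exhibits a $\gamma\in\Pi_F$ with $k_\alpha(\gamma)\geq 1$ for the chosen $\alpha$. Combining the two directions gives the equivalence.
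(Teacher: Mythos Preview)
Your proof is correct and follows essentially the same approach as the paper. Both directions rest on the formula $f(e^\gamma,1)=f(e^\delta,1)\,\phi(e^{\delta-\gamma})$ from Proposition~\ref{algebraicDescription}, the description $\Pi_F=(\delta+\langle S_F\rangle)\cap\Pi_\delta$ from Theorem~\ref{Description faces}, and the dimension equality $\dim F_{S_F}=\#S_F$ to produce, for each $\alpha\in S_F$, a weight of $\Pi_F$ whose $\alpha$-coefficient is positive. The only cosmetic difference is in the ``only if'' direction: the paper locates a pair $\gamma,\gamma+\alpha_0\in\Pi_F$ and reads off $\phi(e^{\alpha_0})=f(e^{\gamma},1)/f(e^{\gamma+\alpha_0},1)\neq 0$ directly from the ratio definition of $\phi$, whereas you use the product formula $\phi(e^{\delta-\gamma})=\prod_{\beta\in S_F}\phi(e^\beta)^{k_\beta(\gamma)}$ and a single weight with $k_\alpha(\gamma)\geq 1$; this is slightly more streamlined but amounts to the same idea.
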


\begin{proof}
Suppose that $\Pi_{F}\subset\Pi_{\delta}(f)$. Let $\alpha_{0}\in S_{F}$. Since
$f(e^{\gamma},1)$ is nonzero for $\gamma\in\Pi_{F}$, by Lemma
\ref{stabilityNonzero} and the definition of $\phi$ it suffices to prove that
there exists $\gamma\in\Pi_{F}$ such that $\gamma+\alpha_{0}\in\Pi_{F}$ or
$\gamma-\alpha_{0}\in\Pi_{F}$. Since $S_{F}$ is $\delta$-admissible, $\dim F=\#
S_{F}$; $F=\Conv(w.\delta\mid w\in W_{S_{F}})$ and $\dim F=\# S_{F}$, thus
there exists $w\in W_{F}$ such that $\delta-w.\delta=\sum_{\alpha\in S_{F}%
}k_{\alpha}\alpha$ with $k_{\alpha_{0}}>0$. This implies the existence of
$\gamma\in\Pi_{F}$ such that $\gamma+\alpha_{0}\in\Pi_{F}$. Since $\Pi
_{F}\subset\Pi_{\delta}(f)$, $f(e^{\gamma+\alpha_{0}},1)\not =0$ and
$f(e^{\gamma},1)\not =0$, and thus
\[
\phi(\alpha_{0})=\frac{f(e^{\gamma},1)}{f(e^{\gamma+\alpha_{0}},1)}\not =0.
\]
Conversely, suppose that $\phi$ is nonzero on $S_{F}$. By Theorem
\ref{Description faces}, $\Pi_{F}=(\delta+\langle S_{F}\rangle)\cap\Pi
_{\delta}$. Since $f(e^{\delta},1)\not =0$ and $\phi$ is nonzero on $S_{F}$,
$f$ is nonzero $\Pi_{F}$ by Proposition \ref{algebraicDescription}.
\end{proof}

\bigskip

We turn now to the proof of Proposition \ref{descriptionNonZeroSet}.

\bigskip

\begin{proof}
We order the set of dominant faces by the inclusion order; note that the set
of dominant faces is a lattice with respect to this order, and we denote by
$F\vee F^{\prime}$ the supremum of two dominant faces $F$ and $F^{\prime}$:
$F\vee F^{\prime}$ is the smallest dominant face containing both $F$ and
$F^{\prime}$. Let $\gamma\in\Pi_{\delta}$ such that $f(e^{\gamma},1)\not =0$,
and let $F$ be the smallest dominant face containing $\gamma$. We denote by
$S_{F}$ the $\delta$-admissible subset of simple roots corresponding to $F$.
Then, $\gamma$ can be written as
\[
\gamma=\delta-\sum_{\alpha\in S_{F}}k_{\alpha}\alpha
\]
with $k_{\alpha}\in\mathbb{N}$. Necessarily, we have $k_{\alpha}>0$ for all
$\alpha\in S_{F}$. Otherwise, Lemma \ref{admissible-nonzerodecomp} would imply
that $\gamma$ belongs to a smaller dominant face of $K(\delta)$. Let
$(\gamma_{i})_{0\leq i\leq t}$ with $t=\sum_{\alpha\in S^{\prime}}k_{\alpha}$
be a sequence of $\Pi_{\delta}$ such that $\gamma_{i}\in\Pi_{\delta}$,
$\gamma_{0}=\gamma$, $\gamma_{t}=\delta$ and $\gamma_{i+1}-\gamma_{i}\in
S_{F}$. Since $f(e^{\gamma_{0}},1)\not =0$, Lemma \ref{stabilityNonzero}
yields that $f(e^{\gamma_{i}},1)\not =0$ for $0\leq i\leq t$. Let $\alpha\in
S_{F}$: since $k_{\alpha}>0$, a similar deduction as in the proof of the
previous lemma yields that there exists $0\leq i\leq t-1$ such that
$\gamma_{i+1}-\gamma_{i}=\alpha$. Therefore,
\[
\phi(e^{\alpha})=\frac{f(e^{\gamma_{i}},1)}{f(e^{\gamma_{i+1}},1)}\not =0.
\]
Since $\phi(e^{\alpha})\not =0$ for $\alpha\in S_{F}$, $f(e^{\gamma},1)$ is
nonzero on $\Pi_{\delta}\cap(\delta-\langle S_{F}\rangle)$, and $\Pi
_{F}\subset\Pi_{\delta}(f)$. We have thus proven that if a weight $\gamma$ is
in $\Pi_{\delta}(f)$, then the intersection of $\Pi_{\delta}$ with the
smallest dominant face containing $\gamma$ is also included in $\Pi_{\delta
}(f)$; hence, $\Pi_{\delta}(f)$ is an union of sets $\Pi_{F}$, where $F$ are
dominant faces. 

Let $F$ and $F^{\prime}$ be two dominant faces such
that $\Pi_{F},\Pi_{F^{\prime}}\subset\Pi_{\delta}(f)$, and let us show that
$\Pi_{F\vee F^{\prime}}\subset\Pi_{\delta}(f)$. Note first that
$F\vee F^{\prime}=F_{S_{F}\cup S_{F^{\prime}}}$: on the first hand, the
smallest vector space containing both $\langle S_{F}\rangle$ and $\langle
S_{F^{\prime}}\rangle$ is $\langle S_{F}\cup S_{F^{\prime}}\rangle$. On the
other hand, since $S_{F}$ and $S_{F^{\prime}}$ are $\delta$-admissible,
$S_{F}\cup S_{F^{\prime}}$ is again $\delta$-admissible. It suffices thus to
show that $\Pi_{F_{S_{F}\cup S_{F^{\prime}}}}\subset\Pi_{\delta}(f)$. But
Lemma \ref{faceNonzeroRootsNonZero} yields that $\phi(e^{\alpha})$ is nonzero
for $\alpha\in S_{F}$ and $\alpha\in S_{F^{\prime}}$. Thus, $\phi(e^{\alpha})$
is nonzero for $\alpha\in S_{F}\cup S_{F^{\prime}}$ and $\Pi_{F_{S_{F}\cup
S_{F^{\prime}}}}\subset\Pi_{\delta}(f)$. Let $F_{0}$ be the supremum of
$\{F\text{ dominant face of }K(\delta),\Pi_{F}\subset\Pi_{\delta}(f)\}$. By
the previous argument, $\Pi_{F_{0}}\subset\Pi_{\delta}(f)$. Let $\gamma\in
\Pi_{\delta}(f)$ and let $F$ be the smallest dominant face of $K(\delta)$
containing $\gamma$. By the first part of the proof, $\Pi_{F}\subset
\Pi_{\delta}(f)$. Thus $F\subset F_{0}$ and $\gamma\in F_{0}$: this proves
that $\Pi_{\delta}(f)\subset\Pi_{F_{0}}$, and finally $\Pi_{\delta}%
(f)=\Pi_{F_{0}}$.
\end{proof}

\begin{corollary}
\label{unicity} Let $f\in \Mult(\hat{T}_{\delta})^{+}$ be such that $M_{f}%
\in\Delta$. There exists a unique $\phi\in \Mult(\mathbb{R}[Q^{+}])^{+}$ such
that $\Phi(\phi,\Id)=f$ and $\{\alpha,\phi(e^{\alpha})\not =0\}$ is a $\delta
$-admissible subset of $S$.
\end{corollary}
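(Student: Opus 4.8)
The plan is to get both halves of the statement from ingredients already in place: the explicit $\phi$ built inside the proof of Proposition~\ref{algebraicDescription} (legitimate here since $M_f\in\Delta$, so we may take $w=\Id$), the identification $\Pi_\delta(f)=\Pi_{F_0}$ of Proposition~\ref{descriptionNonZeroSet}, and the correspondence of Theorem~\ref{Description faces}. The basic relation I use repeatedly is $f(e^\gamma,1)=f(e^\delta,1)\,\phi(e^{\delta-\gamma})$ for $\gamma\in\Pi_\delta$, from Proposition~\ref{algebraicDescription}, together with $f(e^\delta,1)\neq0$ (Lemma~\ref{stabilityNonzero}). For any multiplicative nonnegative $\phi$, writing $T_\phi=\{\alpha\in S\mid\phi(e^\alpha)\neq0\}$, we have $\phi(e^{\delta-\gamma})\neq0$ iff the support of $\delta-\gamma$ lies in $T_\phi$, i.e.\ iff $\gamma\in(\delta+\langle T_\phi\rangle)\cap\Pi_\delta$; hence $\Pi_\delta(f)=(\delta+\langle T_\phi\rangle)\cap\Pi_\delta$, which by Theorem~\ref{Description faces} equals $\Pi_{F_{T_\phi}}$ whenever $T_\phi$ is $\delta$-admissible.

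\emph{Existence.} Take the $\phi$ produced by the proof of Proposition~\ref{algebraicDescription}, so that $\Phi(\phi,\Id)=f$; I only need to check that $T_\phi$ is $\delta$-admissible. Let $F_0$ be the dominant face with $\Pi_\delta(f)=\Pi_{F_0}$ (Proposition~\ref{descriptionNonZeroSet}) and $S_{F_0}$ the corresponding $\delta$-admissible subset. Lemma~\ref{faceNonzeroRootsNonZero} gives $S_{F_0}\subseteq T_\phi$. Conversely, if $\phi(e^{\alpha_0})\neq0$ then, by the very definition of $\phi$, there is $\gamma\in\Pi_\delta$ with $f(e^\gamma,1)\neq0$, $\gamma-\alpha_0\in\Pi_\delta$ and $\phi(e^{\alpha_0})=f(e^{\gamma-\alpha_0},1)/f(e^\gamma,1)\neq0$; hence $f(e^{\gamma-\alpha_0},1)\neq0$, so $\gamma$ and $\gamma-\alpha_0$ both lie in $\Pi_\delta(f)=\Pi_{F_0}=(\delta+\langle S_{F_0}\rangle)\cap\Pi_\delta$, forcing $\alpha_0=\gamma-(\gamma-\alpha_0)\in\langle S_{F_0}\rangle$ and thus $\alpha_0\in S_{F_0}$ ($S$ being a basis). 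So $T_\phi=S_{F_0}$ is $\delta$-admissible.

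\emph{Uniqueness.} Let $\phi$ be any element of $\Mult(\mathbb{R}[Q^+])^+$ with $\Phi(\phi,\Id)=f$ and $T_\phi$ $\delta$-admissible. By the first paragraph $\Pi_\delta(f)=\Pi_{F_{T_\phi}}$, while also $\Pi_\delta(f)=\Pi_{F_0}=\Pi_{F_{S_{F_0}}}$; since the vertices of a face $F_{S'}$ lie among the vertices $W\delta\subseteq\Pi_\delta$ of $K(\delta)$ we have $F_{S'}=\Conv(\Pi_{F_{S'}})$, so $\Pi_{F_{S'}}$ determines $F_{S'}$ and, by Theorem~\ref{Description faces}, determines $S'$; hence $T_\phi=S_{F_0}=:S'$, a set depending only on $f$. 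Thus $\phi(e^\alpha)=0$ for $\alpha\notin S'$. For $\alpha\in S'$: as $S'$ is $\delta$-admissible, the connected component of $\alpha$ in the sub-Dynkin diagram on $S'$ contains a simple root not orthogonal to $\delta$, so there is a simple path $\alpha=\beta_0,\beta_1,\dots,\beta_m$ inside $S'$ (consecutive vertices adjacent) with $\beta_m$ not orthogonal to $\delta$. For $0\le i\le m$ the set $\{\beta_i,\dots,\beta_m\}$ is connected, so $\theta_i:=\beta_i+\dots+\beta_m$ is a positive root (standard fact), and $(\delta,\theta_i^\vee)$ is a positive integer because $(\delta,\theta_i)=\sum_{j\ge i}(\delta,\beta_j)\ge(\delta,\beta_m)>0$; therefore $\mu_i:=\delta-\theta_i$ lies on the $\theta_i$-string through $\delta$ (as $\delta+\theta_i$ exceeds the highest weight), so $\mu_i\in\Pi_\delta$, and since $\theta_i\in\langle S'\rangle$ also $\mu_i\in(\delta+\langle S'\rangle)\cap\Pi_\delta=\Pi_\delta(f)$. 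Setting $\mu_{m+1}:=\delta$, the relation $f(e^{\mu_i},1)=f(e^\delta,1)\prod_{j=i}^m\phi(e^{\beta_j})$ gives $f(e^{\mu_i},1)\neq0$ and $\phi(e^{\beta_i})=f(e^{\mu_i},1)/f(e^{\mu_{i+1}},1)$ for $0\le i\le m$; in particular $\phi(e^\alpha)=\phi(e^{\beta_0})$ is a ratio of values of $f$, hence determined. So $\phi$ is unique.

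\emph{Main obstacle.} The only genuinely delicate step is recovering $\phi(e^\alpha)$ when $\alpha$ is orthogonal to $\delta$ — exactly the degeneracy flagged at the end of Subsection~\ref{CharacMult}. It is precisely $\delta$-admissibility of $T_\phi=S'$ that supplies the Dynkin path back to a root not orthogonal to $\delta$, and the standard facts that the sum over a connected set of simple roots is a positive root and that $\delta-\theta\in\Pi_\delta$ for a positive root $\theta$ with $(\delta,\theta^\vee)\ge1$ then let $f$ "see" $\phi(e^\alpha)$. Everything else is bookkeeping with Propositions~\ref{algebraicDescription} and \ref{descriptionNonZeroSet} and Theorem~\ref{Description faces}.
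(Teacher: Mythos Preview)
Your proof is correct and follows essentially the same strategy as the paper's: identify $T_\phi$ with the $\delta$-admissible set $S_{F_0}$ attached to $\Pi_\delta(f)$ via Proposition~\ref{descriptionNonZeroSet} and Theorem~\ref{Description faces}, then pin down the values $\phi(e^\alpha)$ on this set from $f$. Your identification $\Pi_\delta(f)=\Pi_{F_{T_\phi}}$ (direct, from $f(e^\gamma,1)=f(e^\delta,1)\phi(e^{\delta-\gamma})$) is a clean variant of the paper's contradiction argument ruling out $\delta$-admissible $S'\supsetneq S_{F_0}$.

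The one place you work harder than necessary is the determination of $\phi(e^\alpha)$ for $\alpha\in S'$. The paper simply observes (already inside the proof of Lemma~\ref{faceNonzeroRootsNonZero}) that for each $\alpha\in S_{F_0}$ there exist $\gamma,\gamma-\alpha\in\Pi_{F_0}=\Pi_\delta(f)$, so $\phi(e^\alpha)=f(e^{\gamma-\alpha},1)/f(e^\gamma,1)$ is forced. Your Dynkin-path construction of explicit weights $\mu_i=\delta-\theta_i$ via root strings achieves the same thing but with extra machinery; it is correct (the key facts---that a sum along a simple path of adjacent simple roots is a positive root, and that $(\delta,\theta_i^\vee)\ge1$ forces $\delta-\theta_i\in\Pi_\delta$---are standard), just more than is needed here.
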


\begin{proof}
Let $\phi$ be such that $\Phi(\phi,\Id)=f$. By Proposition
\ref{descriptionNonZeroSet}, there exists a face $F$ of $K(\delta)$ such that
$\Pi_{F}=\Pi_{\delta}(f)$. Lemma \ref{faceNonzeroRootsNonZero} yields that
$\phi$ is nonzero on $S_{F}$. Let $\alpha\in S_{F}$: then, there exists
$\gamma\in\Pi_{\delta}$ such that $\gamma\in\Pi_{F},\gamma-\alpha\in\Pi_{F}$;
thus, $f(e^{\gamma},1)\not =0$ and $f(e^{\gamma-\alpha},1)\not =0$. Therefore,
the value of $\phi$ on $\alpha$ has to be equal to $\frac{f(e^{\gamma-\alpha
},1)}{f(e^{\gamma},1)}$. Hence, there exists at most one $\phi$ such that
$\{\alpha,\phi(e^{\alpha})\not =0\}$ is the $\delta$-admissible subset $S_{F}%
$. Such a map $\phi$ exists, since $f$ is zero on $\Pi_{\delta}\setminus
\Pi_{\delta}(f)$. Suppose that there exists a bigger $\delta
$-admissible subset $S_{F}\subsetneq S^{\prime}$ such that $\phi$ is nonzero
on $S^{\prime}$. Then by Lemma \ref{faceNonzeroRootsNonZero}, $\Pi
_{F_{S^{\prime}}}\subset\Pi_{\delta}(f)$. But by Theorem
\ref{Description faces}, there is a bijection between dominant faces and
$\delta$-admissible subsets: therefore, $\Pi_{\delta}(f)=\Pi_{F}\subsetneq
\Pi_{F_{S^{\prime}}}\subset\Pi_{\delta}(f)$, which is a contradiction. Thus,
there exists exactly one map $\phi$ such that $\Phi(\phi,\Id)=f$ and
$\{\alpha\in S,\phi(e^{\alpha})\not =0\}$ is a $\delta$-admissible subset (and
this $\delta$-admissible subset has to be $S_{F}$).
\end{proof}

\subsection{Identification of the minimal boundary}

We give in this subsection a complete description of the minimal boundary by
describing $\Mult(\hat{T}_{\delta})^{+}$.

\begin{lemma}
\label{Prop_reduc[0,1]}Let $f\in \Mult(\hat{T}_{\delta})^{+}$ be such that
$M_{f}\in\Delta$, and let $\phi\in \Mult(\mathbb{R}[Q^{+}])^{+}$ be such that
$\Phi(\phi,\Id)=f$. Then $\phi(e^{\alpha})\in[0,1]$.
\end{lemma}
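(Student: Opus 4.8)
The plan is to use the hypothesis $M_f\in\Delta$ in the form of the inequalities $\langle M_f,\alpha^\vee\rangle\ge 0$, one for each simple root $\alpha$; these cut out $\Delta$, since $\Delta=\bigoplus_i\mathbb{R}_{\ge 0}\omega_i$ and $\langle\omega_i,\alpha_j^\vee\rangle=\delta_{ij}$. As $\phi\in\Mult(\mathbb{R}[Q^+])^+$ is already nonnegative, it suffices to show $\phi(e^\alpha)\le 1$ for each $\alpha\in S$, and I would argue by contradiction, assuming $q:=\phi(e^\alpha)>1$.

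The main work is to expand $\langle M_f,\alpha^\vee\rangle$ after grouping the weights of $V(\delta)$ (counted with multiplicity) into $\alpha$-root strings. Write such a string as $\sigma=\{\mu_\sigma-k\alpha\mid 0\le k\le p_\sigma\}$, with $\mu_\sigma$ its top and $p_\sigma=\langle\mu_\sigma,\alpha^\vee\rangle$, so $\langle\mu_\sigma-k\alpha,\alpha^\vee\rangle=p_\sigma-2k$. Two facts combine: the explicit formula of Proposition \ref{algebraicDescription} (with $w=\Id$) together with multiplicativity of $\phi$ gives $f(e^{\mu_\sigma-k\alpha},1)=f(e^{\mu_\sigma},1)\,q^k$, while $W$-invariance of the weight multiplicities of $V(\delta)$ gives $K_{\delta,\mu_\sigma-k\alpha}=K_{\delta,\mu_\sigma-(p_\sigma-k)\alpha}$. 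Summing over strings yields
\[
\langle M_f,\alpha^\vee\rangle=\sum_\sigma f(e^{\mu_\sigma},1)\,g_\sigma(q),\qquad g_\sigma(q)=\sum_{k=0}^{p_\sigma}K_{\delta,\mu_\sigma-k\alpha}\,(p_\sigma-2k)\,q^k.
\]
Pairing the $k$-th and $(p_\sigma-k)$-th summands and using the multiplicity symmetry gives $g_\sigma(q)=\sum_{0\le k<p_\sigma/2}K_{\delta,\mu_\sigma-k\alpha}(p_\sigma-2k)\,q^k(1-q^{p_\sigma-2k})$, so $g_\sigma(q)\le 0$ whenever $q>1$, and in fact $g_\sigma(q)<0$ when $p_\sigma\ge 1$ (the $k=0$ term equals $p_\sigma(1-q^{p_\sigma})<0$, since $K_{\delta,\mu_\sigma}=1$). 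Since $f$ is nonnegative on the positive cone of $\hat T_\delta$, each $f(e^{\mu_\sigma},1)\ge 0$, hence $\langle M_f,\alpha^\vee\rangle\le 0$.

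It remains to upgrade this to a strict inequality, which is the only slightly delicate point: I need one $\alpha$-string $\sigma_0$ with $p_{\sigma_0}\ge 1$ and $f(e^{\mu_{\sigma_0}},1)\neq 0$. If no $\gamma\in\Pi_\delta$ satisfies $\gamma-\alpha\in\Pi_\delta$ and $f(e^\gamma,1)\neq 0$, then by the construction of $\phi$ in the proof of Proposition \ref{algebraicDescription} (equivalently, by Corollary \ref{unicity}) we have $\phi(e^\alpha)=0$ and there is nothing to prove; otherwise pick such a $\gamma$. Its $\alpha$-string $\sigma_0$ contains both $\gamma$ and $\gamma-\alpha$, so $p_{\sigma_0}\ge 1$, and applying the contrapositive of Lemma \ref{stabilityNonzero} repeatedly going up $\sigma_0$ shows $f(e^{\mu_{\sigma_0}},1)\neq 0$. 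Then $\langle M_f,\alpha^\vee\rangle\le f(e^{\mu_{\sigma_0}},1)\,g_{\sigma_0}(q)<0$, contradicting $M_f\in\Delta$. Hence $q=\phi(e^\alpha)\le 1$, so $\phi(e^\alpha)\in[0,1]$. I expect the hard part to be recognizing that the $\alpha$-string decomposition, combined with the $s_\alpha$-symmetry of weight multiplicities, is precisely what makes the sign of $g_\sigma(q)$ transparent; the elementary sign estimate on $g_\sigma$ and the bookkeeping via Lemma \ref{stabilityNonzero} needed for strictness are then routine.
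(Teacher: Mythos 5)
Your proof is correct and follows essentially the same route as the paper: you expand $\langle M_f,\alpha^\vee\rangle$ using $f(e^{\gamma},1)=f(e^{\delta},1)\phi(e^{\delta-\gamma})$ and pair each weight with its image under $s_{\alpha}$ (your within-string pairing of $k$ with $p_{\sigma}-k$ is exactly this reflection pairing), so that $\phi(e^{\alpha})>1$ forces $\langle M_f,\alpha^\vee\rangle<0$, contradicting $M_f\in\Delta$. Two minor remarks: your explicit strictness step via Lemma \ref{stabilityNonzero} (producing a string top $\mu_{\sigma_0}$ with $f(e^{\mu_{\sigma_0}},1)\neq 0$ and $p_{\sigma_0}\geq 1$) is a welcome addition, since the paper asserts the strict inequality without comment, while your parenthetical claim that $K_{\delta,\mu_{\sigma}}=1$ is not true in general for the top of an $\alpha$-string but is also not needed --- only $K_{\delta,\mu_{\sigma}-k\alpha}\geq 1$ enters the sign argument.
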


\begin{proof}
Let $f\in \Mult(\hat{T}_{\delta})^{+}$ be such that
\[
M=\sum_{\gamma\in\Pi_{\delta}}K_{\delta,\gamma}f(e^\gamma,1)\gamma\in\Delta.
\]
Let $\phi\in \Mult(\mathbb{R}[Q^{+}])^{+}$ be a morphism associated with $f$ by
Proposition \ref{algebraicDescription}, and let $\alpha\in S$. Since
$M\in\Delta$, $\langle M,\alpha\rangle\geq0$. Moreover,
\[
\langle M,\alpha\rangle=\sum_{\gamma\in\Pi_{\delta}}K_{\delta,\gamma}f(e^{\delta}%
,1)\phi(e^{\delta-\gamma})\langle\gamma,\alpha\rangle.
\]
By invariance of $\Pi_{\delta}$ under the symmetry $s_{\alpha}$, $\gamma\in
\Pi_{\delta}$ implies that $s_{\alpha}(\gamma)\in\Pi_{\delta}$. Since
$s_{\alpha}^{2}=\Id$ and since $s_{\alpha}(\gamma)=\gamma$ if and only
if$\langle\alpha,\gamma\rangle=0$, we have
\begin{align*}
\langle M,\alpha\rangle=  &  f(e^{\delta},1)\sum_{\substack{\gamma\in
\Pi_{\delta}\\\langle\gamma,\alpha\rangle>0}}K_{\delta,\gamma}(\phi(e^{\delta-\gamma}%
)-\phi(e^{\delta-s_{\alpha}(\gamma)}))\langle\gamma,\alpha\rangle\\
=  &  f(e^{\delta},1)\sum_{\substack{\gamma\in\Pi_{\delta}\\\langle
\gamma,\alpha\rangle>0}}K_{\delta,\gamma}\phi(e^{\delta-\gamma})(1-\phi(e^{\alpha}%
)^{\frac{2\langle\gamma,\alpha\rangle}{\langle\alpha,\alpha\rangle}}%
)\langle\gamma,\alpha\rangle.
\end{align*}
If $\phi(e^{\alpha})>1$, then $(1-\phi(e^{\alpha})^{\frac{2\langle
\gamma,\alpha\rangle}{\langle\alpha,\alpha\rangle}})<0$ for all $\gamma\in
\Pi_{\delta}$ such that $\langle\gamma,\alpha\rangle>0$, and thus $\langle
M,\alpha\rangle<0$: this would contradict the choice of $f$. Therefore,
$\phi(e^{\alpha})\leq1$.
\end{proof}

The set $\{1,\dots,d\}$ is identified with $S$ by ordering the set of
simple roots, and for $S^{\prime}\subset S$, we denote by $W^{S^{\prime}}$ the
set of minimal right-coset representatives with respect to $S'$: namely,
\[
W^{S^{\prime}}=\{w\in W|l(sw)>l(w)\text{ for }s\in S^{\prime}\}.
\]
For $x\in\Delta$, we denote by $S_{x}$ the set $\{\alpha\in S,\langle
\alpha,x\rangle=0\}$.

\begin{lemma}
\label{unicityRightCoset} Let $x\in\mathbb{R}^{d}$ and let $y$ be the unique
element of $Wx$ belonging to $\Delta$. There exists a unique element $w\in
W^{S_{y}}$ such that $wx=y$.
\end{lemma}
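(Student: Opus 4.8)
The plan is to recall the standard fact that the Weyl group $W$ acts on $\mathbb{R}^d$ with the closed fundamental chamber $\Delta$ as a fundamental domain, so existence of some $w$ with $wx = y$ is immediate; the real content is the uniqueness once we restrict to the coset representative set $W^{S_y}$. First I would observe that the stabilizer of $y$ in $W$ is exactly the parabolic subgroup $W_{S_y}$ generated by the reflections $s_\alpha$ with $\alpha \in S_y$ — this is the classical result that point stabilizers in the chamber are parabolic (see e.g. Bourbaki or Humphreys). Hence if $w_1 x = w_2 x = y$, then $w_2^{-1} w_1 \in W_{S_y}$, so $w_1$ and $w_2$ lie in the same left coset $w_1 W_{S_y} = w_2 W_{S_y}$.

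Next I would invoke the general theory of minimal coset representatives: for any subset $S' \subset S$, each left coset $w W_{S'}$ contains a unique element of minimal length, and this element is characterized precisely by the condition $l(s w) > l(w)$ for all $s \in S'$ — which is the defining property of $W^{S'}$ as written in the excerpt (note the excerpt's $W^{S'}$ is defined via $l(sw) > l(w)$, so these are minimal representatives of \emph{left} cosets $wW_{S'}$, consistent with the multiplication $wx$ on the right of $W_{S_y}$-cosets). Applying this with $S' = S_y$ and the coset $w_1 W_{S_y}$ of the chosen $w_1$, there is exactly one element $w \in W^{S_y}$ in that coset, and since $W_{S_y}$ fixes $y$ we still have $wx = y$. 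This gives existence of $w \in W^{S_y}$ with $wx = y$.

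For uniqueness: suppose $w, w' \in W^{S_y}$ both satisfy $wx = w'x = y$. By the stabilizer fact above, $w^{-1} w' \in W_{S_y}$, so $w$ and $w'$ lie in the same left coset modulo $W_{S_y}$; since each such coset meets $W^{S_y}$ in exactly one point, $w = w'$. This completes the argument.

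The main obstacle — really the only point requiring care — is making sure the conventions match: one must check that the excerpt's definition $W^{S'} = \{w \mid l(sw) > l(w),\ s \in S'\}$ indeed picks out minimal representatives of the cosets $wW_{S'}$ (as opposed to $W_{S'}w$), so that the decomposition relevant to the right action $wx$ on $\Delta$ is the correct one, and that the parabolic subgroup $W_{S_y}$ is exactly $\operatorname{Stab}_W(y)$ rather than merely containing it. Both are standard facts about Coxeter groups, so once the bookkeeping is fixed the proof is short; I would cite a standard reference (Bourbaki, \emph{Groupes et algèbres de Lie}, Ch.~IV--VI, or Humphreys, \emph{Reflection Groups and Coxeter Groups}) for the stabilizer statement and for the existence/uniqueness of minimal coset representatives.
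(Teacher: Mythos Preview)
Your overall strategy is exactly the paper's: identify $\operatorname{Stab}_W(y)$ with the parabolic subgroup $W_{S_y}$, observe that $\{w:wx=y\}$ is a single coset of $W_{S_y}$, and then invoke the uniqueness of minimal-length coset representatives. So in substance the proposal matches.

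However, the one point you flagged as requiring care --- the left/right bookkeeping --- is reversed in two places. First, from $w_1x=w_2x=y$ you get $w_1w_2^{-1}y=y$, hence $w_1w_2^{-1}\in W_{S_y}$ (not $w_2^{-1}w_1$), so $w_1,w_2$ lie in the same \emph{right} coset $W_{S_y}w_1=W_{S_y}w_2$, not the same left coset. Second, the condition $l(sw)>l(w)$ for all $s\in S'$ characterizes the minimal-length representatives of \emph{right} cosets $W_{S'}\backslash W$, not of left cosets $wW_{S'}$ (for left cosets the condition is $l(ws)>l(w)$). The paper itself says ``minimal right-coset representatives'' when introducing $W^{S'}$, consistent with this.

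These two sign errors cancel, so your conclusion survives; but since you correctly identified this as the only delicate step, it is worth getting straight: the set $\{w:wx=y\}$ is a right coset of $W_{S_y}$, and $W^{S_y}$ as defined in the paper meets each right coset exactly once.
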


\begin{proof}
Let $W_{y}$ be the parabolic subgroup generated by $S_{y}$. Then, $W_{y}$ is
the stabilizer of $y$. In particular, the set $\lbrace w\in W,w(y)=x\rbrace$ is a left coset of
$W_{y}$ in $W$, and thus the set $\lbrace w\in W,w(x)=y\rbrace$ is a right
coset of $W_{y}$ in $W$. By \cite[1.10]{Hum}, there exists a unique $\tilde
{w}\in W^{S_{y}}$ such that $\lbrace w\in W,w(x)=y\rbrace\tilde{w}wW_{y}$. Thus, there
exists a unique $\tilde{w}\in W^{S_{y}}$ such that $\tilde{w}(x)=y$.
\end{proof}

For each $d$-tuple $\vec{t}=(t_{1},\dots,t_{d})$, denote by
$\mathbf{0}^{c}(\vec{t})$ the set of indices $i$ such that $t_{i}\not =0$ and
by $\mathbf{1}(\vec{t})$ the set of indices $i$ such that $t_{i}=1$. Now
consider the set $[0,1]_{\delta}^{d}$ such that
\begin{equation}
\lbrack0,1]_{\delta}^{d}:=\{\vec{t}\in\lbrack0,1]^{d}\mid\mathbf{0}^{c}(\vec{t})\text{ is $\delta$-admissible}\}. \label{def[0,1]delta}%
\end{equation}
This set will turn out to be a natural parametrization of $K(\delta)^{+}$.
Then, we will prove in Section \ref{SectionMean} that there exists a natural map
$t:K(\delta)\longrightarrow[0,1]_{\delta}^{d}\times W$, written as
$t(m)=(\vec{t}_{m},w_{m})$, such that $t(K(\delta)^{+})=[0,1]_{\delta}^{d}\times
\Id$.

\begin{proposition}
\label{isomosphismeMult} The map $\Phi\circ\theta^{-1}$ yields a bijection
$\Psi$ between $\Mult(\hat{T}_{\delta})^{+}$ and
\[
\lbrace(\vec{t},w)\in[0,1]_{\delta}^{d}\times W\vert w\in W^{\mathbf{1}(\vec{t}%
)}\rbrace.
\]

\end{proposition}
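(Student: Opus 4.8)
The plan is to combine the surjectivity result of Proposition \ref{algebraicDescription} with the uniqueness statement of Corollary \ref{unicity} and the bound of Lemma \ref{Prop_reduc[0,1]}, and then to deal with the remaining Weyl-group redundancy using Lemma \ref{unicityRightCoset}. First I would recall that $\Phi\circ\theta^{-1}$ sends a pair $(\vec t,w)\in(\mathbb R^+)^d\times W$ to the multiplicative function $f$ with $f(e^\gamma,n)=\phi_{\vec t}(e^{n\delta-w(\gamma)})/\phi_{\vec t}(S_\delta)^n$, where $\phi_{\vec t}=\theta^{-1}(\vec t)$; by Proposition \ref{algebraicDescription} this map is surjective onto $\Mult(\hat T_\delta)^+$, so only injectivity and the identification of the domain remain.

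For surjectivity onto the stated subset, start with $f\in\Mult(\hat T_\delta)^+$ and let $M_f=\sum_\gamma K_{\delta,\gamma}f(e^\gamma,1)\gamma$. Choose $w\in W$ with $w^{-1}(M_f)=y\in\Delta$, so that $M_{f\circ w}\in\Delta$; by Lemma \ref{Prop_reduc[0,1]} the associated $\phi$ satisfies $\phi(e^\alpha)\in[0,1]$, hence $\vec t:=\theta(\phi)\in[0,1]^d$, and by Corollary \ref{unicity} we may normalize $\phi$ so that $\mathbf 0^c(\vec t)=\{\alpha:\phi(e^\alpha)\neq 0\}$ is $\delta$-admissible, i.e. $\vec t\in[0,1]_\delta^d$. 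It remains to see that $w$ can be chosen in $W^{\mathbf 1(\vec t)}$: here I would identify $\mathbf 1(\vec t)$ with $S_y$, the stabilizing simple roots of $y=w^{-1}(M_f)$. The key geometric input is that $\phi(e^{\alpha})=1$ forces $\alpha\perp M_{f\circ w}$ (from the displayed formula for $\langle M,\alpha\rangle$ in the proof of Lemma \ref{Prop_reduc[0,1]}, the summand for $\alpha$ vanishes exactly when $\phi(e^\alpha)=1$, giving $\langle M_{f\circ w},\alpha\rangle=0$), and conversely that $\langle M_{f\circ w},\alpha\rangle=0$ forces all nonzero-$\phi$ contributions along $\alpha$ to cancel, hence $\phi(e^\alpha)=1$ provided $\alpha\in\mathbf 0^c(\vec t)$ — and one checks $\mathbf 1(\vec t)\subset\mathbf 0^c(\vec t)$. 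Then Lemma \ref{unicityRightCoset} applied to $x=M_f$ gives a \emph{unique} $w\in W^{S_y}=W^{\mathbf 1(\vec t)}$ with $w^{-1}(M_f)=y$, yielding a pair in the target set mapping to $f$.

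For injectivity, suppose $(\vec t,w)$ and $(\vec t',w')$ in the target set give the same $f$. Evaluating on $(e^\gamma,1)$ for $\gamma\in\Pi_\delta$ shows $\Pi_\delta(f)$ determines $\mathbf 0^c(\vec t)$ via Proposition \ref{descriptionNonZeroSet} and Corollary \ref{unicity}; one then recovers $M_f=\sum K_{\delta,\gamma}f(e^\gamma,1)\gamma$, and the cancellation analysis above shows $\mathbf 1(\vec t)=S_y$ where $y$ is the dominant representative of $WM_f$ — so $\mathbf 1(\vec t)=\mathbf 1(\vec t')$ and $S_y$ is intrinsic to $f$. Since $w^{-1}(M_f)=y=w'^{-1}(M_f)$ with both $w,w'\in W^{S_y}$, Lemma \ref{unicityRightCoset} gives $w=w'$; and with $w$ fixed, Corollary \ref{unicity} gives uniqueness of the normalized $\phi$, hence $\vec t=\vec t'$. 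I expect the main obstacle to be the careful bookkeeping identifying $\mathbf 1(\vec t)$ with $S_y$ — i.e. showing both inclusions between "$\phi(e^\alpha)=1$" and "$\alpha$ stabilizes the drift $M_{f\circ w}$" — since the backward direction requires knowing $\alpha\in\mathbf 0^c(\vec t)$ and using the $\delta$-admissibility of $\mathbf 0^c(\vec t)$ together with Theorem \ref{Description faces} to ensure there is a weight $\gamma\in\Pi_\delta(f)$ with $\gamma\pm\alpha\in\Pi_\delta(f)$, so that $\phi(e^\alpha)$ is actually computed as a ratio of nonzero values of $f$.
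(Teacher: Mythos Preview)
Your approach is essentially the same as the paper's: both combine Proposition \ref{algebraicDescription}, Corollary \ref{unicity}, Lemma \ref{Prop_reduc[0,1]}, and Lemma \ref{unicityRightCoset}, and the crux in both is the identification $\mathbf{1}(\vec t)=S_y$ via the computation
\[
\langle w(M_f),\alpha\rangle=\frac{1}{\phi(S_\delta)}\sum_{\substack{\gamma\in\Pi_\delta\\ \langle\gamma,\alpha\rangle>0}}K_{\delta,\gamma}\,\phi(e^{\delta-\gamma})\bigl(1-\phi(e^\alpha)^{2\langle\gamma,\alpha\rangle/\langle\alpha,\alpha\rangle}\bigr)\langle\gamma,\alpha\rangle,
\]
which the paper writes out explicitly and from which it reads off the biconditional directly. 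Your anticipation that the backward implication needs a weight $\gamma\in\Pi_\delta(f)$ with $\langle\gamma,\alpha\rangle>0$ is a fair point the paper leaves implicit.

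One bookkeeping issue to fix: you mix conventions. You correctly recall that $\Phi(\phi_{\vec t},w)$ involves $e^{n\delta-w(\gamma)}$, but then you pick $w$ with $w^{-1}(M_f)=y\in\Delta$ and invoke Lemma \ref{unicityRightCoset} to get $w\in W^{S_y}$ with $w^{-1}(M_f)=y$. The lemma actually produces the unique $w\in W^{S_y}$ with $w(M_f)=y$, and it is this $w$ (not its inverse) that appears in $\Phi(\phi,w)=f$, since $M_{f\circ w^{-1}}=w(M_f)$. The paper keeps the convention $w(M_f)=y$ throughout; aligning yours with it will make the injectivity step (where you want $w,w'\in W^{S_y}$ both sending $M_f$ to $y$) go through cleanly.
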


\begin{proof}
Let $f\in \Mult(\hat{T}_{\delta})^{+}$. Let $y$ be the unique point in $W(M_{f})\cap\Delta$ and denote
by $S_{y}$ the set $\{\alpha\in S\mid\langle\alpha,y\rangle=0\}$. By Lemma
\ref{unicityRightCoset}, there exists a unique $w\in W^{S_{y}}$ such that
$w(M_{f})=y$. Thus, by Proposition \ref{algebraicDescription}, Corollary
\ref{unicity} and Lemma \ref{Prop_reduc[0,1]}, there exists a unique $\phi\in
\Mult(\mathbb{R}[Q^{+}])^{+}$ such that $\Phi(\phi,w)=f$ and $\{\alpha\in
S\mid\phi(e^{\alpha})\neq0\}$ is a $\delta$-admissible subset. In order to
conclude, we just have to show that $\phi(e^{\alpha})=1$ if and only if
$\langle\alpha,w(M_{f})\rangle=0$: but, as in the proof of Lemma
\ref{Prop_reduc[0,1]}, we have
\begin{align*}
\langle\alpha,w(M_{f})\rangle=  &  \langle w^{-1}(\alpha),M_{f}\rangle
=\sum_{\gamma\in\Pi_{\delta}}K_{\delta,\gamma}f(e^{\gamma},1)\langle
w^{-1}(\alpha),\gamma\rangle\\
=  &  \sum_{\gamma\in\Pi_{\delta}}K_{\delta,\gamma}\frac{1}{\phi(S_{\delta}%
)}\phi(e^{\delta-w(\gamma)})\langle w^{-1}(\alpha),\gamma\rangle=\sum
_{\gamma\in\Pi_{\delta}}K_{\delta,\gamma}\frac{1}{\phi(S_{\delta})}%
\phi(e^{\delta-w(\gamma)})\langle\alpha,w(\gamma)\rangle\\
=  &  \sum_{\gamma\in\Pi_{\delta}}K_{\delta,\gamma}\frac{1}{\phi(S_{\delta}%
)}\phi(e^{\delta-\gamma})\langle\alpha,\gamma\rangle=\frac{1}{\phi(S_{\delta
})}\sum_{\substack{\gamma\in\Pi_{\delta}\\\langle\gamma,\alpha\rangle>0}%
}K_{\delta,\gamma}(\phi(e^{\delta-\gamma})-\phi(e^{\delta-s_{\alpha}(\gamma)}))\langle
\alpha,\gamma\rangle\\
=  &  \frac{1}{\phi(S_{\delta})}\sum_{\substack{\gamma\in\Pi_{\delta}%
\\\langle\gamma,\alpha\rangle>0}}K_{\delta,\gamma}\phi(e^{\delta-\gamma})(1-\phi(e^{\alpha
})^{\frac{2\langle\gamma,\alpha\rangle}{\langle\alpha,\alpha\rangle}}%
)\langle\alpha,\gamma\rangle,\\
&
\end{align*}
where the fourth inequality is due to the fact that $w$ yields a bijection on
the set of weights which satisfies the relation $K_{\delta,w(\gamma)}=K_{\delta,\gamma}%
$ for each $\gamma\in\Pi_{\delta}$. Thus, $\langle M_{f},\alpha\rangle=0$ if and only $\phi(e^{\alpha})=1$.
\end{proof}

Note that the bijection $\Psi$ in the above proposition is explicitly
given by Proposition \ref{algebraicDescription}: for $\vec{t}\in
[0,1]_{\delta}^{d}$, denote by $\phi_{\vec{t}}$ the unique element of
$\Mult(\mathbb{R}[Q^{+}])^{+}$ such that $\{\phi_{t}(\alpha)\not =0\}$ is
$\delta$- admissible and $\Phi(\phi_{t},w)=\Psi(\vec{t},w)$. Then,
\[
\Psi(\vec{t},w)(e^{\gamma},n)=\frac{1}{\phi_{\vec{t}}(S_{\delta})^{n}}%
\phi_{\vec{t}}(e^{n\delta-w(\gamma}))=\frac{\vec{t}^{n\delta-w(\gamma)}%
}{S_{\delta}(\vec{t})^{n}},
\]
for $(e^{\gamma},n)\in\hat{T}_{\delta}$.

\begin{remark}
The restriction of the set of parameters $(\vec{t},w)$ from $[0,1]^{d}\times
W$ to $\{(\vec{t},w)\in\lbrack0,1]_{\delta}^{d}\times W|w\in W^{\mathbf{1}(\vec
{t})}\}$ is only useful to ensure the injectivity of the map $\Psi$. It is
however still possible to define an element of $\Mult(\hat{T}_{\delta})^{+}$ by
applying the map $\Phi\circ\theta^{-1}$ to any element $(\vec{t},w)$. 
The lack of injectivity without the restriction of the parameters can be seen
in the following example: consider the Lie algebra of type $A_{2}$ with set of simple
roots $\{\alpha_{1},\alpha_{2}\}$ and choose $\delta=\omega_{1},$ the first fundamental weight. Then, $(\omega_{1},\alpha_{2})=0$, and thus any
weight $\gamma\neq\omega_{1}$ of $\Pi_{\omega_{1}}$ is written
$\gamma=\omega_{1}-k_{1}\alpha_{1}-k_{2}\alpha_{2}$ with $k_{1}>0$: hence, if
$t_{1}=0$, we have $\phi(e^{\omega_{1}-\gamma})=\delta_{\gamma,\omega_{1}}$
for all value of $t_{2}$. On the other hand, the $\omega_{1}$-admissible
subsets of $\{\alpha_{1},\alpha_{2}\}$ are $\emptyset$, $\{\alpha_{1}\}$ and
$\{\alpha_{1},\alpha_{2}\}$. Thus the empty $\omega_{1}$-admissible subset
$\emptyset$ yields the unique choice of $t_{2}$ such that $t_{1}=0$ and
$\mathbf{0}^{c}(\vec{t})$ is $\omega_{1}$-admissible, namely $t_{2}=0$.\ The
latter procedure has singled out a particular choice of parameters
$t_{1}=0,t_{2}=0$ among all the choices of $\vec{t}$ yielding the map
$\phi(e^{\omega_{1}-\gamma})=\delta_{\gamma,\omega_{1}}$.
\end{remark}

A straightforward application of Proposition
\ref{multiplicativeGraphDescription} yields the following corollary:

\begin{corollary}
\label{algebraDescripMinimalBoundaryWeight} The map $i\circ\Psi$ gives a
bijection between the minimal boundary $\partial\mathcal{H}_{\infty
}(\mathbb{R}^{d})$ and
\[
\{(\vec{t},w)\in\lbrack0,1]_{\delta}^{d}\times W|w\in W^{\mathbf{1}(\vec{t})}\}.
\]

\end{corollary}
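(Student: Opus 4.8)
The plan is to simply combine Proposition \ref{multiplicativeGraphDescription} with Proposition \ref{isomosphismeMult}, since both of the needed ingredients have already been established. First I would invoke Proposition \ref{multiplicativeGraphDescription}, which tells us that the map $i^{*}$ (sending $f$ to $f\circ i$) is a homeomorphism between $\Mult(\hat{T}_{\delta})^{+}$ and $\partial\mathcal{H}_{\infty}(\mathcal{G}(\mathbb{R}^{d}))$; and via the graph embedding of Section \ref{Sec_generalities}, $\partial\mathcal{H}_{\infty}(\mathcal{G}(\mathbb{R}^{d}))$ is in bijection with $\partial\mathcal{H}_{\infty}(\mathbb{R}^{d})$. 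Then I would invoke Proposition \ref{isomosphismeMult}, which provides a bijection $\Psi$ (namely $\Phi\circ\theta^{-1}$ suitably restricted) from the parameter set $\{(\vec{t},w)\in[0,1]_{\delta}^{d}\times W\mid w\in W^{\mathbf{1}(\vec{t})}\}$ onto $\Mult(\hat{T}_{\delta})^{+}$. Composing, the map $i\circ\Psi$ — or more precisely $(i^{*})\circ\Psi$ followed by the identification of $\partial\mathcal{H}_{\infty}(\mathcal{G}(\mathbb{R}^{d}))$ with $\partial\mathcal{H}_{\infty}(\mathbb{R}^{d})$ — is the desired bijection.

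The one point that deserves a line of justification is that this composite is indeed a \emph{bijection} onto $\partial\mathcal{H}_{\infty}(\mathbb{R}^{d})$: surjectivity and injectivity each follow from the corresponding property of the two maps being composed, so this is immediate. I would also note the explicit form of the correspondence, which is already recorded in the remark following Proposition \ref{isomosphismeMult}: for $(\vec{t},w)$ in the parameter set, the associated multiplicative map acts by $(e^{\gamma},n)\mapsto \vec{t}^{\,n\delta-w(\gamma)}/S_{\delta}(\vec{t})^{n}$, and transporting this through $i^{*}$ and the graph embedding gives precisely the central measure whose harmonic function is $p(\gamma,n)=\vec{t}^{\,n\delta-w(\gamma)}/S_{\delta}(\vec{t})^{n}$. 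This is the content of the first half of Theorem \ref{mainresult} once one further reparametrizes $[0,1]_{\delta}^{d}\times W$ by $K(\delta)$ via the map $t$ of Section \ref{SectionMean}, but that reparametrization is not needed here.

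Since all the real work has been done in the two cited propositions, there is no genuine obstacle in this corollary — it is a formal assembly. If anything, the only subtlety is bookkeeping: making sure that the identification of central measures on $\Gamma_{\infty}(\mathbb{R}^{d})$ with nonnegative normalized solutions of \eqref{recursiveRelation} on the growth graph $\mathcal{G}(\mathbb{R}^{d})$ (established in Section \ref{Sec_generalities}) is being used with the correct normalization, i.e. value $1$ at the root $(0,0)$, which under $i$ corresponds to $(s_{\delta},1)$ being sent to $1$ — exactly the normalization built into $\Mult(\hat{T}_{\delta})^{+}$. Once this is checked, the proof is a one-sentence composition of homeomorphisms.
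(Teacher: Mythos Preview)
Your proposal is correct and matches the paper's own treatment: the paper states this corollary immediately after Proposition \ref{isomosphismeMult} with the one-line justification ``A straightforward application of Proposition \ref{multiplicativeGraphDescription} yields the following corollary,'' which is exactly the composition you describe. Your additional remarks about the normalization at the root and the explicit formula are accurate and simply make explicit what the paper leaves implicit.
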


\section{Minimal boundary of $\Gamma_{\infty}(\Delta)$}

In this section, we use the description of $\partial\mathcal{H}_{\infty
}(\mathbb{R}^{d})$ to get the one of $\partial\mathcal{H}_{\infty}(\Delta)$.

\subsection{Algebraic description of the growth graph of $\Gamma_{\infty}(\Delta)$}

The growth graph $\mathcal{G}(\Delta)$ of $\Gamma_{\infty}(\Delta)$ admits a
similar description as the one of $\Gamma_{\infty}(\mathbb{R}^{d})$. The set
$\Lambda_{n}^{+}$ of vertices of rank $n$ of the graph $\mathcal{G}(\Delta)$ are pairs $(\lambda,n)$ where $\lambda$ is a weight of $P_{\delta}^{+}$
such that $\Gamma_{\Delta}(\lambda,n)\not =\emptyset$, and the weight of the
edge between $(\lambda,n)$ and $(\mu,n+1)$ is $e^{+}((\lambda,n),(\mu
,n+1))=\#\Gamma_{\Delta}(\lambda,\mu)$. Moreover, we have the following
algebraic description of $\mathcal{G}(\Delta)$:

\begin{proposition}
$\mathcal{G}(\Delta)$ is a multiplicative graph associated with the algebra $\hat{\mathcal{A}}_{\delta}$ with the injective map
$$i:\left\lbrace \begin{matrix}
\coprod_{n\geq 0}\Lambda_{n}&\longrightarrow&\hat{\mathcal{A}}_{\delta}\\
(\lambda,n)&\mapsto& (s_{\lambda},n),n\geq1\\
*&\mapsto& (s_{\delta},1)
\end{matrix}\right.,$$
and $\left(\hat{\mathcal{A}}_{\delta}\right)_{\mathcal{G}(\Delta)}=\hat{T}^{+}_{\delta}$. In particular, $\partial \mathcal{H}(\mathcal{G}(\Delta))$ is isomorphic to $\Mult(\hat{T}_{\delta}^{+})^{+}$ through the map
$$i^{*}:\left\lbrace \begin{matrix}
\Mult(\hat{T}^{+}_{\delta})^{+}&\longrightarrow&\partial \mathcal{H}(\mathcal{G}(\Delta))\\
f\mapsto f\circ i\\
\end{matrix}\right.$$
\end{proposition}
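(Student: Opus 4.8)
The plan is to mirror the proof of Proposition \ref{multiplicativeGraphDescription} almost verbatim, replacing the lattice algebra $\hat{P}_{\delta}$ by the character algebra $\hat{\mathcal{A}}_{\delta}$ and the weights $K_{\delta,\gamma}$ by tensor-product multiplicities. First I would verify that $i$ is injective: the $(s_{\lambda},n)$ with $\lambda\in T_{\delta}^{+}$ form a basis $\mathcal{B}$ of $\hat{\mathcal{A}}_{\delta}$, and distinct vertices $(\lambda,n)$ of $\mathcal{G}(\Delta)$ (recall $\lambda\in P_{\delta}^{+}$, $n\geq 1$) map to distinct basis elements; the root maps to $(s_{\delta},1)$, which is also a basis element, and no vertex of rank $n\geq 1$ other than the rank-$1$ vertex $(\delta,1)$ can equal it, so $i$ is injective on $\coprod_{n\geq 0}\Lambda_{n}$.

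Next I would check the multiplicative relation $i(\lambda,n)\,i(*)=\sum_{(\lambda,n)\nearrow(\mu,n+1)}e^{+}((\lambda,n),(\mu,n+1))\,i(\mu,n+1)$. In $\hat{\mathcal{A}}_{\delta}$ the left-hand side is $(s_{\lambda},n)(s_{\delta},1)=(s_{\lambda}s_{\delta},n+1)$. Decomposing the tensor product $V(\lambda)\otimes V(\delta)=\bigoplus_{\mu\in P^{+}} c_{\lambda,\delta}^{\mu}\,V(\mu)$ gives $s_{\lambda}s_{\delta}=\sum_{\mu} c_{\lambda,\delta}^{\mu} s_{\mu}$, so the left-hand side equals $\sum_{\mu}c_{\lambda,\delta}^{\mu}(s_{\mu},n+1)$. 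By Littelmann's theory (the third bulleted property recalled after the definition of $B(\delta)$), the multiplicity $c_{\lambda,\delta}^{\mu}$ of $V(\mu)$ in $V(\lambda)\otimes V(\delta)$ equals $\#\Gamma_{\Delta}(\lambda,\mu)=e^{+}((\lambda,n),(\mu,n+1))$, and $c_{\lambda,\delta}^{\mu}\neq 0$ is exactly the condition $(\lambda,n)\nearrow(\mu,n+1)$; moreover if $V(\mu)$ appears in $V(\lambda)\otimes V(\delta)$ and $\lambda\in T_{\delta}^{+}$ then $\mu\in T_{\delta}^{+}$, so all terms land in $\hat{\mathcal{A}}_{\delta}$. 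This establishes that $\mathcal{G}(\Delta)$ is multiplicative with respect to $(\hat{\mathcal{A}}_{\delta},i)$.

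Then I would identify the generated subalgebra. By definition $\left(\hat{\mathcal{A}}_{\delta}\right)_{\mathcal{G}(\Delta)}$ is the unital subalgebra generated by the positive cone spanned by $i(\coprod_n\Lambda_n)$, i.e.\ generated by the $(s_{\lambda},n)$ with $V(\lambda)$ an irreducible component of $V(\delta)^{\otimes n}$ (this is the content of the second and third Littelmann properties: $\Gamma_{\Delta}(\lambda,n)\neq\emptyset$ iff $V(\lambda)\in V(\delta)^{\otimes n}$), together with $i(*)=(s_\delta,1)$. That is precisely the spanning set defining $\hat{T}_{\delta}^{+}$, so $\left(\hat{\mathcal{A}}_{\delta}\right)_{\mathcal{G}(\Delta)}=\hat{T}_{\delta}^{+}$; one should also note $K\cdot K\subset K$ here, since the product of two elements $(s_\lambda,n),(s_\mu,m)$ with $V(\lambda)\in V(\delta)^{\otimes n}$, $V(\mu)\in V(\delta)^{\otimes m}$ is $(s_\lambda s_\mu,n+m)$, a nonnegative sum of $(s_\nu,n+m)$ with $V(\nu)\in V(\delta)^{\otimes(n+m)}$.

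Finally, the last assertion follows by applying Proposition \ref{multiGraphExtreme} to the multiplicative graph $\mathcal{G}(\Delta)$ with algebra $\hat{\mathcal{A}}_{\delta}$: the hypothesis $K\cdot K\subset K$ holds as just observed, so $i^{*}$ gives a homeomorphism between $\Mult^{+}(\hat{T}_{\delta}^{+})=\Mult(\hat{T}_{\delta}^{+})^{+}$ and the extreme points of $\mathcal{H}(\mathcal{G}(\Delta))$, and in particular $\partial\mathcal{H}(\mathcal{G}(\Delta))$ is in bijection with $\Mult(\hat{T}_{\delta}^{+})^{+}$ via $f\mapsto f\circ i$. I expect the only mildly delicate point to be the bookkeeping around the root vertex and the normalization $i(*)=(s_\delta,1)$ rather than a formal unit — exactly as in the $\mathbb{R}^{d}$ case this is handled by Proposition \ref{multiGraphExtreme}, whose proof passes to the quotient $B=(\hat{\mathcal{A}}_{\delta})_{\mathcal{G}(\Delta)}/\langle i(*)=1\rangle$; everything else is a direct transcription, the substantive representation-theoretic input being Littelmann's identification of branching multiplicities with path counts.
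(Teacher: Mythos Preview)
Your proposal is correct and follows essentially the same approach as the paper: verify the multiplicative relation $i(\lambda,n)i(*)=\sum_{\mu}\#\Gamma_{\Delta}(\lambda,\mu)\,i(\mu,n+1)$ via Littelmann's identification of tensor-product multiplicities with path counts, observe that the generated subalgebra is $\hat{T}_{\delta}^{+}$ by construction, and conclude with Proposition \ref{multiGraphExtreme}. Your version is simply more explicit about the bookkeeping (injectivity of $i$, the check $K\cdot K\subset K$) that the paper leaves implicit.
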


\begin{proof}
By Littelmann's paths theory, the following equality holds for $(\lambda,n)\in\Lambda_{n}$:
\begin{align*}
i(\lambda,n)i(*)=  &  (s_{\lambda},n)(s_{\delta},1)=\sum_{\mu\in
\mathcal{A}_{\delta}}\#\Gamma_{\Delta}(\lambda,\mu)(s_{\mu},n+1)\\
=  &  \sum_{\mu\in\mathcal{A}_{\delta}}e\big((\lambda,n),(\mu,n+1)\big)i(\mu,n+1).
\end{align*}
Thus, $\mathcal{G}(\Delta)$ is a multiplicative graph associated with
$\hat{\mathcal{A}}_{\delta}$ with the map $i$. Note that by construction,
$\left(  \hat{\mathcal{A}}_{\delta}\right)  _{\mathcal{G}(\Delta)}=\hat
{T}_{\delta}^{+}$: the last part of the proposition is deduced from
Proposition \ref{multiGraphExtreme}.
\end{proof}

Now we are going to connect the sets $\Mult(\hat{T}^{+}_{\delta
})^{+}$ and $\Mult(\hat{T}_{\delta})^{+}$.

\subsection{Relation between $\Mult(\hat{T}_{\delta})^{+}$ and $\Mult(\hat
{T}_{\delta}^{+})^{+}$}

Recall that $\hat{T}_{\delta}^{+}$ is a subalgebra of $\hat{T}_{\delta}$;
therefore, any nonnegative morphism on $\hat{T}_{\delta}$ induces by
restriction a nonnegative morphism on $\hat{T}_{\delta}^{+}$. This yields a
map $\chi:\Mult(\hat{T}_{\delta})^{+}\longrightarrow \Mult(\hat{T}_{\delta}%
^{+})^{+}$. The important step in the description of $\Mult(\hat{T}_{\delta
}^{+}))^{+}$ is the following:

\begin{proposition}
\label{multiRelationWeightCharacter} The map $\chi$ yields a surjection from
$\{f\in \Mult(\hat{T}_{\delta})^{+}\mid M_{f}\in\Delta\}$ to $\Mult(\hat
{T}_{\delta}^{+})^{+}$.
\end{proposition}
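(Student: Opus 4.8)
The plan is to prove the two assertions of Proposition \ref{multiRelationWeightCharacter} separately: that $\chi$ maps $\{f\in\Mult(\hat T_\delta)^+\mid M_f\in\Delta\}$ into $\Mult(\hat T_\delta^+)^+$, and that this map is surjective. The first is essentially automatic: restricting a nonnegative algebra morphism on $\hat T_\delta$ to the subalgebra $\hat T_\delta^+$ gives a nonnegative algebra morphism that still sends $(s_\delta,1)$ to $1$; the condition $M_f\in\Delta$ is not needed here but is recorded because it is what makes the $\phi$ of Proposition \ref{algebraicDescription} well-behaved. So the real content is surjectivity.

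For surjectivity I would start from an arbitrary $g\in\Mult(\hat T_\delta^+)^+$ and aim to produce $f\in\Mult(\hat T_\delta)^+$ with $M_f\in\Delta$ and $\chi(f)=g$. The key structural input is Corollary \ref{integralClosure}: $\hat T_\delta^+$ is integrally closed in $\hat T_\delta$, and moreover $\hat T_\delta$ is integral over $\hat T_\delta^+$ (each generator $(e^\gamma,1)$ is a root of the monic polynomial $\Phi(X)\in\hat T_\delta^+[X]$ by Proposition \ref{Prop_DecFI(T)}). Integrality means that $g$ extends to at least one algebra morphism $\tilde g:\hat T_\delta\to\mathbb C$ (lying-over / the fact that a morphism to a field extends along an integral extension, applied after tensoring suitably, or more concretely: adjoin the roots of $\Phi$ one at a time, at each step the value is constrained to be a root of a monic polynomial whose coefficients are already determined, hence at least one choice exists). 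The delicate points are then (i) that one can choose the extension to be real-valued and nonnegative on the cone spanned by the $(e^\gamma,1)$, and (ii) that among the finitely many extensions one can pick one whose drift vector $M_f$ lies in $\Delta$.

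To handle (i) and (ii) together I would use the explicit parametrization already available. By Proposition \ref{algebraicDescription} / Proposition \ref{isomosphismeMult}, every candidate $f\in\Mult(\hat T_\delta)^+$ with $M_f\in\Delta$ has the form $f(e^\gamma,n)=\vec t^{\,n\delta-\gamma}/S_\delta(\vec t)^n$ for a unique $\vec t\in[0,1]_\delta^d$, and then $f(s_\lambda,n)=s_\lambda(\vec t)/S_\delta(\vec t)^n=\vec t^{\,n\delta}s_\lambda(\vec t)/s_\delta(\vec t)^n$ for $(s_\lambda,n)\in\hat T_\delta^+$. So the claim reduces to: every $g\in\Mult(\hat T_\delta^+)^+$ is of the form $g(s_\lambda,n)=\vec t^{\,n\delta}\,s_\lambda(\vec t)/s_\delta(\vec t)^n$ for some $\vec t\in[0,1]_\delta^d$ — equivalently, the restriction map from the already-classified set of weight-side morphisms is onto the character-side morphisms. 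One shows this by analyzing $g$ directly: the plethysm identity $\Phi(X)=\sum_k(e_k(e^{\gamma_1},\dots,e^{\gamma_N}),k)X^{N-k}\in\hat T_\delta^+[X]$ shows that $g$, evaluated on the coefficients $(e_k,k)$, determines (via Newton's identities or just via the factorization $\Phi(X)=\prod_\gamma(X+(e^\gamma,1))$) the multiset of values the putative extension $\tilde g$ must take on $\{(e^\gamma,1)\}_{\gamma\in\Pi_\delta}$ — namely these values must be precisely the roots of the polynomial $\sum_k g(e_k,k)X^{N-k}$. Since $g$ is nonnegative on $K$ and $g(s_\delta,1)=1$, this polynomial has nonnegative coefficients and leading coefficient $1$ summing appropriately, forcing all its roots to be real and $\geq 0$ (here is where I expect to invoke a totally-positive-sequence / real-rootedness argument, or directly the nonnegativity of $g$ on the characters of all $\bigwedge^k V(\delta)$ and their products). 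One then checks that the unordered assignment of these roots to the weights $\gamma\in\Pi_\delta$ compatible with all the multiplicative relations $g((e^{\gamma}e^{\gamma'},2))=$ (value on the corresponding character combination) is realized by $(e^\gamma,1)\mapsto \vec t^{\,\delta-\gamma}\cdot c$ for a suitable $\vec t$ and normalization $c$, using Lemma \ref{stabilityNonzero}-type propagation from the highest weight $\delta$, and that after rescaling $c=1/S_\delta(\vec t)$ the resulting $f$ restricts to $g$.

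The main obstacle, I expect, is step (ii) combined with the non-negativity in (i): a priori the extension of $g$ along the integral extension $\hat T_\delta^+\subset\hat T_\delta$ produces a morphism that is real and nonnegative only after one correctly matches roots to weights, and one must rule out ``bad'' matchings that would either violate the multiplicative constraints or produce $M_f\notin\Delta$ for every allowable $w\in W$. The clean way around this is precisely to not argue abstractly about lying-over, but to use the completed classification of $\Mult(\hat T_\delta)^+$ from Proposition \ref{isomosphismeMult}: reduce surjectivity of $\chi$ to the statement that the explicit family $\vec t\mapsto\bigl((s_\lambda,n)\mapsto \vec t^{\,n\delta}s_\lambda(\vec t)/s_\delta(\vec t)^n\bigr)$, $\vec t\in[0,1]_\delta^d$, already exhausts $\Mult(\hat T_\delta^+)^+$, and prove the latter by the root-of-$\Phi$ argument above together with the observation that the drift $M_f$ can always be rotated into $\Delta$ by replacing $f$ with $f\circ w^{-1}$ (which does not change the restriction to the $W$-invariant algebra $\hat T_\delta^+$, since $s_\lambda\circ w=s_\lambda$). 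That last observation — $W$-invariance of the character algebra makes the restriction insensitive to the choice of Weyl chamber — is what reconciles ``$M_f\in\Delta$'' with surjectivity, and I would make sure to state it explicitly at the start of the argument.
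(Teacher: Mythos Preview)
Your approach is essentially the paper's: extend $g$ along the integral extension $\hat T_\delta^+\subset\hat T_\delta$, verify the extension is nonnegative via the Aissen--Edrei--Schoenberg--Whitney total-positivity theorem, then compose with a suitable $w\in W$ to land $M_f$ in $\Delta$ using the $W$-invariance of characters. Two points deserve sharpening.

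First, your worry about ``matching roots to weights'' and ruling out ``bad matchings'' is unnecessary, and the detour through an explicit $\vec t$-construction is avoidable. The paper uses abstract lying-over (Bourbaki, via Corollary~\ref{integralClosure}) to obtain \emph{some} ring morphism $\tilde f:\hat T_\delta\to\mathbb C$ extending $g$; the assignment of values to the $(e^\gamma,1)$ is then already fixed by the ring structure. One then proves that \emph{every} such extension is automatically in $\Mult(\hat T_\delta)^+$, because the AESW theorem shows that \emph{all} roots of $\tilde f(\Phi)(X)=\prod_\gamma(X+\tilde f(e^\gamma,1))$ are real nonpositive. No selection among extensions is required for step~(i); the parametrization by $\vec t$ is a consequence (Proposition~\ref{algebraicDescription}) rather than an input.

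Second, your hedged alternative ``or directly the nonnegativity of $g$ on the characters of all $\bigwedge^k V(\delta)$ and their products'' is not sufficient to force real-rootedness: nonnegative coefficients alone do not imply real roots. What AESW (Theorem~\ref{Th_negative}) requires is nonnegativity of \emph{all minors} of the associated Toeplitz matrix, and by Jacobi--Trudi these minors are exactly the plethysms $\boldsymbol s_\Lambda(e^{\gamma_1},\dots,e^{\gamma_N})$ for arbitrary partitions $\Lambda$. These are the characters of the Schur functors $c_\Lambda\cdot V(\delta)^{\otimes|\Lambda|}$, hence nonnegative combinations of $(s_\lambda,|\Lambda|)\in\hat T_\delta^+$, on which $\tilde f$ agrees with $g$ and is therefore nonnegative. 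Your final observation---that $s_\lambda\circ w=s_\lambda$ makes $\chi(f\circ w)=\chi(f)$, so one can rotate $M_f$ into $\Delta$ without changing the restriction---is exactly how the paper handles~(ii).
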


The proof of this proposition needs some preparation. Let $f$ be a nonnegative morphism from $\hat{T}_{\delta}^{+}$ to $\mathbb{R}%
$. By Corollary \ref{integralClosure} and by Corollary 4 page 35 in \cite{BBK},
$f$ can be extended to a morphism $\tilde{f}$ from $\hat{T}_{\delta}$ to
$\mathbb{C}$. The first task is to prove that $\tilde{f}\in \Mult(\hat
{T}_{\delta})^{+}$.

We need to recall a classical result by Aissen, Edrei, Schoenberg and White on
polynomials with real coefficients having negative zeros.

\begin{theorem}\cite{AESW}
\label{Th_negative}Consider a polynomial $P(T)=a_{m}T^{m}+a_{m-1}%
T^{m-1}+\cdots+a_{1}T+a_{0}\in\mathbb{R}[T]$. Then $P$ has only real and
nonpositive zeros if and only if the sequence $a_{0},a_{1},\ldots
,a_{m},0,0,0,\ldots$ is totally positive, that is if and only if all the
minors of the infinite matrix%
\[%
\begin{array}
[c]{ccccc}%
a_{0} & 0 & 0 & 0 & \cdots\\
a_{1} & a_{0} & 0 & 0 & \cdots\\
a_{2} & a_{1} & a_{0} & 0 & \cdots\\
a_{3} & a_{2} & a_{1} & 0 & \cdots\\
\vdots & \ddots & \ddots & \ddots & \cdots
\end{array}
\]
are nonnegative.
\end{theorem}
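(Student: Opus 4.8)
\emph{Proof proposal.} This is the classical Aissen--Edrei--Schoenberg--Whitney theorem, and the plan is to establish the two implications separately: the forward (``only if'') direction is elementary, while the converse is the substantial part. Throughout, given a finite real sequence $c=(c_{0},\dots,c_{r})$ I will write $M(c)$ for the row- and column-finite infinite matrix with $(i,j)$-entry $c_{i-j}$ (setting $c_{\ell}=0$ for $\ell\notin\{0,\dots,r\}$), and I will use the elementary fact that polynomial multiplication corresponds to matrix multiplication of these matrices, $M(c\ast c')=M(c)\,M(c')$, where $\ast$ denotes convolution of coefficient sequences and the product is well defined because the matrices are banded.

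\textbf{Forward direction.} Suppose $P$ has only real nonpositive zeros. For the forward implication one normalises $a_{m}>0$ (replacing $P$ by $-P$ if needed, this being the only sign choice compatible with total positivity of $(a_{k})$), so that $P(T)=a_{m}\prod_{i=1}^{m}(T+r_{i})$ with every $r_{i}\ge0$. Then $M(a_{0},\dots,a_{m})$ equals $a_{m}$ times the product of the matrices $M(r_{i},1)$, each of which is lower bidiagonal with nonnegative entries and therefore totally positive (every minor of such a matrix being $0$ or a product of nonnegative entries). Since total positivity is preserved under matrix products --- by the Cauchy--Binet formula a $k\times k$ minor of a product is a finite sum of products of $k\times k$ minors of the factors --- and under multiplication by the positive scalar $a_{m}$, the sequence $(a_{0},\dots,a_{m},0,\dots)$ is totally positive.

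\textbf{Converse direction.} Suppose now that $M(a)$ is totally positive. First I would reduce to the case where all $a_{k}$ are strictly positive: the $1\times1$ minors give $a_{k}\ge0$, and there can be no internal zero, since if $a_{i},a_{k}>0$ but $a_{j}=0$ for some $i<j<k$, then choosing rows and columns so that the $2\times2$ minor has anti-diagonal entries $a_{i},a_{k}$ and one diagonal entry equal to $a_{j}$, that minor equals $-a_{i}a_{k}<0$, contradicting total positivity; factoring out the largest power of $T$ dividing $P$ merely shifts $M(a)$ and preserves total positivity, so one may assume $a_{0},\dots,a_{m}>0$. I would then argue by induction on $m$, the cases $m\le1$ being immediate, the inductive step consisting in extracting a factor $T+\alpha$ with $\alpha\ge0$ whose quotient $Q=P/(T+\alpha)$ again has positive coefficients and totally positive Toeplitz matrix, so that the induction hypothesis applies to $Q$.

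The extraction of such a linear factor is where I expect the real difficulty to lie. The $2\times2$ minors alone only yield log-concavity of $(a_{k})$, which is insufficient: $1+T+T^{2}$ is log-concave but has no real zero, and its Toeplitz matrix fails total positivity only at a $3\times3$ minor, so the higher minors are genuinely needed. The classical resolution is the theory of P\'olya frequency sequences: total positivity of the full, infinite Toeplitz matrix of a finitely supported sequence is equivalent --- via the variation-diminishing property of totally positive kernels, following Edrei --- to the generating polynomial factoring as $c\prod_{i}(1+\alpha_{i}T)$ with all $\alpha_{i}\ge0$, which is exactly the desired conclusion. Because a self-contained proof of this equivalence is long, here we simply invoke \cite{AESW}.
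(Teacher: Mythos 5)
The paper does not prove this statement at all: Theorem \ref{Th_negative} is quoted verbatim from \cite{AESW} and used as a black box, so there is no internal argument to compare yours against. What you do prove is correct: the normalisation remark (total positivity forces the sign, so one may assume $a_m>0$) is exactly the caveat needed to make the statement literally true; the factorisation of the Toeplitz matrix of $a_m\prod_i(T+r_i)$ into the lower bidiagonal factors $M(r_i,1)$, each totally nonnegative, together with Cauchy--Binet, is the standard and correct proof of the easy implication; and your reduction for the converse (no internal zeros via the $2\times2$ minor $a_ja_{i+k-j}-a_ia_k=-a_ia_k$, stripping powers of $T$, induction by extracting a factor $T+\alpha$) is a sound skeleton, with an apt warning that $2\times2$ minors alone are insufficient, as $1+T+T^2$ shows. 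The one thing you do not supply is the substantial half --- the variation-diminishing/P\'olya-frequency argument of Aissen--Edrei--Schoenberg--Whitney that a finitely supported totally positive sequence has a generating polynomial with only nonpositive real roots --- and for that you fall back on citing \cite{AESW}, which is precisely what the paper itself does for the whole theorem. So relative to the paper nothing is missing (you in fact prove strictly more), but be aware that as a self-contained proof the converse direction is still an appeal to the literature rather than an argument.
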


\begin{proposition}
Any morphism $\tilde{f}$ defined on $\hat{T}_{\delta}$ which extends the
positive morphism $f$ belongs to $\Mult(\hat{T}_{\delta})^{+}$.
\end{proposition}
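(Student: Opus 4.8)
The plan is to show that any ring morphism $\tilde f:\hat T_\delta\to\mathbb{C}$ extending a nonnegative morphism $f$ on $\hat T_\delta^+$ actually takes nonnegative real values on the generating monomials $(e^\gamma,1)$, $\gamma\in\Pi_\delta$, and hence (being multiplicative) on all of the positive cone; this gives $\tilde f\in\Mult(\hat T_\delta)^+$. The key is to exploit the polynomial $\Phi(X)=\prod_{\gamma\in\Pi_\delta}(X+(e^\gamma,1))\in\hat T_\delta^+[X]$ of Proposition \ref{Prop_DecFI(T)}. Applying $\tilde f$ coefficientwise to $\Phi(X)$ produces a monic polynomial $\tilde f(\Phi)(X)=\prod_{\gamma\in\Pi_\delta}(X+\tilde f(e^\gamma,1))\in\mathbb{C}[X]$, whose coefficients are the images under $f$ of the expressions $(e_k(e^{\gamma_1},\dots,e^{\gamma_N}),k)$. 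Since these expressions lie in $\hat T_\delta^+$ and $f$ is nonnegative there, the coefficients of $\tilde f(\Phi)(X)$ are all nonnegative reals.

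**Applying the Aissen--Edrei--Schoenberg--White criterion.**
At this point I would argue as follows. A priori the roots $-\tilde f(e^\gamma,1)$ of $\tilde f(\Phi)(X)$ are complex numbers. But $\tilde f(\Phi)(X)$ is a polynomial with \emph{real} (indeed nonnegative) coefficients, so its nonreal roots come in conjugate pairs; to conclude I must rule these out and also rule out negative real parts. Here is where Theorem \ref{Th_negative} enters: I want to deduce that $\tilde f(\Phi)(X)$ has only real nonpositive zeros. The natural route is to check that the (finite) coefficient sequence $a_0,\dots,a_N$ of $\tilde f(\Phi)$, padded by zeros, is totally positive. This total positivity should be inherited from the analogous statement at the level of $\hat T_\delta$: the full product $\Phi(X)$ already factors with roots $-(e^\gamma,1)$, and one expects the relevant Toeplitz minors of its coefficient sequence to be again images under $f$ of elements of $\hat T_\delta^+$ (they are, up to sign bookkeeping, polynomial expressions in the $(e^{\gamma_i},1)$ that represent characters of subrepresentations of tensor powers of $V(\delta)$, hence lie in the positive cone). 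Feeding such an element through the nonnegative morphism $f$ yields a nonnegative number, so every minor is $\geq 0$; Theorem \ref{Th_negative} then forces all roots of $\tilde f(\Phi)(X)$ to be real and nonpositive, i.e. $\tilde f(e^\gamma,1)\geq 0$ for every $\gamma\in\Pi_\delta$.

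**From generators to the whole cone.**
Once $\tilde f(e^\gamma,1)\geq 0$ for all $\gamma\in\Pi_\delta$, multiplicativity of $\tilde f$ gives $\tilde f(e^\gamma,n)=\prod_i\tilde f(e^{\gamma_i},1)\geq 0$ for any decomposition $\gamma=\sum_{i=1}^n\gamma_i$ with $\gamma_i\in\Pi_\delta$, so $\tilde f$ is nonnegative on the monomial basis of $\hat T_\delta$ and in particular real-valued; together with $\tilde f(s_\delta,1)=f(s_\delta,1)=1$ this shows $\tilde f\in\Mult(\hat T_\delta)^+$. I would also note that the extension $\tilde f$ is then unique on the subalgebra generated by the $(e^\gamma,1)$, which is all of $\hat T_\delta$, so the arbitrariness in "any morphism $\tilde f$" is only apparent.

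**Main obstacle.**
The delicate point is the total-positivity verification: showing that each finite Toeplitz minor of the coefficient sequence of $\tilde f(\Phi)(X)$ is nonnegative. One must identify these minors (a priori just integer polynomial combinations of the symmetric functions $e_k(e^{\gamma_i})$) with evaluations under $f$ of genuine elements of the positive cone $\hat T_\delta^+$ — i.e. with characters of honest representations occurring in tensor powers of $V(\delta)$. Establishing this positivity at the level of $\hat T_\delta$ (so that it survives application of the nonnegative morphism $f$) is the crux; I expect it to rest on a Jacobi--Trudi / dual-Cauchy type identity expressing those minors as Schur-positive combinations, analogous to how Kerov--Vershik handle the classical Thoma case, and this is where the "plethysms of Schur and Weyl characters" mentioned in the introduction do the real work.
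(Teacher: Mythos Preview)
Your approach is essentially the paper's: apply $\tilde f$ to $\Phi(X)$, recognize via Jacobi--Trudi that the Toeplitz minors of the resulting coefficient sequence are exactly $\tilde f(\boldsymbol{s}_\Lambda(e^{\gamma_1},\dots,e^{\gamma_N}),|\Lambda|)$ for partitions $\Lambda$, observe that each plethysm $\boldsymbol{s}_\Lambda(e^{\gamma_1},\dots,e^{\gamma_N})$ is the character of the Schur functor $c_\Lambda\cdot V(\delta)^{\otimes|\Lambda|}$ (a genuine submodule of $V(\delta)^{\otimes|\Lambda|}$, via Young symmetrizers), hence lies in $\hat T_\delta^+$, so its image under $f$ is nonnegative; then Theorem~\ref{Th_negative} gives $\tilde f(e^\gamma,1)\geq 0$. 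What you flag as the ``main obstacle'' is exactly what the paper resolves with the Young-symmetrizer identification, which you correctly anticipate.

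One small correction: your closing remark that the extension $\tilde f$ is unique is not right. Distinct extensions of $f$ to $\hat T_\delta$ do exist in general---they are related by the Weyl group action $\tilde f\mapsto \tilde f\circ w$, and the paper exploits precisely this freedom in the proof of Proposition~\ref{multiRelationWeightCharacter} to arrange $M_{\tilde f}\in\Delta$. The proposition only asserts that every extension is nonnegative, not that it is unique, so this does not affect your argument for the statement itself.
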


\begin{proof}
Let $\tilde{f}$ be a morphism extending $f$. Set $\varphi(T)=\tilde{f}%
(\Phi)(T)$ that is
\[
\varphi(T)=\prod_{\gamma\in\Pi_{\delta}}(T+\tilde{f}(e^{\gamma},1)).
\]
By using the same arguments as in the proof of Proposition \ref{Prop_DecFI(T)}%
, we obtain that the coefficients of $\varphi(T)$ are the
\[
\tilde{f}(e_{k}(e^{\gamma_{1}},\ldots,e^{\gamma_{N}}),k)\in\mathbb{C}%
,k=0,\ldots,N
\]
Now it follows from the Jacobi-Trudi determinantal expression of the Schur
functions that the minors of the matrix defined from the coefficients of
$\varphi(T)$ as in Theorem \ref{Th_negative} coincide with the complex numbers%
\[
\tilde{f}(\boldsymbol{s}_{\Lambda}(e^{\gamma_{1}},\ldots,e^{\gamma_{N}%
}),\left\vert \Lambda\right\vert ),\Lambda\in\mathcal{P}_{N}%
\]
where $\mathcal{P}_{N}$ is the set of partitions with at most $N$ parts and
$\boldsymbol{s}_{\Lambda}(e^{\gamma_{1}},\ldots,e^{\gamma_{N}})$ is the
plethysm of the Schur function $\boldsymbol{s}_{\lambda}$ in $N$ variables
$X_{1},\ldots,X_{N}$ by the Weyl character $s_{\delta}$. If we consider any
young symmetrizer $c_{\Lambda}$ of shape $\Lambda$ in $\mathbb{R}[S_{l}]$, the
group algebra of the symmetric group $S_{l}$ (see \cite{Ful}), the space
\[
c_{\Lambda}\cdot V(\delta)^{\otimes l}\text{ such that }l=\left\vert
\Lambda\right\vert
\]
has indeed the structure of a $G$-module and
\[
\boldsymbol{s}_{\Lambda}(e^{\gamma_{1}},\ldots,e^{\gamma_{N}})=\mathrm{char}%
\left(  c_{\Lambda}\cdot V(\delta)^{\otimes l}\right)  .
\]
This shows that $\boldsymbol{s}_{\Lambda}(e^{\gamma_{1}},\ldots,e^{\gamma_{N}%
})$ decomposes as a sum of characters in $\{s_{\lambda}\mid\lambda\in
\delta^{\otimes|\Lambda|}\}$ with nonnegative integer coefficients.\ In
particular, $(\boldsymbol{s}_{\Lambda}(e^{\gamma_{1}},\ldots,e^{\gamma_{N}%
}),|\Lambda|)$ belongs to $\hat{T}_{\delta}^{+}$ and therefore we get that
$\tilde{f}(\boldsymbol{s}_{\Lambda}(e^{\gamma_{1}},\ldots,e^{\gamma_{N}%
}),\left\vert \Lambda\right\vert )=f(\boldsymbol{s}_{\Lambda}(e^{\gamma_{1}%
},\ldots,e^{\gamma_{N}}),\left\vert \Lambda\right\vert )$ is real nonnegative
since $f$ is assumed nonnegative.\ By Theorem \ref{Th_negative} this shows
that $-\tilde{f}(e^{\gamma},1)$ is real nonpositive for any $\gamma\in
\Pi_{\delta}$. Finally we obtain that $\tilde{f}(e^{\gamma},1)$ is real nonnegative
for any $\gamma\in\Pi_{\delta}$ and thus $\tilde{f}$ takes real nonnegative values
on $\hat{T}_{\delta}$.
\end{proof}

\bigskip

\begin{proof}
[Proof of Proposition \ref{multiRelationWeightCharacter}]Let $f\in
\Mult(\hat{T}_{\delta}^{+})^{+}$. By Proposition \ref{Th_negative}, there
exists $\tilde{f}\in \Mult(\hat{T}_{\delta})^{+}$ such that $\tilde
{f}(s_{\lambda},n)=f(s_{\lambda},n)$ for $(s_{\lambda},n)\in\hat{T}_{\delta
}^{+}$. Let $w\in W$ and $(s_{\lambda},n)\in\hat{T}_{\delta}^{+}$: since
$w^{-1}$ yields a multiplicity preserving bijection on $\Pi_{\lambda}$,
\begin{align*}
(\tilde{f}\circ w)(s_{\lambda},n)=  &  \sum_{\gamma\in\Pi_{\lambda}}%
K_{\lambda,\gamma}\tilde{f}(e^{w(\gamma)},n)\\
=  &  \sum_{\gamma\in\Pi_{\lambda}}K_{\lambda,w^{-1}(\gamma)}\tilde
{f}(e^{\gamma},n)\\
=  &  \sum_{\gamma\in\Pi_{\lambda}}K_{\lambda,\gamma}\tilde{f}(e^{\gamma
},n)=\tilde{f}(s_{\lambda},n).
\end{align*}
Thus, for all $w\in W$, $(\tilde{f}\circ w)_{|\hat{T}_{\delta}^{+}}=\tilde
{f}_{|\hat{T}_{\delta}^{+}}$. Let $w\in W$ be such that $M_{\tilde{f}\circ
w}\in\Delta$ and set $g=\tilde{f}\circ w$. Then, $g$ is an element of
$\Mult(\hat{T}_{\delta})^{+}$ such that $\chi(g)=f$ and $M_{g}\in\Delta$.
\end{proof}

\bigskip

A straightforward application of Proposition \ref{algebraicDescription} and
Proposition \ref{isomosphismeMult} yields the following corollary:

\begin{corollary}
\label{firstExpression} There exists $\vec{t}\in[0,1]_{\delta}^{d}$ such that
$\mathbf{0}^{c}(\vec{t})$ is $\delta$-admissible and such that $f$ is equal to
\[
f(s_{\lambda},n)=\frac{S_{\lambda,n\delta}(\vec{t})}{(S_{\delta
}(\vec{t}))^{n}}.
\]
We denote this function by $f_{\vec{t}}$.
\end{corollary}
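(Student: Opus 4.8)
The statement concerns an arbitrary $f\in\Mult(\hat{T}_{\delta}^{+})^{+}$, and the plan is to combine the surjectivity obtained in Proposition~\ref{multiRelationWeightCharacter} with the explicit parametrization of $\Mult(\hat{T}_{\delta})^{+}$ from Propositions~\ref{algebraicDescription} and~\ref{isomosphismeMult}. First I would apply Proposition~\ref{multiRelationWeightCharacter} to choose a morphism $g\in\Mult(\hat{T}_{\delta})^{+}$ with $M_{g}\in\Delta$ and $g|_{\hat{T}_{\delta}^{+}}=f$; it then suffices to compute $g$ on the basis elements $(s_{\lambda},n)$ of $\hat{T}_{\delta}^{+}$.

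The reason for insisting that $M_{g}\in\Delta$ is that this forces the Weyl-group parameter of $g$ to be trivial: in the bijection $\Psi$ of Proposition~\ref{isomosphismeMult}, the unique point of $W(M_{g})\cap\Delta$ is $M_{g}$ itself, and $\Id\in W^{S^{\prime}}$ for any $S^{\prime}$, so by the uniqueness in Lemma~\ref{unicityRightCoset} the coset representative attached to $g$ is $\Id$. Thus $g=\Psi(\vec{t},\Id)$ for some $\vec{t}$; by Lemma~\ref{Prop_reduc[0,1]} its coordinates lie in $[0,1]$, and by Corollary~\ref{unicity} the set $\mathbf{0}^{c}(\vec{t})$ is $\delta$-admissible, i.e.\ $\vec{t}\in[0,1]_{\delta}^{d}$.

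It then remains to read off $f$. By the explicit formula recorded just after Proposition~\ref{isomosphismeMult}, $\Psi(\vec{t},\Id)(e^{\gamma},n)=\vec{t}^{\,n\delta-\gamma}/S_{\delta}(\vec{t})^{n}$ for all $(e^{\gamma},n)\in\hat{T}_{\delta}$, and $S_{\delta}(\vec{t})\geq 1$ since $\vec{t}$ has non-negative coordinates and $K_{\delta,\delta}=1$. Expanding $(s_{\lambda},n)=\sum_{\gamma}K_{\lambda,\gamma}(e^{\gamma},n)$ inside $\hat{P}_{\delta}$ and using additivity of $g$ on this decomposition gives
\[
f(s_{\lambda},n)=g(s_{\lambda},n)=\frac{1}{S_{\delta}(\vec{t})^{n}}\sum_{\gamma}K_{\lambda,\gamma}\,\vec{t}^{\,n\delta-\gamma}.
\]
Since $(s_{\lambda},n)\in\hat{T}_{\delta}^{+}$, every $\gamma$ with $K_{\lambda,\gamma}\neq 0$ is a weight of $V(\lambda)\subset V(\delta)^{\otimes n}$, so $n\delta-\gamma$ is a non-negative integer combination of simple roots and the right-hand sum is precisely the evaluation at $\vec{t}$ of $S_{\lambda,n\delta}=e^{-n\delta}s_{\lambda}=\sum_{\gamma}K_{\lambda,\gamma}e^{\gamma-n\delta}$. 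This gives $f(s_{\lambda},n)=S_{\lambda,n\delta}(\vec{t})/S_{\delta}(\vec{t})^{n}$, and we denote this function by $f_{\vec{t}}$.

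All the real work sits in Proposition~\ref{multiRelationWeightCharacter} and in the parametrization of $\Mult(\hat{T}_{\delta})^{+}$; the corollary is then a bookkeeping step with no genuine obstacle. The only two points needing a little care are (i) confirming that the Weyl-group element is trivial, which is immediate from $M_{g}\in\Delta$ and Lemma~\ref{unicityRightCoset}, and (ii) reconciling the two meanings of the symbol $\vec{t}^{\,\beta}$ — the one coming from $\phi_{\vec{t}}\in\Mult(\mathbb{R}[Q^{+}])^{+}$ and the one implicit in the definition of $S_{\lambda,n\delta}(\vec{t})$ — which coincide because every exponent $\beta=n\delta-\gamma$ that occurs lies in $Q^{+}$ with non-negative integer coordinates on the simple roots.
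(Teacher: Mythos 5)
Your proposal is correct and follows essentially the same route as the paper, which obtains the corollary as a direct application of Proposition \ref{multiRelationWeightCharacter} together with the parametrization of $\Mult(\hat{T}_{\delta})^{+}$ from Propositions \ref{algebraicDescription} and \ref{isomosphismeMult} (with Corollary \ref{unicity} and Lemma \ref{Prop_reduc[0,1]} supplying admissibility of $\mathbf{0}^{c}(\vec{t})$ and the bound $t_{i}\in[0,1]$). Your extra checks — that the Weyl-group element is $\Id$ because $M_{g}\in\Delta$, that each $(e^{\gamma},n)$ in the expansion of $(s_{\lambda},n)$ lies in $\hat{T}_{\delta}$, and that $n\delta-\gamma\in Q^{+}$ so the two readings of $\vec{t}^{\,n\delta-\gamma}$ agree — are exactly the bookkeeping the paper leaves implicit.
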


We denote by $\Psi^{+}:[0,1]_{\delta}^{d}\longrightarrow \Mult(\hat{T}_{\delta}%
^{+})^{+}$ the map sending $\vec{t}$ to the multiplicative map $f_{\vec{t}}$
of the latter corollary.

\subsection{Injectivity of the map $\Psi^{+}$}

It remains to show that the map $\Psi^{+}$ is injective.

\begin{lemma}
\label{lemma_encadre}Let $f\in \Mult(\hat{T}_{\delta}^{+})$. Let $\vec{t}\in
[0,1]_{\delta}^{d}$. For any $(\lambda,n)\in\hat{T}^{+}_{\delta}$ we have%
\[
1\leq\frac{S_{\lambda,n\delta}(\vec{t})}{\vec{t}^{n\delta-\lambda}}\leq
\dim(V(\lambda)).
\]

\end{lemma}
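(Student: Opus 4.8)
The plan is to recognise that, after dividing by $\vec{t}^{\,n\delta-\lambda}$, the quantity in the statement is nothing but the polynomial $S_{\lambda,\lambda}$ evaluated at $\vec{t}$, and then to read off both inequalities directly from the monomial expansion of $S_{\lambda,\lambda}(\vec{t})$. Notice in advance that neither the hypothesis $f\in\Mult(\hat{T}_{\delta}^{+})$ nor the refinement $\vec{t}\in[0,1]_{\delta}^{d}$ (as opposed to merely $\vec{t}\in[0,1]^{d}$) will actually be used: the bound is a purely combinatorial fact about Weyl characters.

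First I would record the identity $S_{\lambda,n\delta}(\vec{t})=\vec{t}^{\,n\delta-\lambda}\,S_{\lambda,\lambda}(\vec{t})$. Since $(\lambda,n)\in\hat{T}_{\delta}^{+}$, the module $V(\lambda)$ is a component of $V(\delta)^{\otimes n}$, so every weight $\gamma$ of $V(\lambda)$ satisfies $\gamma\leq\lambda\leq n\delta$ in the root order; in particular $n\delta-\lambda\in Q^{+}$, which makes $S_{\lambda,n\delta}$ well defined, and from $S_{\lambda,n\delta}=e^{-n\delta}s_{\lambda}=e^{-(n\delta-\lambda)}\bigl(e^{-\lambda}s_{\lambda}\bigr)=e^{-(n\delta-\lambda)}S_{\lambda,\lambda}$ one gets the displayed identity, $\vec{t}^{\,n\delta-\lambda}$ being precisely the evaluation of the monomial $e^{-(n\delta-\lambda)}=\prod_{i}T_{i}^{(n\delta-\lambda)_{i}}$ at $\vec{t}$. (When $\vec{t}^{\,n\delta-\lambda}=0$, i.e.\ some $t_{i}=0$, the ratio in the statement is to be read through this identity of polynomials in the $t_{i}$.) Consequently
\[
\frac{S_{\lambda,n\delta}(\vec{t})}{\vec{t}^{\,n\delta-\lambda}}=S_{\lambda,\lambda}(\vec{t})=\sum_{\gamma}K_{\lambda,\gamma}\,\vec{t}^{\,\lambda-\gamma},
\]
the sum running over the weights $\gamma$ of $V(\lambda)$, where we used $S_{\lambda,\lambda}=\sum_{\gamma}K_{\lambda,\gamma}e^{\gamma-\lambda}$ and that $e^{\gamma-\lambda}=e^{-(\lambda-\gamma)}$ evaluates to $\vec{t}^{\,\lambda-\gamma}$ because $\lambda-\gamma\in Q^{+}$ for every weight $\gamma$ of $V(\lambda)$.

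It then remains to bound $\sum_{\gamma}K_{\lambda,\gamma}\vec{t}^{\,\lambda-\gamma}$. For each weight $\gamma$ write $\lambda-\gamma=\sum_{i}m_{i}(\gamma)\alpha_{i}$ with $m_{i}(\gamma)\in\mathbb{Z}_{\geq0}$, so that $\vec{t}^{\,\lambda-\gamma}=\prod_{i}t_{i}^{\,m_{i}(\gamma)}$ lies in $[0,1]$ since every $t_{i}\in[0,1]$ (with the convention $0^{0}=1$), and equals $1$ exactly when $\gamma=\lambda$; moreover $K_{\lambda,\lambda}=1$ because the highest weight space of $V(\lambda)$ is one-dimensional. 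As all summands are nonnegative, keeping only the term $\gamma=\lambda$ gives $S_{\lambda,\lambda}(\vec{t})\geq1$, while bounding each monomial by $1$ gives $S_{\lambda,\lambda}(\vec{t})\leq\sum_{\gamma}K_{\lambda,\gamma}=\dim V(\lambda)$. Combined with the displayed identity this is exactly the asserted double inequality. I do not expect any genuine obstacle: the argument is essentially a one-line computation once the evaluation notation is unwound, the only mild point being the interpretation of the ratio when a coordinate of $\vec{t}$ vanishes, which is handled by the polynomial identity $S_{\lambda,n\delta}=e^{-(n\delta-\lambda)}S_{\lambda,\lambda}$ recorded above.
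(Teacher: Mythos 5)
Your proof is correct and is essentially the paper's own argument: both rest on the monomial expansion $S_{\lambda,n\delta}(\vec{t})=\sum_{\gamma}K_{\lambda,\gamma}\vec{t}^{\,n\delta-\gamma}$, using $K_{\lambda,\lambda}=1$ for the lower bound and $t_{i}\leq 1$ together with $\lambda-\gamma\in Q^{+}$ for the upper bound. The only cosmetic difference is that you factor out $\vec{t}^{\,n\delta-\lambda}$ first (which also makes the interpretation of the ratio when some $t_{i}=0$ explicit), whereas the paper compares the monomials $\vec{t}^{\,n\delta-\gamma}$ to $\vec{t}^{\,n\delta-\lambda}$ termwise; your remark that the hypothesis on $f$ is never used is also accurate.
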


\begin{proof}
On the first hand,
\[
S_{\lambda,n\delta}(\vec{t})=\sum_{\gamma\in\Pi_{\lambda}}K_{\lambda,\gamma}%
\vec{t}^{n\delta-\gamma}\geq\vec{t}^{n\delta-\lambda},
\]
which yields $1\leq\frac{S_{\lambda,n\delta}(\vec{t})}{\vec{t}^{n\delta
}-\lambda}$. On the other hand, since $t_{i}\leq1$ for all $1\leq i\leq d$,
\[
S_{\lambda,n\delta}(\vec{t})=\sum_{\gamma\in\Pi_{\lambda}}K_{\lambda,\gamma}%
\vec{t}^{n\delta-\gamma}\leq\sum_{\gamma\in\Pi_{\lambda}}K_{\lambda,\gamma}%
\vec{t}^{n\delta-\lambda}\leq\dim V(\lambda)\vec{t}^{n\delta-\lambda},
\]
yielding the other inequality
\[
\frac{S_{\lambda,n\delta}(\vec{t})}{\vec{t}^{n\delta-\lambda}}\leq
\dim(V(\lambda)).
\]

\end{proof}

\begin{corollary}
\label{equalConstant} Let $t=(t_{1},\dots,t_{d}),\tau=(\tau_{1},\dots,\tau
_{d})$ be such that $\Psi^{+}(\vec{t})=\Psi^{+}(\vec{\tau})$. Then $S_{\delta
}(\vec{t})=S_{\delta}(\vec{\tau})$.
\end{corollary}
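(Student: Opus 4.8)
The plan is to test the identity $\Psi^{+}(\vec t)=\Psi^{+}(\vec\tau)$ against a single well-chosen family of basis elements and then extract the conclusion from a growth-rate comparison. First I would observe that $n\delta\in T_{\delta}^{+}$ for every $n\geq1$, since $V(n\delta)$ is the Cartan (highest) component of $V(\delta)^{\otimes n}$; hence $(s_{n\delta},n)\in\hat T_{\delta}^{+}$. Evaluating both sides of $\Psi^{+}(\vec t)(s_{n\delta},n)=\Psi^{+}(\vec\tau)(s_{n\delta},n)$ via Corollary \ref{firstExpression} gives
\[
\frac{S_{n\delta,n\delta}(\vec t)}{S_{\delta}(\vec t)^{n}}=\frac{S_{n\delta,n\delta}(\vec\tau)}{S_{\delta}(\vec\tau)^{n}}.
\]
Writing $r=S_{\delta}(\vec\tau)/S_{\delta}(\vec t)$ — which is well defined and positive because $S_{\delta}=S_{\delta}(T_{1},\dots,T_{d})$ is a polynomial in the $T_{i}=e^{-\alpha_{i}}$ with nonnegative integer coefficients whose constant term (coming from the weight $\delta$, of multiplicity one) is $1$, so that $S_{\delta}(\vec t),S_{\delta}(\vec\tau)\geq1$ for $\vec t,\vec\tau\in[0,1]^{d}$ — one obtains $r^{n}=S_{n\delta,n\delta}(\vec\tau)/S_{n\delta,n\delta}(\vec t)$ for all $n\geq1$.

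Next I would bound the right-hand side by invoking Lemma \ref{lemma_encadre} with $\lambda=n\delta$. Since $\vec t^{\,n\delta-n\delta}=1$, the lemma gives $1\leq S_{n\delta,n\delta}(\vec t)\leq\dim V(n\delta)$, and likewise for $\vec\tau$, whence
\[
\frac{1}{\dim V(n\delta)}\leq r^{n}\leq\dim V(n\delta)\qquad\text{for all }n\geq1.
\]
By the Weyl dimension formula, $\dim V(n\delta)=\prod_{\alpha\in R_{+}}\tfrac{(n\delta+\rho,\alpha)}{(\rho,\alpha)}$ is a polynomial in $n$, so the upper bound grows at most polynomially and the lower bound decays at most polynomially in $n$. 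An exponential $r^{n}$ with $r\neq1$ cannot satisfy such bounds for all $n$: if $r>1$ then $r^{n}$ eventually exceeds $\dim V(n\delta)$, and if $r<1$ then $r^{n}$ eventually drops below $1/\dim V(n\delta)$. Therefore $r=1$, that is $S_{\delta}(\vec t)=S_{\delta}(\vec\tau)$.

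There is no substantial obstacle here: the only point needing (minor) care is the positivity of $S_{\delta}(\vec t)$ so that $r$ is meaningful, which follows from the nonnegativity of the coefficients of $S_{\delta}(T_{1},\dots,T_{d})$ together with its constant term being $1$. Everything else is the polynomial-versus-exponential dichotomy, packaged by Lemma \ref{lemma_encadre} and the Weyl dimension formula.
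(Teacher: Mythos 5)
Your proof is correct and follows essentially the same route as the paper: testing $\Psi^{+}(\vec t)=\Psi^{+}(\vec\tau)$ on the elements $(s_{n\delta},n)$, applying Lemma \ref{lemma_encadre} to bound $S_{n\delta,n\delta}$ between $1$ and $\dim V(n\delta)$, and concluding from the polynomial growth of $\dim V(n\delta)$ that the ratio of the $S_{\delta}$-values must be $1$. The only addition is your explicit remark that $S_{\delta}(\vec t)\geq 1$, which the paper leaves implicit.
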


\begin{proof}
For all $n\geq1$, $(n\delta,n)\in\hat{T}_{\delta}^{+}$. Thus, by Lemma
\ref{lemma_encadre}, we have
\[
1\leq S_{n\delta,n\delta}(\vec{t})\leq\dim(V(n\delta))\text{ and }1\leq
S_{n\delta,n\delta}(\vec{\tau})\leq\dim(V(n\delta)).
\]
This yields
\[
\frac{1}{\dim(V(n\delta))}\leq\frac{S_{n\delta,n\delta}(\vec{t})}%
{S_{n\delta,n\delta}(\vec{\tau})}\leq\dim(V(n\delta)).
\]
But
\[
\frac{S_{n\delta,n\delta}(\vec{t})}{S_{n\delta,n\delta}(\vec{\tau})}%
=\frac{S_{\delta}(\vec{\tau})^{n}\Psi(t_{1},\dots,t_{d})(s_{n\delta}%
,n)}{S_{\delta}(\vec{t})^{n}\Psi(\tau_{1},\dots,\tau_{d})(s_{n\delta},n)}%
=\frac{S_{\delta}(\vec{\tau})^{n}}{S_{\delta}(\vec{t})^{n}},
\]
the last equality being due to the fact that $\Psi(t_{1},\dots,t_{d}%
)=\Psi(\tau_{1},\dots,\tau_{d})$. Therefore, we have the inequality
\[
\frac{1}{\dim(V(n\delta))}\leq\frac{S_{\delta}(\vec{\tau})^{n}}{S_{\delta
}(\vec{t})^{n}}\leq\dim(V(n\delta)).
\]
Since $\dim(V(n\delta))$ is polynomial in $n$, necessarily $S_{\delta}(\vec
{t})=S_{\delta}(\vec{\tau})$.
\end{proof}

The proof of the injectivity uses the combinatorics of Littelmann paths and we assume that the reader is familiar with this theory. We refer to \cite{Lit1} for an introduction to the operator $f_{\alpha},\alpha\in S$ which are used in the following proofs. We
recall that $B(\delta)$ denotes the set of Littelmann paths obtained from a
path $\pi_{0}$ in $\Delta$ of weight $\delta$. We introduce moreover the
following decomposition of a $\delta$-admissible subset $S^{\prime}\subset S$.

\begin{definition}
Let $S^{\prime}\subset S$ be $\delta$-admissible. A Dynkin subchain of type
$\alpha$ and length $r$ is a sequence $(\alpha_{1},\dots,\alpha_{r})$ of
simple roots in $S^{\prime}$ such that $\alpha_{1}=\alpha$, $\langle\alpha
_{r},\delta\rangle\not =0$ and $\langle\alpha_{i},\alpha_{i+1}\rangle\not =0$
for $1\leq i\leq r-1$. The depth $d_{S^{\prime},\delta}(\alpha)$ of
$\alpha$ relatively to $S^{\prime}$ is the smallest integer corresponding to
the length of a Dynkin subchain of type $\alpha$.
\end{definition}

Note that any simple root of a $\delta$-admissible subset admits at least one
Dynkin subchain, since it belongs to an indecomposable root system which is
not orthogonal to $\delta$.

\begin{lemma}
\label{existenceHighestWeightinit} Let $\lambda\in P^{+}$ and $\alpha\in S$ be
such that $\langle\lambda,\alpha\rangle\not =0$. Then there exists $(\mu
,n)\in\hat{T}^{+}_{\lambda}$ such that $n\lambda-\mu=\alpha$.
\end{lemma}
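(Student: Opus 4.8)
The plan is to prove the (slightly stronger) statement that one may always take $n=2$ and $\mu=2\lambda-\alpha$; that is, $V(2\lambda-\alpha)$ occurs as an irreducible component of $V(\lambda)^{\otimes 2}$. This immediately gives the lemma, since then $(s_{2\lambda-\alpha},2)\in\hat{T}^{+}_{\lambda}$ and $2\lambda-(2\lambda-\alpha)=\alpha$. (Note that $n=1$ can never work, as $V(\lambda)$ is irreducible and $\lambda-\alpha\neq\lambda$.) The very first step is to record that $\langle\lambda,\alpha^{\vee}\rangle$ is a positive integer: it is a nonnegative integer because $\lambda\in P^{+}$, and it is nonzero because $\langle\lambda,\alpha\rangle\neq0$. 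From this, $2\lambda-\alpha$ is dominant: for a simple root $\beta\neq\alpha$ one has $\langle 2\lambda-\alpha,\beta^{\vee}\rangle=2\langle\lambda,\beta^{\vee}\rangle-\langle\alpha,\beta^{\vee}\rangle\geq 0$ since $\langle\alpha,\beta^{\vee}\rangle\leq 0$, while $\langle 2\lambda-\alpha,\alpha^{\vee}\rangle=2\langle\lambda,\alpha^{\vee}\rangle-2\geq0$.

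The heart of the argument is an elementary weight-multiplicity count. Since $\alpha$ is simple and $\langle\lambda,\alpha^{\vee}\rangle\geq1$, the weight $\lambda-\alpha$ occurs in $V(\lambda)$ with multiplicity exactly $1$ (the standard $\mathfrak{sl}_2$-string argument, or Freudenthal's formula), and likewise $2\lambda-\alpha$ occurs in $V(2\lambda)$ with multiplicity $1$ because $\langle 2\lambda,\alpha^{\vee}\rangle\geq2$. Now compute the multiplicity of the weight $2\lambda-\alpha$ in $V(\lambda)^{\otimes 2}$ as $\sum_{\nu}m_{\nu}(V(\lambda))\,m_{2\lambda-\alpha-\nu}(V(\lambda))$: a contributing $\nu$ must satisfy $\lambda-\alpha\preceq\nu\preceq\lambda$, and because $\alpha$ is a simple root this forces $\nu\in\{\lambda,\lambda-\alpha\}$; each choice contributes $1\cdot1$, so this multiplicity equals $2$. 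On the other hand, writing $V(\lambda)^{\otimes2}=\bigoplus_{\sigma}m_{\sigma}V(\sigma)$, the only $\sigma$ that can contain the weight $2\lambda-\alpha$ are those with $2\lambda-\alpha\preceq\sigma\preceq2\lambda$, hence again $\sigma\in\{2\lambda,2\lambda-\alpha\}$; the weight $2\lambda-\alpha$ has multiplicity $1$ in each of $V(2\lambda)$ and $V(2\lambda-\alpha)$, and $m_{2\lambda}=1$ is the Cartan component. Therefore $2=1+m_{2\lambda-\alpha}$, i.e. $m_{2\lambda-\alpha}=1>0$, so $V(2\lambda-\alpha)\subseteq V(\lambda)^{\otimes2}$ as required.

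In terms of the Littelmann path model this is simply the statement that the concatenation $B(\lambda)\ast B(\lambda)$ contains a dominant (highest-weight) path of weight $2\lambda-\alpha$ — for instance one obtained from the straight-line path $\pi_{\lambda}\ast\pi_{\lambda}$ by a single application of the root operator $f_{\alpha}$ — and the multiplicity count above is just a clean way to certify the existence of such a path. The only place where anything must be checked is the multiplicity-one statements for $\lambda-\alpha$ in $V(\lambda)$ and for $2\lambda-\alpha$ in $V(2\lambda)$, which is exactly where the simplicity of $\alpha$ and the inequality $\langle\lambda,\alpha^{\vee}\rangle\geq1$ enter; the rest is bookkeeping in the root lattice, so I do not anticipate any real obstacle.
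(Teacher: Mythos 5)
Your proof is correct, but it takes a genuinely different route from the paper, and in fact proves something sharper. The paper argues inside the Littelmann path model: starting from the dominant path $\pi_{0}$ of weight $\lambda$, it checks that $f_{\alpha}(\pi_{0})=\pi_{0}-v\alpha$ only dips by at most $\langle\alpha,\alpha\rangle$ in the $\alpha$-direction and not at all in the other simple directions, and then concatenates $n-1$ copies of $\pi_{0}$ in front so that $\pi_{0}^{\ast(n-1)}\ast f_{\alpha}(\pi_{0})$ stays in $\Delta$; this needs $\langle(n-1)\lambda,\alpha\rangle\geq\langle\alpha,\alpha\rangle$, so the $n$ produced depends on $\lambda$ and $\alpha$. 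Your argument replaces this by an elementary character-theoretic count showing $n=2$ always works: since $\alpha$ is simple, the only weights $\nu$ with $\lambda-\alpha\preceq\nu\preceq\lambda$ are $\lambda$ and $\lambda-\alpha$, each of multiplicity one in $V(\lambda)$ (using $\langle\lambda,\alpha^{\vee}\rangle\geq1$), so the weight $2\lambda-\alpha$ has multiplicity $2$ in $V(\lambda)^{\otimes2}$ while it has multiplicity $1$ in the Cartan component $V(2\lambda)$, forcing $V(2\lambda-\alpha)$ to occur; the dominance check for $2\lambda-\alpha$ is also right. What each approach buys: yours is shorter, self-contained, and gives the uniform bound $n=2$; the paper's path-theoretic argument is what actually gets recycled in the proof of Lemma \ref{existenceHighestWeight}, where the freedom to take $m$ large so that $\pi^{\ast m}\ast f_{\alpha}(\pi)$ stays in the chamber is invoked again for a general dominant $\lambda'$, so the flexibility in $n$ is not wasted there. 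One small caution about your closing aside: by the concatenation rule $f_{\alpha}(\pi_{\lambda}\ast\pi_{\lambda})=f_{\alpha}(\pi_{\lambda})\ast\pi_{\lambda}$ (since $\varphi_{\alpha}(\pi_{\lambda})>\varepsilon_{\alpha}(\pi_{\lambda})=0$), and this path is not annihilated by $e_{\alpha}$, hence is not the dominant path of weight $2\lambda-\alpha$; the dominant representative is rather of the form $\pi_{\lambda}\ast f_{\alpha}(\pi_{\lambda})$ (the paper's construction). This does not affect your proof, whose substance is the multiplicity count.
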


\begin{proof}
Suppose that $\langle\lambda,\alpha\rangle>0$, and thus $\lambda-\alpha\in
\Pi_{\lambda}$. We denote by $\pi_{0}$ the Littelmann path of $B(\lambda)$
with weight $\lambda$. Then, $wt(f_{\alpha}(\pi_{0}))=\lambda-\alpha$.
Moreover, since $f_{\alpha}(\pi_{0})=\pi_{0}-v\alpha$ with
$v:[0,1]\longrightarrow\lbrack0,1]$, we have
\[
\langle f_{\alpha}(\pi_{0})(t),\alpha\rangle=\langle\pi_{0}(t),\alpha
\rangle-v(t)\langle\alpha,\alpha\rangle\geq-\langle\alpha,\alpha\rangle,
\]
and for all simple root $\alpha^{\prime}\not =\alpha$ and $t\in\lbrack0,1]$,
we have
\[
\langle f_{\alpha}(\pi_{0})(t),\alpha^{\prime}\rangle=\langle\pi_{0}%
(t),\alpha^{\prime}\rangle-v(t)\langle\alpha^{\prime},\alpha\rangle\geq0,
\]
because $\pi_{0}$ lies in the Weyl chamber $\Delta$ and $\langle\alpha
,\alpha^{\prime}\rangle\leq0$. Consider an integer $n\geq2$ such that
$\langle(n-1)\lambda,\alpha\rangle\geq\langle\alpha,\alpha\rangle$. Then, from
the two previous inequalities, $\pi_{0}^{\ast(n-1))}\ast f_{\alpha}(\pi_{0})$
lies in $\Delta$: thus, $wt\big(\pi_{0}^{\ast(n-1)}\ast f_{\alpha}(\pi
_{0})\big)=(n-1)\lambda+(\lambda-\alpha)$ is the highest weight of an irreducible
component of $V(\lambda)^{\otimes n}$, and $((n-1)\lambda+(\lambda
-\alpha),n)=(n\lambda-\alpha,n)\in\hat{T}_{\lambda}^{+}$: setting
$\mu=n\lambda-\alpha$ gives the result.
\end{proof}

\bigskip

The latter result can be generalized along a Dynkin subchain and yields the
following Lemma:

\begin{lemma}
\label{existenceHighestWeight} Let $S^{\prime}\subset S$ be $\delta
$-admissible and let $\alpha_{0}\in S^{\prime}$. There exists $(\lambda
,n)\in\hat{P}^{+}_{\delta}$ such that $n\delta-\lambda=\alpha_{0}%
+\sum_{\substack{\alpha^{\prime}\in S^{\prime}\\d(\alpha^{\prime})<
d(\alpha_{0})}}k_{\alpha^{\prime}}\alpha^{\prime}$.
\end{lemma}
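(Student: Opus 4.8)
The plan is to induct on the depth $d:=d_{S',\delta}(\alpha_{0})$ of $\alpha_{0}$ relatively to $S'$, using Lemma~\ref{existenceHighestWeightinit} as the basic tool which displaces a single simple root at the cost of passing to a tensor power. For the base case $d=1$ one has $\langle\alpha_{0},\delta\rangle\neq0$, hence $\langle\delta,\alpha_{0}^{\vee}\rangle>0$ since $\delta$ is dominant; applying Lemma~\ref{existenceHighestWeightinit} with $\lambda:=\delta$ and $\alpha:=\alpha_{0}$ produces $(\mu,n)\in\hat{T}^{+}_{\delta}$ with $n\delta-\mu=\alpha_{0}$, and since the sum over roots of depth $<1$ is empty this is exactly the assertion.

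For the inductive step, assume $d\geq2$ and fix a Dynkin subchain $(\alpha_{0}=\beta_{1},\beta_{2},\dots,\beta_{d})$ of minimal length $d$. Then $(\beta_{2},\dots,\beta_{d})$ is a Dynkin subchain of type $\beta_{2}$, so $d_{S',\delta}(\beta_{2})\leq d-1$; a short argument (splice $\beta_{2}$ out of the chain when $\beta_{2}=\alpha_{0}$, the case $d=2$ being immediate from $\langle\beta_{2},\delta\rangle\neq0$) shows $\beta_{2}\neq\alpha_{0}$, and together with $\langle\beta_{1},\beta_{2}\rangle\neq0$ this gives $\langle\beta_{2},\alpha_{0}^{\vee}\rangle<0$. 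The induction hypothesis applied to $\beta_{2}$ furnishes $(\lambda',n')$ with $V(\lambda')$ an irreducible component of $V(\delta)^{\otimes n'}$ and
\[
n'\delta-\lambda'=\beta_{2}+\sum_{\substack{\alpha'\in S'\\ d_{S',\delta}(\alpha')<d_{S',\delta}(\beta_{2})}}k'_{\alpha'}\,\alpha',\qquad k'_{\alpha'}\in\mathbb{Z}_{\geq0}.
\]
Every root on the right-hand side has depth $<d$, so $\alpha_{0}$ does not occur there; pairing with $\alpha_{0}^{\vee}$, each term $\langle\alpha',\alpha_{0}^{\vee}\rangle$ is $\leq0$ while the $\beta_{2}$-term is $<0$, whence $\langle n'\delta-\lambda',\alpha_{0}^{\vee}\rangle<0$ and therefore $\langle\lambda',\alpha_{0}^{\vee}\rangle>n'\langle\delta,\alpha_{0}^{\vee}\rangle\geq0$. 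Now apply Lemma~\ref{existenceHighestWeightinit} to the dominant weight $\lambda'$ and the simple root $\alpha_{0}$: it yields $(\mu,k)\in\hat{T}^{+}_{\lambda'}$ with $k\lambda'-\mu=\alpha_{0}$. Since $V(\lambda')\subset V(\delta)^{\otimes n'}$ forces $V(\mu)\subset V(\lambda')^{\otimes k}\subset V(\delta)^{\otimes n'k}$, the pair $(\mu,n'k)$ belongs to $\hat{P}^{+}_{\delta}$, and
\[
n'k\,\delta-\mu=k\,(n'\delta-\lambda')+(k\lambda'-\mu)=\alpha_{0}+k\beta_{2}+\sum_{\alpha'}k\,k'_{\alpha'}\,\alpha',
\]
where $\beta_{2}$ and all the $\alpha'$ lie in $S'$ and have depth $<d=d_{S',\delta}(\alpha_{0})$. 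Taking $(\lambda,n)=(\mu,n'k)$ closes the induction.

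The steps I am deliberately skipping are routine: that a minimal Dynkin subchain has no repeated entries (hence $\beta_{2}\neq\alpha_{0}$), and the elementary sign bookkeeping for inner products of distinct simple roots. The point that genuinely must be handled with care, and is the crux of the argument, is the depth accounting: one has to know that the expansion of $n'\delta-\lambda'$ coming from the induction hypothesis involves only roots of $S'$ of depth strictly below $d$, for this is precisely what forces $\alpha_{0}$ to appear in $n'k\,\delta-\mu$ with coefficient exactly $1$ and every other root occurring there to have depth $<d$ — that is, what makes the induction strictly decrease in the intended parameter.
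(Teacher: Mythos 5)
Your proof is correct and follows essentially the same route as the paper: induction on the depth of $\alpha_{0}$ along a minimal Dynkin subchain, with Lemma \ref{existenceHighestWeightinit} providing both the base case and the step that subtracts $\alpha_{0}$ from a multiple of $\lambda'$. The only (harmless) differences are that you invoke that lemma as a black box for $\lambda'$ and return to tensor powers of $V(\delta)$ via $V(\mu)\subset V(\lambda')^{\otimes k}\subset V(\delta)^{\otimes n'k}$, whereas the paper reruns the Littelmann-path argument ($\pi^{\ast m}\ast f_{\alpha}(\pi)$) inside the induction, and you obtain $\langle\lambda',\alpha_{0}^{\vee}\rangle>0$ from the signs $\langle\alpha',\alpha_{0}\rangle\leq 0$, $\langle\beta_{2},\alpha_{0}\rangle<0$ rather than the paper's exact orthogonality claim for roots of smaller depth.
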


\begin{proof}
Let $S^{\prime}\subset S$ be a $\delta$-admissible subset. We will prove the
result by induction on the depth of the simple root. For $d(\alpha)=1$, the
result is given by Lemma \ref{existenceHighestWeightinit}. Let
$i\geq2$. Suppose that the result is proven for all roots of depth at most
$i-1$, and let $\alpha$ be a root in $S^{\prime}$ of depth $i$. Let
$(\alpha,\alpha_{2},\dots,\alpha_{i})$ be a Dynkin chain of minimal length for
$\alpha$: by minimality, $\alpha_{j}$ has depth $i-j+1$ for $2\leq j\leq i$.
Since $d(\alpha_{2})=i-1$, there exists $(\lambda^{\prime},l)\in\hat
{T}_{\delta}^{+}$ such that $l\delta-\lambda^{\prime}=\alpha_{2}%
+\sum_{d(\alpha^{\prime})<d(\alpha_{2})}k_{\alpha^{\prime}}\alpha^{\prime}$.
If $\alpha^{\prime}$ is such that $d(\alpha^{\prime})<d(\alpha_{2})$, then
necessarily $\langle\alpha,\alpha^{\prime}\rangle=0$ (otherwise, there would
exist a Dynkin subchain of type $\alpha$ and length smaller than $i$);
likewise, since $d(\alpha)\geq2$, $\langle\delta,\alpha\rangle=0$. Thus,
\[
\langle\lambda^{\prime},\alpha\rangle=\left\langle l\delta-\alpha_{2}%
-\sum_{d(\alpha^{\prime})<d(\alpha_{2})}k_{\alpha^{\prime}}\alpha^{\prime
},\alpha\right\rangle =-\langle\alpha_{2},\alpha\rangle>0.
\]
Let $\pi$ be a Littelmann path in $B(\delta)^{\otimes l}$ lying in $\Delta$
and having weight $\lambda^{\prime}$. We consider $\pi$ as the Littelmann path
of highest weight for the irreducible representation $V(\lambda^{\prime})$.
Since $\langle\alpha,\lambda^{\prime}\rangle>0$, $\lambda^{\prime}-\alpha$ is
a weight of $V(\lambda^{\prime})$. Applying Lemma
\ref{existenceHighestWeightinit} yields the existence of $m\geq1$ such that
$\pi^{\ast m}\ast f_{\alpha}(\pi)$ lies in the Weyl chamber. Thus, $\pi^{\ast
m}\ast f_{\alpha}(\pi)$ correspond to a highest weight vector in
$(V(\delta)^{\otimes l})^{\otimes m}$. On the other hand,
\[
wt(\pi^{\ast m}\ast f_{\alpha}(\pi))=m\lambda^{\prime}-\alpha=lm\delta
-\alpha-m\alpha_{2}-m\sum_{d(\alpha^{\prime})<d(\alpha_{2})}k_{\alpha^{\prime}%
}\alpha^{\prime}=lm\delta-\alpha-\sum_{d(\alpha^{\prime})<d(\alpha)}%
k_{\alpha^{\prime}}^{\prime}\alpha^{\prime},
\]
with $k_{\alpha}^{\prime}\geq0$. Setting $\lambda=lm\delta-\alpha
-\sum_{d(\alpha^{\prime})<d(\alpha)}k_{\alpha^{\prime}}^{\prime}\alpha
^{\prime}$ and $n=lm$, we get an element $(\lambda,n)\in\hat{T_{\delta}}^{+}$
satisfying the hypothesis of the Lemma.
\end{proof}

\begin{corollary}
\label{differentSubset} Let $(t_{1},\dots,t_{d}),(\tau_{1},\dots,\tau_{d}%
)\in\lbrack0,1]_{\delta}^{d}$ and $1\leq i\leq d$ be such that $t_{i}=0$ and
$\tau_{i}\not =0$. Then, $\Psi(t_{1},\dots,t_{d})\not =\Psi(\tau_{1}%
,\dots,\tau_{d})$.
\end{corollary}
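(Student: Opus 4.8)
The plan is to produce a single basis vector $(s_{\lambda},n)\in\hat{T}_{\delta}^{+}$ on which the morphism $f_{\vec{t}}=\Psi^{+}(\vec{t})$ vanishes while $f_{\vec{\tau}}=\Psi^{+}(\vec{\tau})$ is strictly positive; by Corollary \ref{firstExpression} it then suffices to evaluate the polynomials $S_{\lambda,n\delta}$ at $\vec{t}$ and at $\vec{\tau}$. First I would set $S'=\mathbf{0}^{c}(\vec{\tau})=\{\alpha_{j}\mid\tau_{j}\neq0\}$. Since $\vec{\tau}\in[0,1]_{\delta}^{d}$, this set is $\delta$-admissible, and since $\tau_{i}\neq0$ we have $\alpha_{i}\in S'$, so Lemma \ref{existenceHighestWeight} applies to $S'$ and $\alpha_{0}=\alpha_{i}$ and yields $(\lambda,n)\in\hat{T}_{\delta}^{+}$ with
\[
n\delta-\lambda=\alpha_{i}+\sum_{\substack{\alpha'\in S'\\ d_{S',\delta}(\alpha')<d_{S',\delta}(\alpha_{i})}}k_{\alpha'}\alpha',\qquad k_{\alpha'}\ge0 .
\]
The crucial point is that $\alpha_{i}$ occurs in $n\delta-\lambda$ with coefficient exactly $1$, because a root of depth $d_{S',\delta}(\alpha_{i})$ cannot be among the $\alpha'$ appearing in the sum.

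Next I would evaluate the two morphisms on $(s_{\lambda},n)$. Since $S_{\delta}(\vec{t})\ge1>0$ and $S_{\delta}(\vec{\tau})\ge1>0$ (the term $\gamma=\delta$ contributes $1$), both fractions $f_{\vec{t}}(s_{\lambda},n)=S_{\lambda,n\delta}(\vec{t})/S_{\delta}(\vec{t})^{n}$ and $f_{\vec{\tau}}(s_{\lambda},n)=S_{\lambda,n\delta}(\vec{\tau})/S_{\delta}(\vec{\tau})^{n}$ make sense. Keeping only the term $\gamma=\lambda$ (of multiplicity one) in $S_{\lambda,n\delta}(\vec{\tau})=\sum_{\gamma\in\Pi_{\lambda}}K_{\lambda,\gamma}\vec{\tau}^{\,n\delta-\gamma}$ gives $S_{\lambda,n\delta}(\vec{\tau})\ge\vec{\tau}^{\,n\delta-\lambda}$; every simple root occurring in $n\delta-\lambda$ lies in $S'$, hence the corresponding coordinates of $\vec{\tau}$ are nonzero, so $\vec{\tau}^{\,n\delta-\lambda}>0$ and $f_{\vec{\tau}}(s_{\lambda},n)>0$. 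Conversely, for any $\gamma\in\Pi_{\lambda}$ one has $n\delta-\gamma=(n\delta-\lambda)+(\lambda-\gamma)\in Q^{+}$, and as $\lambda-\gamma\in Q^{+}$ the coefficient of $\alpha_{i}$ in $n\delta-\gamma$ is at least $1$; since $t_{i}=0$, every monomial $\vec{t}^{\,n\delta-\gamma}$ is $0$, so $S_{\lambda,n\delta}(\vec{t})=0$ and $f_{\vec{t}}(s_{\lambda},n)=0$. Hence $\Psi^{+}(\vec{t})(s_{\lambda},n)=0\neq\Psi^{+}(\vec{\tau})(s_{\lambda},n)$, which gives $\Psi^{+}(\vec{t})\neq\Psi^{+}(\vec{\tau})$.

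I expect the only real difficulty to be the construction of $(\lambda,n)$, that is, Lemma \ref{existenceHighestWeight}: one needs a highest weight inside $\hat{T}_{\delta}^{+}$ for which $\alpha_{i}$ enters $n\delta-\lambda$ with coefficient exactly one while every other simple root entering $n\delta-\lambda$ still belongs to $\mathbf{0}^{c}(\vec{\tau})$. That lemma is obtained by propagating the root operator $f_{\alpha_{i}}$ along a minimal Dynkin subchain, the depth condition being precisely what prevents the coefficient of $\alpha_{i}$ from being inflated; once it is in hand, the remainder is the short vanishing/non-vanishing bookkeeping carried out above, and no appeal to Corollary \ref{equalConstant} is needed here.
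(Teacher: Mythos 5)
Your proposal is correct and follows essentially the same route as the paper: apply Lemma \ref{existenceHighestWeight} to $S'=\mathbf{0}^{c}(\vec{\tau})$ and $\alpha_{i}$ to get $(\lambda,n)\in\hat{T}_{\delta}^{+}$, then observe $S_{\lambda,n\delta}(\vec{\tau})\geq\vec{\tau}^{\,n\delta-\lambda}>0$ while every monomial in $S_{\lambda,n\delta}(\vec{t})$ contains the factor $t_{i}=0$. The only cosmetic difference is that you stress the coefficient of $\alpha_{i}$ being exactly $1$, whereas all that is used (as in the paper) is that it is positive and that every simple root occurring in $n\delta-\lambda$ lies in $\mathbf{0}^{c}(\vec{\tau})$.
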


\begin{proof}
Note that $\mathbf{0}_{\vec{t}}^{c}$ and $\mathbf{0}_{\vec{\tau}}^{c}$ are
$\delta$-admissible subsets by definition of $[0,1]_{\delta}^{d}$. Since $\tau
_{i}\not =0$, $i\in\mathbf{0}_{\vec{\tau}}^{c}$. Thus, by Lemma
\ref{existenceHighestWeight}, there exists $(\lambda,n)\in\hat{T}_{\delta}%
^{+}$ such that $\lambda=n\delta-\alpha_{i}-\sum_{\substack{j\in \mathbf{0}_{\vec{\tau}}^{c}\\d(\alpha_{j})<d(\alpha_{i})}}k_{\alpha_{j}}\alpha_{j}$. Since $\tau_{j}>0$ for all
$j\in\mathbf{0}_{\tau}^{c}$,
\[
\Psi(\tau_{1},\dots,\tau_{d})(s_{\lambda},n)=\frac{S_{\lambda,n\delta}%
(\vec{\tau})}{S_{\delta}(\vec{\tau})^{n}}\geq\frac{\vec{\tau}^{n\delta
-\lambda}}{S_{\delta}(\vec{\tau})^{n}}=\frac{1}{S_{\delta}(\vec{\tau})^{n}%
}\tau_{i}\prod_{\substack{j\in\mathbf{0}_{\vec{\tau}}^{c}\\d(\alpha
_{j})<d(\alpha_{i})}}\tau_{j}^{k_{\alpha_{j}}}>0.
\]
On the other hand, any weight of $V(\lambda)$ has the form $\lambda
-\sum_{\alpha\in S}r_{\alpha}\alpha$ for some integer coefficients $r_{\alpha
}\geq0$; thus, since $t_{i}=0$, for any weight $\mu=\lambda-\sum_{\alpha\in
S}r_{\alpha}\alpha$ of $V(\lambda)$ we have
\[
\vec{t}^{n\delta-\mu}=t_{i}\prod_{\substack{j\in\mathbf{0}_{\vec{\tau}}%
^{c}\\d(\alpha_{j})<d(\alpha_{i})}}t_{j}^{k_{\alpha_{j}}}\prod_{\alpha_{j}\in
S}t_{j}^{r_{\alpha_{j}}}=0.
\]
Thus, $\Psi(t_{1},\dots,t_{d})(s_{\lambda},n)=0\not =\Psi(\tau_{1},\dots
,\tau_{d})$. This yields that $\Psi(t_{1},\dots,t_{d})\not =\Psi(\tau
_{1},\dots,\tau_{d})$.
\end{proof}

\begin{proposition}
\label{injectivity} The map $\Psi^{+}$ is injective.
\end{proposition}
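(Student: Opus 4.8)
The plan is to prove that $\Psi^{+}(\vec{t})=\Psi^{+}(\vec{\tau})$ forces $\vec{t}=\vec{\tau}$, in three stages: match the zero patterns of $\vec t$ and $\vec\tau$, match the common value of $S_{\delta}$, and then recover each nonzero coordinate by an induction on depth, using Lemma~\ref{existenceHighestWeight} together with the two-sided estimate of Lemma~\ref{lemma_encadre} applied to high tensor powers.

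First I would invoke Corollary~\ref{differentSubset}: if $t_{i}=0$ and $\tau_{i}\neq 0$ for some $i$, then the two morphisms already differ on some $(s_{\lambda},n)\in\hat{T}_{\delta}^{+}$, contradicting $\Psi^{+}(\vec t)=\Psi^{+}(\vec\tau)$. Hence we may assume $\mathbf{0}^{c}(\vec t)=\mathbf{0}^{c}(\vec\tau)=:S'$, a $\delta$-admissible subset, so that $t_{i}=\tau_{i}=0$ for $i\notin S'$ and it remains only to prove $t_{i}=\tau_{i}$ for $i\in S'$. Next, Corollary~\ref{equalConstant} gives $S_{\delta}(\vec t)=S_{\delta}(\vec\tau)$; since moreover $S_{\delta}(\vec t)\ge 1>0$ and $f_{\vec t}(s_{\mu},m)=f_{\vec\tau}(s_{\mu},m)$ for all $(s_{\mu},m)\in\hat{T}_{\delta}^{+}$ while the denominators $S_{\delta}(\cdot)^{m}$ coincide, we obtain
\[
S_{\mu,m\delta}(\vec t)=S_{\mu,m\delta}(\vec\tau)\qquad\text{for all }(s_{\mu},m)\in\hat{T}_{\delta}^{+}.
\]

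The core step recovers, for each $\alpha_{i}\in S'$, the monomial $\vec t^{\,n\delta-\lambda}$ attached to it. Fix $\alpha_{i}\in S'$ and apply Lemma~\ref{existenceHighestWeight} to $S'$: there is $(\lambda,n)\in\hat{T}_{\delta}^{+}$ with $n\delta-\lambda=\alpha_{i}+\sum_{j\in S',\ d(\alpha_{j})<d(\alpha_{i})}k_{j}\alpha_{j}$ (with $d=d_{S',\delta}$), so $\vec t^{\,n\delta-\lambda}=t_{i}\prod_{d(\alpha_{j})<d(\alpha_{i})}t_{j}^{k_{j}}>0$, all factors being nonzero since the indices lie in $S'$. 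For every $N\ge 1$ the pair $(N\lambda,Nn)$ also belongs to $\hat{T}_{\delta}^{+}$ ($V(N\lambda)$ is the Cartan component of $V(\lambda)^{\otimes N}\subset V(\delta)^{\otimes nN}$), and $\vec t^{\,Nn\delta-N\lambda}=(\vec t^{\,n\delta-\lambda})^{N}$. Combining $S_{N\lambda,Nn\delta}(\vec t)=S_{N\lambda,Nn\delta}(\vec\tau)$ with Lemma~\ref{lemma_encadre} applied to $(N\lambda,Nn)$ yields
\[
\frac{1}{\dim V(N\lambda)}\le\left(\frac{\vec\tau^{\,n\delta-\lambda}}{\vec t^{\,n\delta-\lambda}}\right)^{\!N}\le\dim V(N\lambda),
\]
and since $\dim V(N\lambda)$ is polynomial in $N$ by the Weyl dimension formula, taking $N$-th roots and letting $N\to\infty$ forces $\vec t^{\,n\delta-\lambda}=\vec\tau^{\,n\delta-\lambda}$, i.e. $t_{i}\prod_{d(\alpha_{j})<d(\alpha_{i})}t_{j}^{k_{j}}=\tau_{i}\prod_{d(\alpha_{j})<d(\alpha_{i})}\tau_{j}^{k_{j}}$. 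Now an induction on $d(\alpha_{i})$ finishes the argument: for $d(\alpha_{i})=1$ the sum is empty, so $t_{i}=\tau_{i}$; for larger depth, every $t_{j},\tau_{j}$ occurring is nonzero and already equal by the induction hypothesis, so again $t_{i}=\tau_{i}$. Hence $\vec t=\vec\tau$ and $\Psi^{+}$ is injective.

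I expect the core step to be the main obstacle: Lemma~\ref{lemma_encadre} only provides inequalities, not an exact value of $S_{\lambda,n\delta}$, so the exact identity $\vec t^{\,n\delta-\lambda}=\vec\tau^{\,n\delta-\lambda}$ must be extracted asymptotically, by passing to large tensor powers and exploiting the merely polynomial growth of $\dim V(N\lambda)$. Lemma~\ref{existenceHighestWeight} is equally essential, since it is exactly what supplies a dominant weight whose defect $n\delta-\lambda$ involves $\alpha_{i}$ together with strictly lower-depth roots only, which is what makes the induction on depth run.
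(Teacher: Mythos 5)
Your proof is correct and follows essentially the same route as the paper: matching zero sets via Corollary~\ref{differentSubset}, equating $S_{\delta}$ via Corollary~\ref{equalConstant}, and then an induction on the depth of the simple roots using Lemma~\ref{existenceHighestWeight} together with the two-sided bounds of Lemma~\ref{lemma_encadre} on high tensor powers, where polynomial growth of $\dim V(N\lambda)$ forces the monomial equality. The only (immaterial) difference is that you extract the exact equality $\vec t^{\,n\delta-\lambda}=\vec\tau^{\,n\delta-\lambda}$ first and then cancel lower-depth factors by induction, whereas the paper substitutes the induction hypothesis into the ratio before passing to the limit.
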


\begin{proof}
Let $(t_{1},\dots,t_{d}),(\tau_{1},\dots,\tau_{d})\in\lbrack0,1]_{\delta}^{d}$ be
such that $\Psi(t_{1},\dots,t_{d})=\Psi(\tau_{1},\dots,\tau_{d})$. In this
case, Corollary \ref{equalConstant} yields that $S_{\delta}(\vec{\tau
})=S_{\delta}(\vec{t})$. By Corollary \ref{differentSubset}, we can assume
that $\mathbf{0}_{\vec{t}}^{c}=\mathbf{0}_{\vec{\tau}}^{c}$, and we will
denote this set $S^{\prime}$: we recall that the set of simple roots is
identified with $\{1,\dots,d\}$, so that $S^{\prime}$ corresponds to a $\delta
$-admissible subset of $S$. We will prove by induction on the depth of the
simple root $\alpha_{j}$ with respect to $S^{\prime}$ that $t_{j}=\tau_{j}%
$. Suppose that $\alpha_{j}\in S^{\prime}$ is such that $d(\alpha
_{j})=1$. By Lemma \ref{existenceHighestWeight}, there exists $n\geq1$ such
that $(n\delta-\alpha_{j},n)\in\hat{T}_{\delta}^{+}$. Thus, $(nk\delta
-k\alpha_{j},kn)\in\hat{T}_{\delta}^{+}$ for all $k\geq1$. Since $\Psi
(t_{1},\dots,t_{d})=\Psi(\tau_{1},\dots,\tau_{d})$, we have
\[
\frac{1}{S_{\delta}(\vec{t})^{kn}}S_{kn\delta-k\alpha_{j},kn\delta}(\vec
{t})=\frac{1}{S_{\delta}(\vec{\tau})^{kn}}S_{kn\delta-k\alpha_{j},kn\delta
}(\vec{\tau}),
\]
which simplifies into $S_{kn\delta-k\alpha_{j},kn\delta}(\vec{t}%
)=S_{kn\delta-k\alpha_{j},kn\delta}(\vec{\tau})$ because $S_{\delta}(\vec
{t})=S_{\delta}(\vec{\tau})$. By Lemma \ref{lemma_encadre}, we have
\[
1\leq\frac{S_{kn\delta-k\alpha_{j},kn\delta}(\vec{t})}{t_{j}^{k}}\leq\dim
V(kn\delta-k\alpha_{j})\text{ and }1\leq\frac{S_{kn\delta-k\alpha_{j}%
,kn\delta}(\vec{\tau})}{\tau_{j}^{k}}\leq\dim V(kn\delta-k\alpha_{j}).
\]
Thus,
\[
\frac{1}{\dim V(kn\delta-k\alpha_{j})}\leq\frac{t_{j}^{k}}{\tau_{j}^{k}}%
\leq\dim V(kn\delta-k\alpha_{j}).
\]
Since $\dim V(kn\delta-k\alpha_{j})$ is polynomial in $k$, necessarily
$t_{j}=\tau_{j}$. Let $i\geq2$, and suppose that we have proven that
$t_{j}=\tau_{j}$ for all $j$ such that $d(\alpha_{j})<i$. Let $\alpha_{l}$ be
such that $d(\alpha_{l})=i$. By Lemma \ref{existenceHighestWeight}, there
exists $(\lambda,n)\in\hat{T}_{\delta}^{+}$ such that $n\delta-\lambda
=\alpha_{l}+\sum_{\substack{\alpha^{\prime}\in S^{\prime}\\d(\alpha^{\prime
})<d(\alpha)}}k_{\alpha^{\prime}}\alpha^{\prime}$, with $k_{\alpha^{\prime}%
}\geq0$. Thus, for all $k\geq1$, $(k\lambda,kn)\in\hat{T}_{\delta}^{+}$. As in
the initial case, this implies that
\[
S_{k\lambda,kn\delta}(\vec{t})=S_{k\lambda,kn\delta}(\vec{\tau}),
\]
yielding together with Lemma \ref{lemma_encadre} the inequality
\begin{equation}
\frac{1}{dimV(k\lambda)}\leq\frac{\vec{t}^{nk\delta-\lambda}}{\vec{\tau
}^{nk\delta-\lambda}}\leq\dim V(k\lambda). \label{inequalitySumWeight}%
\end{equation}
But $nk\delta-k\lambda=k\alpha_{l}+k\sum_{\substack{j\in\mathbf{0}_{\vec{t}%
}^{c}\\d(\alpha_{j})<d(\alpha_{l})}}k_{\alpha_{j}}\alpha_{j}$, and by the
induction hypothesis, $t_{j}=\tau_{j}$ for all $j\in\mathbf{0}_{\vec{t}}%
^{c},d(\alpha_{j})<d(\alpha_{l})$. Thus
\[
\frac{\vec{t}^{nk\delta-\lambda}}{\vec{\tau}^{nk\delta-\lambda}}=\frac
{t_{l}^{k}}{\tau_{l}^{k}}.
\]
Since $\dim V(k\lambda)$ is polynomial in $k$, \eqref{inequalitySumWeight}
yields that $\frac{t_{l}}{\tau_{l}}=1$. This concludes the proof of
Proposition \ref{injectivity}.
\end{proof}

\begin{corollary}
The map $\Psi^{+}$ is a bijection from $[0,1]_{\delta}^{d}$ to $\Mult(\hat
{T}_{\delta}^{+})^{+}$. In particular, $\partial\mathcal{H}_{\infty}%
(\Delta)$ is isomorphic to $[0,1]_{\delta}^{d}$.
\end{corollary}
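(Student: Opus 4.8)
The plan is simply to assemble results already proved. First I would note that surjectivity of $\Psi^{+}$ is exactly Corollary~\ref{firstExpression} (every $f\in\Mult(\hat T_{\delta}^{+})^{+}$ equals $f_{\vec t}$ for some $\vec t\in[0,1]_{\delta}^{d}$), while injectivity is Proposition~\ref{injectivity}. Hence $\Psi^{+}\colon[0,1]_{\delta}^{d}\to\Mult(\hat T_{\delta}^{+})^{+}$ is a bijection, which is the first assertion; nothing more is needed for it.

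For the ``in particular'', I would chain the identifications already established. By the graph embedding of Section~\ref{Sec_generalities}, $\mathcal{H}_{\infty}(\Delta)$ is identified with $\mathcal{H}(\mathcal{G}(\Delta))$, so $\partial\mathcal{H}_{\infty}(\Delta)$ is identified with the set of extreme points of $\mathcal{H}(\mathcal{G}(\Delta))$. Since $\mathcal{G}(\Delta)$ is the multiplicative graph attached to $\hat{\mathcal{A}}_{\delta}$ with $(\hat{\mathcal{A}}_{\delta})_{\mathcal{G}(\Delta)}=\hat T_{\delta}^{+}$, Proposition~\ref{multiGraphExtreme} makes $i^{*}$ an identification between $\Mult(\hat T_{\delta}^{+})^{+}$ and those extreme points. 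Composing $i^{*}$ with the bijection $\Psi^{+}$ then yields the desired bijection $[0,1]_{\delta}^{d}\leftrightarrow\partial\mathcal{H}_{\infty}(\Delta)$, which is the content of the last sentence.

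Finally I would record that $\Psi^{+}$ is continuous, since $\Psi^{+}(\vec t)(s_{\lambda},n)=S_{\lambda,n\delta}(\vec t)/S_{\delta}(\vec t)^{n}$ is a ratio of polynomials in $\vec t$ with $S_{\delta}(\vec t)\geq 1$ on $[0,1]^{d}$ (the term $\gamma=\delta$ alone contributes $1$); for the topology of pointwise evaluation on $\hat T_{\delta}^{+}$ carried over from $M_{1}(\Gamma(\Delta))$ this makes $i^{*}\circ\Psi^{+}$ a continuous bijection. I would not try to promote it to a homeomorphism here, since $[0,1]_{\delta}^{d}$ with the subspace topology from $[0,1]^{d}$ need not even be closed; the genuine topological identification with the compact model $K(\delta)^{+}$ is the object of Section~\ref{SectionMean} and Theorem~\ref{mainresult}, via the parametrization $t\colon K(\delta)\to[0,1]_{\delta}^{d}\times W$. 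In the end essentially all the work of the corollary has been done upstream: the one genuinely hard input is the injectivity statement Proposition~\ref{injectivity}, which rests on the Littelmann-path constructions of Lemmas~\ref{existenceHighestWeightinit}--\ref{existenceHighestWeight}, on the two-sided bound of Lemma~\ref{lemma_encadre}, and on the totally-positive-sequence criterion Theorem~\ref{Th_negative} (used to extend positive morphisms from $\hat T_{\delta}^{+}$ to $\hat T_{\delta}$).
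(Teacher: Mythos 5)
Your proposal is correct and follows the same route the paper intends: the corollary is exactly the assembly of the surjectivity statement (Corollary \ref{firstExpression}, via Proposition \ref{multiRelationWeightCharacter}) with the injectivity statement (Proposition \ref{injectivity}), and the identification of $\partial\mathcal{H}_{\infty}(\Delta)$ with $\Mult(\hat{T}_{\delta}^{+})^{+}$ through the multiplicative-graph description of $\mathcal{G}(\Delta)$ and Proposition \ref{multiGraphExtreme}. Your added remarks on continuity and on deferring the topological identification to Section \ref{SectionMean} are accurate but not needed for the statement as given.
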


\section{Drift of a path following a central measure}\label{SectionMean}

In this section, we identify the set $\{(\vec{t},w)\in[0,1]_{\delta
}^{d}\times W|w\in W^{\mathbf{1}(\vec{t})}\}$ with $K(\delta)$ in order to
complete the proof of Theorem \ref{mainresult}: this identification is done by
considering the mean vector of the random walk given by the map $\Psi$. At the end of this section we prove Corollary \ref{corollary}.

\subsection{The mean vector $\vec{M}$}

Let us introduce the map
\[
\vec{M}:\left\{
\begin{matrix}
\{(\vec{t},w)\in[0,1]_{\delta}^{d}\times W|w\in W^{\mathbf{1}(\vec{t})}\} &
\longrightarrow & K(\delta)\\
\vec{t}\times w & \mapsto & M_{\Psi(\vec{t},w)}%
\end{matrix}
\right.  ,
\]
where the mean vector $M_{f}$ has been introduced in Section $3.2$ for any
multiplicative map $f\in \Mult(\hat{T}_{\delta})^{+}$. For $I\subset
\{1,\dots,d\}$, denote by $W_{I}$ the parabolic subgroup generated by the
simple roots $\alpha_{i}$ for $i\in I$.

\begin{lemma}
\label{positionDrift} Let $(\vec{t},w)\in\lbrace(\vec{t},w)\in[0,1]_{\delta
}^{d}\times W\vert w\in W^{\mathbf{1}(\vec{t})}\rbrace$. Then $M_{\Psi(\vec{t}%
,w)}\in w^{\prime-1}(\Delta)$ if and only if $w^{\prime}\in W_{\mathbf{1}%
(\vec{t})}w$.
\end{lemma}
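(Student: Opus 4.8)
The plan is to unwind the definition of $\Psi$, show that $w(M_{\Psi(\vec t,w)})$ lies in the fundamental Weyl chamber $\Delta$, determine which walls of $\Delta$ contain it, and then use that $\Delta$ is a fundamental domain for $W$. Set $f:=\Psi(\vec t,w)$; by the formula displayed right after Proposition \ref{isomosphismeMult}, $f(e^\gamma,1)=\phi_{\vec t}(e^{\delta-w(\gamma)})/\phi_{\vec t}(S_\delta)=\vec t^{\,\delta-w(\gamma)}/S_\delta(\vec t)$ for $\gamma\in\Pi_\delta$, where $\phi_{\vec t}\in\Mult(\mathbb R[Q^+])^+$ is the morphism with $\phi_{\vec t}(e^{\alpha_i})=t_i$. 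Put $m:=M_{\Psi(\vec t,w)}=\sum_{\gamma\in\Pi_\delta}K_{\delta,\gamma}f(e^\gamma,1)\gamma$. The computation carried out inside the proof of Proposition \ref{isomosphismeMult} (itself copying the one in the proof of Lemma \ref{Prop_reduc[0,1]}), which uses the $s_\alpha$-invariance of $\Pi_\delta$ and the identity $K_{\delta,w(\gamma)}=K_{\delta,\gamma}$, gives for every $\alpha\in S$
\[
\langle\alpha,w(m)\rangle=\frac{1}{\phi_{\vec t}(S_\delta)}\sum_{\substack{\gamma\in\Pi_\delta\\ \langle\gamma,\alpha\rangle>0}}K_{\delta,\gamma}\,\phi_{\vec t}(e^{\delta-\gamma})\Bigl(1-\phi_{\vec t}(e^{\alpha})^{2\langle\gamma,\alpha\rangle/\langle\alpha,\alpha\rangle}\Bigr)\langle\alpha,\gamma\rangle .
\]
Since $\vec t\in[0,1]^d$ we have $\phi_{\vec t}(e^{\alpha})\le 1$, so each summand is $\ge 0$; hence $\langle\alpha,w(m)\rangle\ge 0$ for all $\alpha\in S$, that is $w(m)\in\Delta$ and $m\in w^{-1}(\Delta)$.

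Next I identify $S_{w(m)}=\{\alpha\in S\mid\langle\alpha,w(m)\rangle=0\}$. If $\alpha=\alpha_i$ with $t_i=1$, every factor $1-\phi_{\vec t}(e^{\alpha_i})^{\cdots}$ in the displayed sum vanishes, so $\langle\alpha_i,w(m)\rangle=0$. Conversely, if $t_i<1$ the sum is strictly positive: since $\mathbf 0^c(\vec t)$ is $\delta$-admissible, the Dynkin-subchain construction of Lemmas \ref{admissible-nonzerodecomp} and \ref{existenceHighestWeight} produces a weight $\gamma\in\Pi_\delta$ with $\langle\gamma,\alpha_i\rangle>0$ and $\delta-\gamma\in\langle\mathbf 0^c(\vec t)\rangle$, whence $\phi_{\vec t}(e^{\delta-\gamma})\ne 0$ and the corresponding summand is $>0$ --- this is exactly the equivalence $\phi_{\vec t}(e^{\alpha})=1\Leftrightarrow\langle\alpha,w(m)\rangle=0$ established inside Proposition \ref{isomosphismeMult}. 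Thus $S_{w(m)}=\{\alpha_i\mid t_i=1\}=\mathbf 1(\vec t)$, and consequently $\mathrm{Stab}_W(w(m))=W_{\mathbf 1(\vec t)}$, because the isotropy group of a point $y$ of $\Delta$ is the standard parabolic subgroup $W_{S_y}$ (the fact already invoked in the proof of Lemma \ref{unicityRightCoset}).

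Finally, every $W$-orbit meets $\Delta$ in exactly one point. Let $w'\in W$. If $m\in(w')^{-1}(\Delta)$, then $w'(m)\in\Delta$; since $w(m)\in\Delta$ lies in the same orbit, $w'(m)=w(m)$, so $w'w^{-1}\in\mathrm{Stab}_W(w(m))=W_{\mathbf 1(\vec t)}$, i.e. $w'\in W_{\mathbf 1(\vec t)}w$. Conversely, if $w'=vw$ with $v\in W_{\mathbf 1(\vec t)}$, then $w'(m)=v(w(m))=w(m)\in\Delta$, so $m\in(w')^{-1}(\Delta)$. This establishes the equivalence. The only step that is not a bookkeeping exercise is the implication ``$t_i<1\Rightarrow\langle\alpha_i,w(m)\rangle>0$'' in the second paragraph: it truly requires exhibiting an honest weight of $V(\delta)$ that pairs positively with $\alpha_i$ and does not vanish under evaluation at $\vec t$, and this is precisely where the $\delta$-admissibility of $\mathbf 0^c(\vec t)$ --- hence the combinatorics of dominant faces and of Dynkin subchains --- is needed; everything else reuses the identity for $\langle\alpha,w(M_f)\rangle$ already proved, together with the classical action of a finite reflection group on its chambers.
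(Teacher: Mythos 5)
Your route is the same as the paper's: you compute $\langle\alpha_i,w(m)\rangle$ exactly as in Lemma \ref{Prop_reduc[0,1]} and Proposition \ref{isomosphismeMult}, deduce $w(m)\in\Delta$ from $t_i\le 1$, identify which walls of $\Delta$ contain $w(m)$, and conclude with the standard facts that $\Delta$ is a fundamental domain and that the stabilizer of a point of $\Delta$ is the parabolic subgroup generated by the reflections through the walls containing it. The paper's own proof is precisely this, except that it merely asserts the equivalence $\langle\alpha_i,w(m)\rangle=0\Leftrightarrow t_i=1$ and leaves the chamber bookkeeping implicit.

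The step you single out as the only nontrivial one, namely ``$t_i<1\Rightarrow\langle\alpha_i,w(m)\rangle>0$'', is indeed the delicate point, and your justification does not close it. Lemmas \ref{admissible-nonzerodecomp} and \ref{existenceHighestWeight} (and the face argument inside Lemma \ref{faceNonzeroRootsNonZero}, which is really the statement you need) concern simple roots belonging to the $\delta$-admissible set $\mathbf{0}^{c}(\vec{t})$; they therefore only cover $0<t_i<1$, and for $t_i=0$ they produce nothing --- and no weight of the required form need exist. Concretely, take type $A_2$, $\delta=\omega_1$, $\vec{t}=(0,0)$ (so $\mathbf{0}^{c}(\vec{t})=\emptyset$ is admissible, $\mathbf{1}(\vec{t})=\emptyset$, and $w=\Id\in W^{\emptyset}=W$). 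Then $m=M_{\Psi(\vec{t},\Id)}=\omega_1$ and $\langle\alpha_2,m\rangle=0$ although $t_2=0\neq 1$: the unique weight of $V(\omega_1)$ pairing positively with $\alpha_2$, namely $\omega_1-\alpha_1$, has coefficient $\vec{t}^{\,\alpha_1}=t_1=0$. So $S_{w(m)}$ can be strictly larger than $\mathbf{1}(\vec{t})$, your identification $\mathrm{Stab}_W(w(m))=W_{\mathbf{1}(\vec{t})}$ breaks down, and in this example even the ``only if'' direction of the statement read literally fails ($s_{\alpha_2}$ fixes $m$, so $m\in s_{\alpha_2}^{-1}(\Delta)$ while $s_{\alpha_2}\notin W_{\mathbf{1}(\vec{t})}\Id$). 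The paper's proof is open to the same objection, since the equivalence is asserted there without argument; but your specific claim that the Dynkin-subchain lemmas supply the converse for every $t_i<1$ is not correct --- they do so only for $t_i\in(0,1)$, and the case $t_i=0$ requires a separate argument (or an additional hypothesis relating $\alpha_i$ to $\delta$ and $\mathbf{0}^{c}(\vec{t})$).
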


\begin{proof}
Let $\alpha_{i}\in S$. We have
\[
M_{\Psi(\vec{t},w)}=\frac{1}{S_{\delta}(\vec{t})}\sum_{\gamma\in\Pi_{\delta}%
}K_{\delta,\gamma}\vec{t}^{\delta-w(\gamma)}\gamma.
\]
Thus,
\begin{align*}
\langle M_{\Psi(\vec{t},w)},w^{-1}(\alpha_{i})\rangle=  &  \frac{1}{S_{\delta
}(\vec{t})}\sum_{\gamma\in\Pi_{\delta}}K_{\delta,\gamma}\vec{t}^{\delta
-w(\gamma)}\langle\gamma,w^{-1}(\alpha_{i})\rangle=\frac{1}{S_{\delta}(\vec
{t})}\sum_{\gamma\in\Pi_{\delta}}K_{\delta,w(\gamma)}\vec{t}^{\delta
-w(\gamma)}\langle w(\gamma),\alpha_{i}\rangle\\
=  &  \frac{1}{S_{\delta}(\vec{t})}\sum_{\gamma\in\Pi_{\delta}}K_{\delta
,\gamma}\vec{t}^{\delta-\gamma}\langle\gamma,\alpha_{i}\rangle=\frac
{1}{S_{\delta}(\vec{t})}\sum_{\substack{\gamma\in\Pi_{\delta}\\\langle
\gamma,\alpha_{i}\rangle>0}}K_{\delta,\gamma}\vec{t}^{\delta-\gamma}(1-t_{i}^{2\frac
{\langle\gamma,\alpha_{i}\rangle}{\langle\alpha_{i},\alpha_{i}\rangle}%
})\langle\gamma,\alpha_{i}\rangle.
\end{align*}
Since each $t_{i}$ is in $\lbrack0,1]$, $\langle M_{\Psi(\vec{t},w)},w^{-1}(\alpha
_{i})\rangle\geq0$ for $1\leq i\leq d$ and hence $w(M_{\Psi(\vec{t},w)})\in \Delta$. Moreover, $\langle M_{\Psi(\vec{t},w)},w^{-1}(\alpha
_{i})\rangle=0$ if and only if $t_{i}=1$. Therefore, $w(M_{\Psi(\vec{t}%
,w)})\in w^{\prime}(\Delta)$ if and only if $w^{\prime}$ is a product of
reflections $s_{\alpha_{i}}$ such that $t_{i}=1$. Applying $w^{-1}$ to the
latter result yields the proof of the Lemma.
\end{proof}

\begin{proposition}
\label{injectivityM} The map $\vec{M}$ is injective.
\end{proposition}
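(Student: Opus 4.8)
The plan is to show that the mean vector $\vec M$ is injective by separating the combinatorial data $(\vec t, w)$ into two pieces: the Weyl chamber in which $M_{\Psi(\vec t,w)}$ lies, which is controlled by $w$ modulo $W_{\mathbf 1(\vec t)}$ via Lemma \ref{positionDrift}, and the ``radial'' data $\vec t$, which I will recover from the values of $\Psi(\vec t,w)$ after reducing to the dominant chamber. Suppose $(\vec t_1,w_1)$ and $(\vec t_2,w_2)$ are two parameters with $M_{\Psi(\vec t_1,w_1)} = M_{\Psi(\vec t_2,w_2)} =: m$. By Lemma \ref{positionDrift} the set of $w'$ with $m \in w'^{-1}(\Delta)$ equals $W_{\mathbf 1(\vec t_i)}w_i$ for $i=1,2$; comparing these two descriptions of the same set of minimal-chamber representatives forces $\mathbf 1(\vec t_1)=\mathbf 1(\vec t_2)$ and then $W_{\mathbf 1(\vec t_1)}w_1 = W_{\mathbf 1(\vec t_2)}w_2$, so by uniqueness of the minimal right-coset representative (Lemma \ref{unicityRightCoset}, applied to the unique dominant point $y$ of $Wm$ together with $S_y$) we get $w_1 = w_2 =: w$.

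\textbf{Reducing to the dominant case.} Once $w_1=w_2=w$, replace $f_i := \Psi(\vec t_i,w)$ by $f_i\circ w^{-1}$; then $M_{f_i\circ w^{-1}} = w(m)\in\Delta$, and $f_i\circ w^{-1} = \Psi(\vec t_i,\Id)$ is the nonnegative multiplicative function on $\hat T_\delta$ with $(f_i\circ w^{-1})(e^\gamma,n) = \dfrac{\vec t_i^{\,n\delta-\gamma}}{S_\delta(\vec t_i)^n}$. By Proposition \ref{isomosphismeMult} (or directly Corollary \ref{firstExpression}), the restriction of $f_i\circ w^{-1}$ to $\hat T_\delta^+$ is exactly $\Psi^+(\vec t_i)$. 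The equality $m = M_{f_1} = M_{f_2}$ does not yet give $f_1=f_2$, so instead I will argue that $m$ determines $\vec t_i$ directly: from the displayed formula for $M_{\Psi(\vec t,w)}$ in the proof of Lemma \ref{positionDrift}, we have
\[
\langle M_{\Psi(\vec t,w)},w^{-1}(\alpha_i)\rangle = \frac{1}{S_\delta(\vec t)}\sum_{\substack{\gamma\in\Pi_\delta\\\langle\gamma,\alpha_i\rangle>0}}K_{\delta,\gamma}\,\vec t^{\,\delta-\gamma}\bigl(1 - t_i^{2\langle\gamma,\alpha_i\rangle/\langle\alpha_i,\alpha_i\rangle}\bigr)\langle\gamma,\alpha_i\rangle,
\]
so each coordinate of $m$ in the appropriate basis is a monotone (strictly decreasing in $t_i$ on the relevant range, once the other $t_j$ are fixed) function of the entries of $\vec t$. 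The cleanest route, however, is to invoke the already-proven injectivity of $\Psi^+$: I will show $\Psi^+(\vec t_1) = \Psi^+(\vec t_2)$, whence $\vec t_1=\vec t_2$ by Proposition \ref{injectivity}.

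\textbf{Recovering the multiplicative function from its mean vector.} To get $\Psi^+(\vec t_1)=\Psi^+(\vec t_2)$ from $M_{f_1}=M_{f_2}$, the key observation is that for $f\in\Mult(\hat T_\delta)^+$ with $M_f\in\Delta$, the mean vector already determines all the values $f(e^\gamma,1)$, $\gamma\in\Pi_\delta$: indeed by Proposition \ref{algebraicDescription} we have $f(e^\gamma,1) = \tfrac{1}{\phi(S_\delta)}\phi(e^{\delta-\gamma})$ for a multiplicative $\phi$ on $\mathbb R[Q^+]$, and $\phi$ is determined by $\phi(e^{\alpha_i}) = t_i$; but the inner products $\langle M_f,\alpha_i\rangle$ are, by the displayed formula, invertible functions of the tuple $(t_1,\dots,t_d)$ given the $\delta$-admissibility constraint — one recovers $t_i$ inductively along the depth filtration of $\mathbf 0^c(\vec t)$ exactly as in the proof of Proposition \ref{injectivity}, using that $\langle M_f,\alpha_i\rangle = 0 \iff t_i=1$ and that the remaining relations are strictly monotone. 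Then $f_1(e^\gamma,1)=f_2(e^\gamma,1)$ for all $\gamma$, so $f_1=f_2$ on all of $\hat T_\delta$ (it is generated by $\{(e^\gamma,1)\}$), and in particular $\Psi^+(\vec t_1)=\Psi^+(\vec t_2)$.

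\textbf{The main obstacle.} The delicate point is the inductive recovery of $\vec t$ from $M_f$ in the presence of vanishing coordinates and of roots orthogonal to $\delta$: when some $t_i=0$ the corresponding summands collapse and one must check that the depth-ordered system of equations $\langle M_f, w^{-1}(\alpha_i)\rangle = (\text{monotone in } t_i)$ still pins down each $t_i$ uniquely on $[0,1]$ — this is where $\delta$-admissibility of $\mathbf 0^c(\vec t)$ and Lemmas \ref{existenceHighestWeight}, \ref{lemma_encadre} do the real work, mirroring the argument already carried out for $\Psi^+$. I expect that after this reduction the proof of injectivity of $\vec M$ becomes essentially a repackaging of Propositions \ref{isomosphismeMult} and \ref{injectivity} together with Lemma \ref{positionDrift}, rather than a new computation.
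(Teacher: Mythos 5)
Your first step (using Lemma \ref{positionDrift} to force $W_{\mathbf 1(\vec t_1)}w_1=W_{\mathbf 1(\vec t_2)}w_2$, hence $\mathbf 1(\vec t_1)=\mathbf 1(\vec t_2)$ and $w_1=w_2$ by minimality of the coset representatives) is exactly the paper's opening move and is fine. The gap is in the step you call ``recovering the multiplicative function from its mean vector''. You want to deduce $\Psi^{+}(\vec t_1)=\Psi^{+}(\vec t_2)$ from $M_{f_1}=M_{f_2}$ and then quote Proposition \ref{injectivity}, but the only bridge you offer is the assertion that the quantities $\langle M_f,\alpha_i\rangle$ are ``invertible functions of the tuple $(t_1,\dots,t_d)$'', recovered inductively along the depth filtration because each is monotone in $t_i$ with the other coordinates fixed. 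That assertion is precisely the content of the proposition (injectivity of $\vec t\mapsto M(\vec t,\Id)$), and coordinatewise monotonicity does not give it: each $\langle M,\alpha_i\rangle$ depends on all the $t_j$ through the weights $\vec t^{\,\delta-\gamma}$ and through the normalization $S_\delta(\vec t)$, so a simultaneous change of several coordinates could a priori leave every inner product unchanged. Nothing in your sketch rules this out, and Proposition \ref{injectivity} cannot be invoked until equality of the multiplicative functions — not merely of their means — has been established, so the argument as written is circular at its key point.

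The paper's proof supplies exactly the missing ingredient, and it is of a different nature. After $w_1=w_2$, it first shows $\mathbf 0^{c}(\vec t_1)=\mathbf 0^{c}(\vec t_2)$ by locating $M$ in the relative interior of a single dominant face $F$ (the $\delta$-admissibility/Vinberg analysis of Section \ref{Vinberg}); this boundary bookkeeping is only gestured at in your ``main obstacle'' paragraph. Then, to recover each $t_i$ for $i\in\mathbf 0^{c}(\vec t)$, it runs a probabilistic argument: the two random walks $X$ and $X'$ attached to $\Psi(\vec t_1,w)$ and $\Psi(\vec t_2,w)$ have the same mean $M$, so the local limit theorem for large deviations gives $\mathbb P(X_l=\gamma_l)\sim\mathbb P(X_l=\gamma_l-\alpha_i)$ for well-chosen lattice points $\gamma_l$ near $lM$ inside $lF$; combined with the centrality formula $\mathbb P(X_l=\gamma_l)=\#\Gamma_{\mathbb R^{d}}(\gamma_l,l)\,\vec t^{\,l\delta-\gamma_l}/S_\delta(\vec t)^{l}$, the parameter-independent path counts $\#\Gamma_{\mathbb R^{d}}(\gamma_l,l)/\#\Gamma_{\mathbb R^{d}}(\gamma_l-\alpha_i,l)$ are asymptotic to both $t_i^{-1}$ and $t_i'^{-1}$, forcing $t_i=t_i'$. (An analytic alternative in the spirit of your monotonicity idea would be strict convexity of $\log S_\delta$ — the gradient of a strictly convex function is injective — but that too has to be proved, and it only handles the open stratum; the degenerate strata with some $t_i=0$ still require the face analysis.) So the proposal is not a complete proof: it reduces the problem to a statement it does not establish.
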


\begin{proof}
Let $(\vec{t},w)$ and $(\vec{t}^{\prime},w^{\prime})$ be two elements of
$\{(\vec{t},w)\in[0,1]_{\delta}^{d}\times W|w\in W^{\mathbf{1}(\vec{t})}\}$
such that $\vec{M}(\vec{t},w)=\vec{M}(\vec{t}^{\prime},w^{\prime})$. We simply
denote by $M$ this common value. Lemma \ref{positionDrift} implies
that $W_{\mathbf{1}(\vec{t})}w=W_{\mathbf{1}(\vec{t}^{\prime})}w^{\prime}$.
Thus, $W_{\mathbf{1}(\vec{t})}=W_{\mathbf{1}(\vec{t}^{\prime})}$, which
implies that $\mathbf{1}(\vec{t})=\mathbf{1}(\vec{t}^{\prime})$; since $w$ and
$w^{\prime}$ are both a minimal right coset representative of $W_{\mathbf{1}%
(\vec{t}^{\prime})}w$, we have $w=w^{\prime}$. Let $F$ be the dominant
face corresponding to the $\delta$-admissible set $\mathbf{0}_{\vec{t}}^{c}$
and let $F^{\prime}$ be the one corresponding to the $\delta$-admissible set
$\mathbf{0}_{\vec{t}^{\prime}}^{c}$. By the results of Section \ref{Vinberg}, $\vec
{M}(\vec{t},w)\in w^{-1}(\overset{\circ}{F})$ (where $\overset{\circ}{F}$ the
interior of the face $F$) and $\vec{M}(\vec{t}^{\prime},w)\in w^{\prime
-1}(\overset{\circ}{F^{\prime}})$. Since $w=w^{\prime}$, we must have
$F=F^{\prime}$ and thus $\mathbf{0}_{\vec{t}}^{c}=\mathbf{0}_{\vec{t}^{\prime
}}^{c}$. Let $(X_{l})_{l\geq0}$, $(X_{l}^{\prime})_{l\geq0}$ be two
random walks with initial position $X_{0}=X_{0}^{\prime}=0$ and respective
transition matrices
\[
\mathbb{P}(X_{l+1}=\gamma|X_{l}=\gamma^{\prime})=K_{\delta,\gamma
-\gamma^{\prime}}\frac{\vec{t}^{\delta-w(\gamma-\gamma^{\prime})}}{S_{\delta
}(\vec{t})},\mathbb{P}(X_{l+1}^{\prime}=\gamma|X_{l}^{\prime}=\gamma^{\prime
})=K_{\delta,\gamma-\gamma^{\prime}}\frac{\vec{t}^{\prime\delta-w(\gamma
-\gamma^{\prime})}}{S_{\delta}(\vec{t}^{\prime})}.
\]
Both random walks have mean $M$, thus it follows by the local limit theorem
for large deviations (see for instance Theorem 4.2.1 in \cite{LLP1}) that for
any sequences of weights $(\gamma_{l})_{l\geq1},(\gamma_{l}^{\prime})_{l\geq
1}$ such that $\gamma_{l}-lM=o(l^{2/3}),\gamma_{l}^{\prime}-lM=o(l^{2/3})$,
and $\mathbb{P}(X_{l}=\gamma_{l})\not =0,\mathbb{P}(X_{l}=\gamma_{l}^{\prime
})\not =0$, we have
\begin{equation}
\mathbb{P}(X_{l}=\gamma_{l})\sim\mathbb{P}(X_{l}=\gamma_{l}^{\prime}),
\label{LLT}%
\end{equation}
and the same relation holds for $(X_{l}^{\prime})_{l\geq1}$. Let
$i\in\mathbf{0}_{\vec{t}}^{c}$. For $l\geq1$, let $(\gamma_{l},l)\in\hat{T}_{\delta}^{+}$ be such that $\gamma_{l}$ is an element
of $P_{\delta}\cap lF$ at minimal distance from $lM$ and set $\gamma
_{l}^{\prime}=\gamma_{l}-\alpha_{i}$. Then, $\mathbb{P}(X_{l}=\gamma
_{l})\not =0$. Since $M$ belongs to the interior of $M$, $\gamma_{l}^{\prime
}\in P_{\delta}\cap lF$ for $l$ large enough: thus, $\mathbb{P}(X_{l}%
=\gamma_{l}^{\prime})\not =0$ for $l$ large enough. The sequences $(\gamma
_{l}-lM)_{l\geq1}$ and $(\gamma_{l}^{\prime}-lM)_{l\geq1}$ are bounded, thus
the local limit Theorem applies and
\begin{equation}
\mathbb{P}(X_{l}=\gamma_{l})\sim\mathbb{P}(X_{l}=\gamma_{l}^{\prime})
\end{equation}
as $l$ goes to infinity. Since $X$ comes from a central measure,
\begin{equation}
\mathbb{P}(X_{l}=\gamma_{l})=\#\Gamma_{\mathbb{R}^{d}}(\gamma_{l},l)\frac{\vec{t}^{n\delta
-\gamma_{l}}}{\big(S_{\delta}(\vec{t})\big)^{l}}. \label{Gibbsnumberpaths}%
\end{equation}

Using \eqref{LLT} with \eqref{Gibbsnumberpaths} yields that $\frac
{\#\Gamma_{\mathbb{R}^{d}}(\gamma_{l},l)}{\#\Gamma_{\mathbb{R}^{d}}(\gamma'_{l},l)}\sim\vec{t}^{\gamma^{\prime
}_{l}-\gamma_{l}}=t_{i}^{-1}$. But the same holds for $X^{\prime}$, yielding
that $\frac{\#\Gamma_{\mathbb{R}^{d}}(\gamma_{l},l)}{\#\Gamma_{\mathbb{R}^{d}}(\gamma'_{l},l)}\sim\vec
{t}^{\prime\gamma^{\prime}_{l}-\gamma_{l}}=t^{\prime-1}_{i}$. Finally,
$t_{i}=t^{\prime}_{i}$.
\end{proof}

\bigskip

We can now prove the main result of this subsection:

\begin{proposition}
\label{bijectionPolytop} The map $\vec{M}$ is a bijective map from
$\lbrace(\vec{t},w)\in[0,1]_{\delta}^{d}\times W\vert w\in W^{\mathbf{1}(\vec{t}%
)}\rbrace$ to $K(\delta)$ such that $\vec{M}([0,1]_{\delta}^{d}\times
\Id)=K(\delta)^{+}$.
\end{proposition}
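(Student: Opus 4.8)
The plan is to reduce the whole statement to the single identity $\vec{M}\bigl([0,1]_{\delta}^{d}\times\{\Id\}\bigr)=K(\delta)^{+}$ (which is precisely the ``moreover'' clause) and then to prove this identity one dominant face at a time. First I would record the equivariance obtained by substituting $\gamma\mapsto w(\gamma)$ in the defining sum for $M_{\Psi(\vec{t},w)}$ and using $K_{\delta,w(\gamma)}=K_{\delta,\gamma}$:
\[
\vec{M}(\vec{t},w)=w^{-1}\bigl(M_{\Psi(\vec{t},\Id)}\bigr).
\]
Since $K(\delta)$ is $W$-stable and $\bigcup_{w\in W}w(\Delta)=\mathbb{R}^{d}$, one has $K(\delta)=\bigcup_{w\in W}w^{-1}(K(\delta)^{+})$, so bijectivity of $\vec{M}$ will follow from that identity: injectivity is Proposition \ref{injectivityM}, and for surjectivity onto $K(\delta)$ one takes $m\in K(\delta)$, picks $v\in W$ with $vm\in\Delta$ (hence $vm\in K(\delta)^{+}$), writes $vm=M_{\Psi(\vec{t},\Id)}$ with $\vec{t}\in[0,1]_{\delta}^{d}$, notes that $\mathbf{1}(\vec{t})\subseteq\{\alpha\in S\mid\langle vm,\alpha\rangle=0\}$ (take $w'=s_{\alpha}$ in Lemma \ref{positionDrift}), replaces $v$ by the minimal-length representative $w\in W^{\mathbf{1}(\vec{t})}$ of $W_{\mathbf{1}(\vec{t})}v$, and checks $\vec{M}(\vec{t},w)=w^{-1}(vm)=v^{-1}u^{-1}(vm)=v^{-1}(vm)=m$, where $w=uv$ with $u$ fixing $vm$.

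Next I would settle the easy inclusion $\vec{M}([0,1]_{\delta}^{d}\times\{\Id\})\subseteq K(\delta)^{+}$: each $M_{\Psi(\vec{t},\Id)}$ is a convex combination of weights of $V(\delta)$, hence lies in $K(\delta)$, and it lies in $\Delta$ by Lemma \ref{positionDrift}. For the converse I would stratify $K(\delta)^{+}=\bigsqcup_{S'}\bigl(\overset{\circ}{F_{S'}}\cap\Delta\bigr)$ over the $\delta$-admissible subsets $S'\subseteq S$ --- each $m\in K(\delta)^{+}$ lies in the relative interior of a unique face of $K(\delta)$, which, containing a dominant point, is a dominant face $F_{S'}$ by Theorem \ref{Description faces} --- and dually $[0,1]_{\delta}^{d}=\bigsqcup_{S'}D_{S'}$ with $D_{S'}=\{\vec{t}\mid\mathbf{0}^{c}(\vec{t})=S'\}$; it then suffices to prove $\vec{M}(D_{S'}\times\{\Id\})=\overset{\circ}{F_{S'}}\cap\Delta$ for each $\delta$-admissible $S'$. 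Here one inclusion is again routine: for $\vec{t}\in D_{S'}$ the monomial $\vec{t}^{\delta-\gamma}$ (note $\delta-\gamma\in Q^{+}$) vanishes unless $\gamma\in\Pi_{F_{S'}}=(\delta+\langle S'\rangle)\cap\Pi_{\delta}$ and is strictly positive for every $\gamma\in\Pi_{F_{S'}}$, so $M_{\Psi(\vec{t},\Id)}$ is a strictly positive convex combination of the weights in $\Pi_{F_{S'}}$ and therefore lies in the relative interior of $\Conv(\Pi_{F_{S'}})=F_{S'}$, as well as in $\Delta$.

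The reverse inclusion is the heart of the proof and the step I expect to be the main obstacle. My plan is to recognise $\vec{t}\mapsto M_{\Psi(\vec{t},\Id)}$ on $D_{S'}$, after the substitution $t_{i}=e^{u_{i}}$ ($i\in S'$), as the gradient map of $g(\vec{u})=\log S_{\delta}(\vec{t})=\log\sum_{\gamma\in\Pi_{F_{S'}}}K_{\delta,\gamma}\prod_{i\in S'}e^{c_{i}(\gamma)u_{i}}$, where $\delta-\gamma=\sum_{i\in S'}c_{i}(\gamma)\alpha_{i}$ with $c_{i}(\gamma)\ge 0$; a short computation gives $M_{\Psi(\vec{t},\Id)}=\delta-\sum_{i\in S'}(\partial_{u_{i}}g)(\vec{u})\,\alpha_{i}$. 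The function $g$ is smooth and convex, and strictly convex because the vectors $\delta-\gamma$ ($\gamma\in\Pi_{F_{S'}}$) affinely span $\langle S'\rangle$, which is equivalent to $\dim F_{S'}=\#S'$ (Theorem \ref{Description faces}). By the standard fact that the gradient of the logarithm of a positive-coefficient exponential sum $\sum_{j}w_{j}e^{\langle\vec{u},v_{j}\rangle}$ with affinely spanning exponents is a diffeomorphism from $\mathbb{R}^{S'}$ onto the relative interior of $\Conv\{v_{j}\}$, the map $\vec{u}\mapsto M_{\Psi(\vec{t},\Id)}$ is a bijection from $\mathbb{R}^{S'}$ onto $\overset{\circ}{F_{S'}}$.

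It then only remains to check that the preimage $\vec{u}$ of a given $m\in\overset{\circ}{F_{S'}}\cap\Delta$ has all $u_{i}\le 0$, i.e. $t_{i}\le 1$ for $i\in S'$, which is exactly what places the corresponding $\vec{t}$ in $D_{S'}\subseteq[0,1]_{\delta}^{d}$. If $t_{i}>1$ for some $i\in S'$, I would invoke the computation in the proof of Lemma \ref{positionDrift} with $w=\Id$, which after dropping the terms killed by a zero coordinate $t_{j}$ ($j\notin S'$) reads
\[
\langle m,\alpha_{i}\rangle=\frac{1}{S_{\delta}(\vec{t})}\sum_{\substack{\gamma\in\Pi_{F_{S'}}\\\langle\gamma,\alpha_{i}\rangle>0}}K_{\delta,\gamma}\,\vec{t}^{\delta-\gamma}\bigl(1-t_{i}^{\,2\langle\gamma,\alpha_{i}\rangle/\langle\alpha_{i},\alpha_{i}\rangle}\bigr)\langle\gamma,\alpha_{i}\rangle ,
\]
together with the observation that this sum is non-empty: the $W_{S'}$-orbit of $\alpha_{i}$ spans the span of its indecomposable component $C$ of $S'$ and is stable under $-\mathrm{id}$, and $\delta$ is not orthogonal to all of $\langle C\rangle$ (admissibility of $S'$), so there is $w'\in W_{S'}$ with $\langle\delta,w'^{-1}(\alpha_{i})\rangle>0$, whence $\gamma_{0}:=w'\delta\in\Pi_{F_{S'}}$ contributes a summand with $\langle\gamma_{0},\alpha_{i}\rangle>0$. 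Every summand is then strictly negative when $t_{i}>1$, contradicting $m\in\Delta$; hence $\vec{t}\in D_{S'}$ and $m=\vec{M}(\vec{t},\Id)$. This finishes the face-by-face argument, hence the identity $\vec{M}([0,1]_{\delta}^{d}\times\{\Id\})=K(\delta)^{+}$, and with the first paragraph the proposition. The only genuine friction I anticipate is the surjectivity of the moment map onto $\overset{\circ}{F_{S'}}$ in the third step; everything else is either already available (Proposition \ref{injectivityM}, Lemma \ref{positionDrift}) or elementary Weyl-group and convexity bookkeeping.
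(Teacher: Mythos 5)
Your argument is correct, but for the key surjectivity step it takes a genuinely different route from the paper. The paper also reduces to $w=\Id$ via the equivariance $\vec{M}(\vec{t},w)=w^{-1}\vec{M}(\vec{t},\Id)$ and quotes Proposition \ref{injectivityM} for injectivity, but it proves $\vec{M}([0,1]^{d}\times \Id)=K(\delta)^{+}$ globally: it introduces $\Sigma(\vec{u})=\log S_{\delta}(e^{u_{1}},\dots,e^{u_{d}})$, notes $\nabla\Sigma=\delta-\vec{M}$, argues that the image of this gradient map over the orthant is convex, shows that $0$, $\delta$ and the points $x_{i}=F_{\{\alpha_{i}\}}\cap\partial\Delta$ lie in the closure of the image by explicit limits, and concludes by convexity together with compactness of $[0,1]^{d}$; this hinges on the asserted vertex description of $K(\delta)^{+}$ as $\{0,\delta,x_{i}\}$. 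You instead stratify $K(\delta)^{+}$ by the relative interiors of the dominant faces $F_{S'}$ (Theorem \ref{Description faces}) and, on each stratum, recognize $\vec{t}\mapsto M_{\Psi(\vec{t},\Id)}$ as the gradient of the log-partition function of the exponential family supported on $\Pi_{F_{S'}}$, whose image is $\overset{\circ}{F_{S'}}$; the constraint $t_{i}\le 1$ is then recovered by the same pairing computation as in Lemmas \ref{Prop_reduc[0,1]} and \ref{positionDrift}, and your non-vacuity argument via the $W_{S'}$-orbit of $\alpha_{i}$ (using $\delta$-admissibility) is sound. What your route buys: it never uses the extreme-point description of $K(\delta)^{+}$ nor the convexity of a gradient image over a sub-orthant (the two most delicate points of the paper's proof), and it yields the finer statement $\vec{M}(D_{S'}\times\{\Id\})=\overset{\circ}{F_{S'}}\cap\Delta$ matching each admissible $S'$ with its face, which meshes nicely with Proposition \ref{descriptionNonZeroSet}. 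What it costs is the appeal to the Legendre-duality fact that the gradient map surjects onto the relative interior of the convex hull of the exponents; this is indeed standard, but for a self-contained write-up you should include the short argument (for $x$ in that relative interior, $u\mapsto g(u)-\langle u,x\rangle$ is coercive on $\mathbb{R}^{S'}$, hence attains a minimum, at which $\nabla g=x$; injectivity follows from strict convexity, which you correctly tie to $\dim F_{S'}=\#S'$). Finally, your reduction of surjectivity onto $K(\delta)$ to the $w=\Id$ case, via $\mathbf{1}(\vec{t})\subset S_{vm}$ and passage to the minimal coset representative, is a slightly more careful version of the paper's argument, which instead extends the parameter set to $[0,1]^{d}\times W$ and uses that this extension does not change the image.
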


\begin{proof}
The injectivity of $\vec{M}$ has already been proven in Proposition
\ref{injectivityM}. Let us prove that $\vec{M}$ is surjective. Recall that
$\Psi$ is a restriction of the map $\Phi\circ\theta^{-1}:[0,1]^{d}\times
W\longrightarrow \Mult(\hat{T}_{\delta})^{+}$ defined at the end of Section
\ref{CharacMult}, and that both maps have the same image; thus, it is enough to prove
that the map $\vec{M}$ extended to the domain $[0,1]^{d}\times W$ by the
formula $\vec{M}(\vec{t},w)=M_{\Phi\circ(\theta^{-1}(\vec{t}),w)}$ is
surjective. Let us first prove that $\vec{M}_{|[0,1]^{d}\times \Id}$ is
surjective onto $K(\delta)^{+}$. Let $1\leq i\leq d$ be such that
$\langle\delta,\alpha_{i}\rangle\not =0$: then, $\alpha_{i}$ is a $\delta
$-admissible set, and the dominant face associated with $\alpha_{i}$ is
one-dimensional. Let $x_{i}=F_{i}\cap\partial\Delta$ (that is, $x_{i}$ is the
projection of $\delta$ on $\alpha_{i}^{\perp}$). Then, $K(\delta)^{+}$ is a
convex polytope whose extreme points are the elements $\delta,0$ and
$\{x_{i}\}_{\substack{1\leq i\leq d\\\langle\alpha_{i},\delta\rangle>0}%
}$. Let $\Sigma:(\mathbb{R}^{+})^{d}\longrightarrow\mathbb{R}$ be the
function defined by
\[
\Sigma(\vec{u})=\log(S_{\delta}(e^{u_{1}},\dots,e^{u_{d}}))=\log(\sum
_{\gamma\in\Pi_{\delta}}K_{\delta,\gamma}e^{\vec{u}.(\delta-\gamma)}).
\]
Then,
\[
\nabla\Sigma(\vec{u})=\left(  \frac{1}{S_{\delta}(e^{u_{1}},\dots,e^{u_{d}}%
)}\sum_{\gamma\in\Pi_{\delta}}K_{\delta,\gamma}(\delta_{i}-\gamma_{i}%
)e^{\vec{u}.(\delta-\gamma)}\right)  _{1\leq i\leq d}=\delta-\vec{M}%
((e^{u_{1}},\dots,e^{u_{d}}),\Id).
\]
Moreover, we can show that $\Sigma$ is a convex function: introduce the random
variable $X$ such that $\mathbb{P}(X=\delta-\gamma)=\frac{K_{\delta,\gamma}%
}{S_{\delta}(e^{u_{1}},\dots,e^{u_{d}})}e^{\vec{u}.(\delta-\gamma)}$. The
Hessian matrix of $\Sigma$ at $\vec{u}$ is exactly the covariance matrix of
the random variable $X$, which is nonnegative: since this is true for all
vector $\vec{u}\in(\mathbb{R}^{+})^{d}$, $\Sigma$ is indeed convex. 
Since $\Sigma$ is a convex function and $(\mathbb{R}^{+})^{d}$ is convex, the
set $\nabla\Sigma(\mathbb{R}^{+})^{d})$ is a convex set. We have thus proven
that the set $\{\delta-\vec{M}(e^{u_{1}},\dots,e^{u_{d}})|\vec{u}%
\in(\mathbb{R}^{+})^{d}\}=\vec{M}(]0,1]^{d},\Id)$ is convex. Note first that
$M(\mathbf{1},\Id)=0$, yielding that $0\in\vec{M}(]0,1]^{d}\times \Id)$. Let
$1\leq i\leq d$ be such that $\langle\alpha_{i},\delta\rangle\not =0$. Since
the map $S_{\delta}$ is continuous and nonzero on $[0,1]^{d}$, the map
$\vec{M}$ is continuous on $[0,1]^{d}\times \Id$: thus, if $(\vec{t}%
_{l})_{l\geq1}$ is a sequence of $]0,1]^{d}$ such that $\vec{t}_{l}%
\rightarrow(\delta_{ij})_{1\leq j\leq d}$ as $l$ goes to infinity, then
$\vec{M}(\vec{t}_{l},\Id)$ converges to $\vec{M}((\delta_{ij})_{1\leq j\leq
d},\Id)$. Let us set $(\alpha_{i},\delta):=\frac{2\langle\alpha
_{i},\delta\rangle}{\langle\alpha_{i},\alpha_{i}\rangle}$. For $1\leq l\leq(\delta,\alpha_{i})$, $K_{\delta,\delta
-l\alpha_{i}}=1$, since the only element of $B(\delta)$ ending at $\delta
-l\alpha_{i}$ is $f_{\alpha_{i}}^{l}(\pi_{0})$; thus, we have
\begin{align*}
\vec{M}((\delta_{ij})_{1\leq j\leq d},\Id)=  &  \frac{1}{S_{\delta}%
((\delta_{ij})_{1\leq j\leq d})}\sum_{l=0}^{(\alpha_{i},\delta)}%
K_{\delta,\delta-l\alpha_{i}}(\delta-l\alpha_{i})\\
=  &  \frac{1}{\sum_{l=0}^{(\alpha_{i},\delta)}K_{\delta,\delta-l\alpha_{i}}%
}\left(  \sum_{l=0}^{(\alpha_{i},\delta)}\delta-\sum_{l=0}^{(\alpha
_{i},\delta)}l\alpha_{i}\right) \\
=  &  \frac{1}{(\alpha_{i},\delta)+1}\left(  \big((\alpha_{i},\delta
)+1\big)\delta-\frac{(\alpha_{i},\delta).\big((\alpha_{i},\delta)+1\big)}{2}\alpha
_{i}\right) \\
=  &  \delta-\frac{(\alpha_{i},\delta)}{2}\alpha_{i}=x_{i},
\end{align*}
and $x_{i}$ belongs to $\overline{\vec{M}(]0,1]^{d}\times \Id)}$, the closure of
$\vec{M}(]0,1]^{d}\times \Id)$. Similarly, if $(\vec{t}_{l})$ is a sequence of
$[0,1]^{d}$ converging to $\mathbf{0}$, then $\vec{M}(\vec{t}_{l},\Id)$
converges to $\vec{M}(\vec{0},\Id)$. Since $\vec{M}(\vec{0},\Id)=\delta$,
$\delta\in\overline{\vec{M}(]0,1]^{d}\times \Id)}$. Hence, $0,\delta$ and
$\{x_{i}\}_{\substack{1\leq i\leq d\\\langle\alpha_{i},\delta\rangle>0}}$ are
in the closure of $\vec{M}(]0,1]^{d}\times \Id)$. Since $\vec{M}(]0,1]^{d}%
\times \Id)$ is convex, this yields that $K(\delta)^{+}\subset\overline{\vec
{M}(]0,1]^{d}\times \Id)}$. Since $[0,1]^{d}$ is compact, $\vec{M}%
([0,1]^{d},\Id)$ is compact and thus $\overline{\vec{M}(]0,1]^{d}\times
\Id)}\subset\vec{M}([0,1]^{d},\Id)$: this yields that $K(\delta)^{+}\subset\vec
{M}([0,1]^{d},\Id)$. By Lemma \ref{positionDrift}, $\vec{M}([0,1]^{d}%
,\Id)\subset K(\delta)^{+}$, so that finally $\vec{M}([0,1]^{d},\Id)=K(\delta
)^{+}$. Since $\vec{M}(\vec{t},w)=w^{-1}\vec{M}(\vec{t},\Id)$, $\vec
{M}([0,1]^{d}\times W)=\bigcup_{w\in W}w(K(\delta)^{+})=K(\delta)$.
\end{proof}

\subsection{Proof of Theorem \ref{mainresult}}

We give the proof of Theorem \ref{mainresult} by gathering the different
results of the paper. Let us prove the result only for $\partial
\mathcal{H}_{\infty}(\mathbb{R}^{d})$, since the proof for $\partial
\mathcal{H}_{\infty}(\Delta)$ is similar.

\begin{itemize}
\item By Corollary \ref{multiplicativeGraphDescription}, $\partial
\mathcal{H}_{\infty}(\mathbb{R}^{d})$ is homeomorphic to $\Mult(\hat{T}_{\delta})^{+}$ through
the map $j:\Mult(\hat{T}_{\delta})^{+}\longrightarrow\partial\mathcal{H}%
_{\infty}(\mathbb{R}^{d})$ defined by $j(f)(\Gamma(\tau))=f(\gamma,n)$ for
any path $\tau\in\Gamma_{\infty}(\mathbb{R}^{d})$ of length $n$ ending at $\gamma$.
Since $\Mult(\hat{T}_{\delta})^{+}$ is compact, $\partial\mathcal{H}_{\infty
}(\mathbb{R}^{d})$ is a compact space.

\item By Proposition \ref{isomosphismeMult} the map $\Psi:\{(\vec{t}%
,w)\in[0,1]_{\delta}^{d}\times W|w\in W^{\mathbf{1}(\vec{t})}%
\}\longrightarrow \Mult(\hat{T}_{\delta})^{+}$ given by $\Psi(\vec{t}%
,w)(\gamma,n)=\frac{1}{S_{\delta}(\vec{t})^{n}}\vec{t}^{n\delta-w(\gamma)}$ is
a bijection.

\item Finally, by Proposition \ref{bijectionPolytop}, the map $\vec{M}%
:\{(\vec{t},w)\in[0,1]_{\delta}^{d}\times W|w\in W^{\mathbf{1}(\vec{t}%
)}\}\longrightarrow K(\delta)$ given by $\vec{M}(\vec{t},w)=\frac{1}%
{S_{\delta}(\vec{t})}\sum_{\gamma\in\Pi_{\delta}}K_{\delta,\gamma}\vec
{t}^{\delta-w(\gamma)}$ is bijective.
\end{itemize}

Therefore, the map $\mathbb{P}:K(\delta)\longrightarrow\partial\mathcal{H}%
_{\infty}(\mathbb{R}^{d})$ given by $\mathbb{P}=j\circ\Psi\circ(\vec{M}%
^{-1})$ is a bijection. Note that from the previous results, for $m\in
K(\delta)$,
\[
\mathbb{P}_{m}(\Gamma(\tau))=\frac{1}{S_{\delta}(\vec{t}_{m})^{n}}\vec{t}%
_{m}^{n\delta-w(\gamma)},
\]
for all paths $\tau$ of length $n$ ending at $\gamma$. It remains to
show that $\mathbb{P}$ is indeed an homeomorphism. Since $K(\delta)$ and
$\partial\mathcal{H}_{\infty}(\mathbb{R}^{d})$ are compact, it suffices to
prove that $\mathbb{P}$ or $\mathbb{P}^{-1}$ is continuous. But for
$P\in\partial\mathcal{H}_{\infty}(\mathbb{R}^{d})$,
\[
\mathbb{P}^{-1}(P)=\sum_{\tau\in B(\delta)}P\big(\Gamma_{\mathbb{R}^{d}}(\tau)\big)\tau(1).
\]
Thus $\mathbb{P}^{-1}$ is continuous, which concludes the proof of Theorem
\ref{mainresult}. The same proof holds for $\partial\mathcal{H}_{\infty
}(\Delta)$ with $K(\delta)^{+}$ and the map $\mathbb{P}^{+}$ introduced in
the statement of the Theorem. For a metric space $X$, denote by
$M_{1}(X)$ the set of probability measures on $X$ with respect to its Borel
$\sigma$-algebra; we consider $M_{1}(X)$ as a topological space with the weak
convergence topology. As a straightforward corollary of Theorem
\ref{mainresult}, we get the following integral representation of
$\mathcal{H}_{\infty}(\mathbb{R}^{d})$ and $\mathcal{H}_{\infty}(\Delta)$.

\begin{corollary}\label{measureDescription}
The topological spaces $\mathcal{H}_{\infty}(\mathbb{R}^{d})$ and
$\mathcal{H}_{\infty}(\Delta)$ are homeomorphic to $M_{1}(K(\delta))$ and
$M_{1}(K(\delta)^{+})$, respectively through the maps
\[
\mathcal{P}:\left\{
\begin{matrix}
M_{1}(K(\delta)) & \longrightarrow & \mathcal{H}_{\infty}(\mathbb{R}^{d})\\
\mu & \mapsto & \int_{K(\delta)}\mathbb{P}_{m}d\mu(m)
\end{matrix}
\right.
\]
and
\[
\mathcal{P}:\left\{
\begin{matrix}
M_{1}(K(\delta)^{+}) & \longrightarrow & \mathcal{H}_{\infty}(\Delta)\\
\mu & \mapsto & \int_{K(\delta)^{+}}\mathbb{P}_{m}^{+}d\mu(m)
\end{matrix}
\right.  .
\]

\end{corollary}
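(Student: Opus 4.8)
The plan is to leverage Theorem \ref{mainresult}, which gives homeomorphisms $\mathbb{P}\colon K(\delta)\to\partial\mathcal{H}_\infty(\mathbb{R}^d)$ and $\mathbb{P}^+\colon K(\delta)^+\to\partial\mathcal{H}_\infty(\Delta)$, together with the fact (recalled in Section \ref{Sec_generalities}) that $\mathcal{H}_\infty(\mathbb{R}^d)$ and $\mathcal{H}_\infty(\Delta)$ are Choquet simplices with minimal boundaries $\partial\mathcal{H}_\infty(\mathbb{R}^d)$ and $\partial\mathcal{H}_\infty(\Delta)$. I will treat $\mathcal{H}_\infty(\mathbb{R}^d)$ in detail; the case of $\Delta$ is identical with $K(\delta)^+$ in place of $K(\delta)$. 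First I would observe that $\mathcal{P}$ is well-defined: for $\mu\in M_1(K(\delta))$ the map $m\mapsto\mathbb{P}_m(\Gamma_{\mathbb{R}^d}(\tau))=\vec{t}_m^{\,N\delta-w_m(\lambda)}/S_\delta(\vec{t}_m)$ is, for each cylinder $\Gamma_{\mathbb{R}^d}(\tau)$ with $\tau\in\Gamma_{\mathbb{R}^d}(\lambda,N)$, bounded and measurable (indeed continuous on $K(\delta)$, since $\mathbb{P}$ is a homeomorphism and evaluation on a cylinder is continuous on $\partial\mathcal{H}_\infty(\mathbb{R}^d)$ by definition of the initial topology), so $\int_{K(\delta)}\mathbb{P}_m(\Gamma_{\mathbb{R}^d}(\tau))\,d\mu(m)$ makes sense and defines a finitely additive, hence by the cylinder structure countably additive, premeasure that extends to a probability measure on $(\Gamma(\mathbb{R}^d),\mathcal{A})$; centrality is inherited because each $\mathbb{P}_m$ is central.

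Next I would establish that $\mathcal{P}$ is a bijection. Surjectivity is exactly the Choquet integral representation: any $P_0\in\mathcal{H}_\infty(\mathbb{R}^d)$ can be written $P_0=\int_{\partial\mathcal{H}_\infty(\mathbb{R}^d)}P\,d\nu(P)$ for a unique probability measure $\nu$ on the minimal boundary; pushing $\nu$ forward through the homeomorphism $\mathbb{P}^{-1}$ gives $\mu\in M_1(K(\delta))$ with $\mathcal{P}(\mu)=P_0$. Injectivity follows from the uniqueness clause in Choquet's theorem for simplices: if $\mathcal{P}(\mu)=\mathcal{P}(\mu')$ then, transporting back via $\mathbb{P}$, the measures $\mathbb{P}_*\mu$ and $\mathbb{P}_*\mu'$ are both representing measures supported on $\partial\mathcal{H}_\infty(\mathbb{R}^d)$ for the same point of the simplex, hence equal, hence $\mu=\mu'$. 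So $\mathcal{P}$ is the composition of $\mathbb{P}_*\colon M_1(K(\delta))\to M_1(\partial\mathcal{H}_\infty(\mathbb{R}^d))$ with the ``barycenter'' map $\nu\mapsto\int P\,d\nu$, both bijective.

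Finally I would check that $\mathcal{P}$ is a homeomorphism for the weak topologies. Continuity: if $\mu_k\to\mu$ weakly in $M_1(K(\delta))$, then for each test cylinder $\Gamma_{\mathbb{R}^d}(\tau)$ the function $m\mapsto\mathbb{P}_m(\Gamma_{\mathbb{R}^d}(\tau))$ is continuous and bounded on the compact $K(\delta)$, so $\mathcal{P}(\mu_k)(\Gamma_{\mathbb{R}^d}(\tau))\to\mathcal{P}(\mu)(\Gamma_{\mathbb{R}^d}(\tau))$; since the topology on $M_1(\Gamma(\mathbb{R}^d))$ is the initial topology with respect to these cylinder evaluations, $\mathcal{P}(\mu_k)\to\mathcal{P}(\mu)$. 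For the inverse, since $K(\delta)$ (resp.\ $K(\delta)^+$) is compact metric, $M_1(K(\delta))$ is compact metric in the weak topology, and $M_1(\Gamma(\mathbb{R}^d))$ is compact Hausdorff by Tychonov as noted in the text; a continuous bijection from a compact space to a Hausdorff space is automatically a homeomorphism, so $\mathcal{P}^{-1}$ is continuous. The same argument verbatim yields the statement for $\mathcal{H}_\infty(\Delta)$ and $M_1(K(\delta)^+)$. The only genuine subtlety — and the step I expect to need the most care — is the compatibility between the two uniqueness mechanisms at play: that the unique Choquet representing measure really does live on $\partial\mathcal{H}_\infty(\mathbb{R}^d)$ (which is why one must invoke that the set is a \emph{simplex}, not merely a compact convex set), and that the map $m\mapsto\mathbb{P}_m(\Gamma_{\mathbb{R}^d}(\tau))$ is continuous rather than merely measurable, so that the barycenter integral is itself continuous in $\mu$; both of these are guaranteed by Theorem \ref{mainresult} and the discussion in Section \ref{Sec_generalities}, so no further work is needed beyond assembling them.
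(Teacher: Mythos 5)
Your proposal is correct and follows essentially the argument the paper intends: the corollary is stated there as a direct consequence of Theorem \ref{mainresult} combined with the Choquet simplex structure of $\mathcal{H}_{\infty}(\Omega)$ recalled in Section \ref{Sec_generalities}, which is exactly what you assemble (pushforward of the unique representing measure through the boundary homeomorphism, continuity via cylinder evaluations, and the compact-to-Hausdorff argument for the inverse). No gaps to report.
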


We prove now that a random path in $\Gamma_{\infty}(\Delta)$ following the harmonic measure $\mathbb{P}^{+}_{m}$ admits a law of large numbers with drift $m$. In the case of a random path in $\Gamma_{\infty}(\mathbb{R}^{d})$ following the harmonic measure $\mathbb{P}_{m}$, the result is clear from the definition of $\mathbb{P}_{m}$ and the classical law of large numbers for random walks. The case of $\mathbb{P}^{+}_{m}$ is more complicated, since the random path is constrained to remain in a domain. However, the result is still true:
\begin{proposition}
Let $\gamma_{m}$ be a random path in $\Gamma_{\infty}(\Delta)$ following the harmonic measure $\mathbb{P}^{+}_{m}$. Denote by $\tau_{m}(n)$ the position of the path after $n$ steps. Then, almost surely,
$$\frac{1}{n}\tau_{m}(n)\longrightarrow m,$$
as $n$ goes to infinity.
\end{proposition}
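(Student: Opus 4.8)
The plan is to reduce the statement to a standard law of large numbers for a Markov chain by exploiting the explicit form of the measure $\mathbb{P}^{+}_{m}$ given in Theorem \ref{mainresult}. Recall that under $\mathbb{P}^{+}_{m}$ the trajectory $\tau_m$ is the Markov chain $Z_{\omega}$ on $P^+_\delta$ built in Section \ref{Sec_generalities} from the harmonic function $p(\lambda,n) = S_{\lambda,N\delta}(\vec t_m)/S_\delta(\vec t_m)^N$; its kernel is
\[
Q(\lambda,\mu) = \#\Gamma_{\Delta}(\lambda,\mu)\,\frac{S_{\mu,\delta}(\vec t_m)\, S_\lambda(\vec t_m)}{S_\delta(\vec t_m)\, S_\mu(\vec t_m)}\,,
\]
obtained by the Doob $h$-transform of the kernel $K_{\delta,\mu-\lambda}\,\vec t_m^{\,\delta-w_m(\mu-\lambda)}/S_\delta(\vec t_m)$ of the free (unconditioned) random walk $X$ of Proposition \ref{injectivityM}. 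The first step is therefore to make this comparison precise: $\tau_m$ is the free walk $X$ (whose i.i.d.\ increments have mean exactly $\vec M(\vec t_m,w_m)=m$ by the computation in Proposition \ref{bijectionPolytop}) conditioned, in Doob's sense, to stay in $\Delta$ forever.

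Next I would invoke the local limit theorem for large deviations already used in the proof of Proposition \ref{injectivityM} (Theorem 4.2.1 of \cite{LLP1}) to control the conditioned walk. Since the free walk $X$ has drift $m$ lying in the \emph{interior} of $\Delta$ whenever $m$ is in the interior of $K(\delta)^+$ (this is where $w_m=\Id$ and $\mathbf 1(\vec t_m)=\emptyset$ by Lemma \ref{positionDrift}), conditioning to stay in $\Delta$ is asymptotically a negligible constraint: the probability that $X$ stays in $\Delta$ up to time $n$ decays only polynomially, while the free walk already satisfies a law of large numbers with drift $m$ by the classical result. Concretely, for any $\varepsilon>0$ the event $\{\|X_n/n - m\|>\varepsilon\}$ has exponentially small free probability by Cramér's theorem, whereas $\mathbb{P}(X_j\in\Delta, j\le n)$ is only polynomially small; hence the conditioned probability of $\{\|\tau_m(n)/n-m\|>\varepsilon\}$ is still summable in $n$, and a Borel--Cantelli argument gives the almost sure convergence. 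For $m$ on the boundary of $K(\delta)^+$, where some $t_i=1$, the drift sits on a wall $w^{-1}(\partial\Delta)$; here one works inside the face, i.e.\ replaces $\mathfrak g$ by the Levi subalgebra attached to $\mathbf 0^c(\vec t_m)$, on which the drift is again interior, and runs the same argument.

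The main obstacle is the boundary-conditioning control: turning ``the conditioning is polynomially costly, the deviation exponentially rare'' into a clean almost-sure statement requires a uniform lower bound $\mathbb{P}(X_j\in\Delta,\ 0\le j\le n, X_n=\gamma_n)\ge c/n^{K}$ along a well-chosen sequence $\gamma_n$ with $\gamma_n/n\to m$, which is exactly the content of the local limit theorem with large deviations together with a reflection/ballot-type estimate for the probability of staying in a cone with interior drift. Once such a polynomial lower bound is in hand, the estimate
\[
\mathbb{P}^{+}_m\big(\|\tau_m(n)/n-m\|>\varepsilon\big)
= \frac{\mathbb{E}\big[\mathbf 1_{\|X_n/n-m\|>\varepsilon}\ \mathbf 1_{X_j\in\Delta,\, j\le n}\ p(X_n,n)\big]}{p(0,0)}
\]
combined with the uniform bounds $1\le S_{\lambda,n\delta}(\vec t_m)/\vec t_m^{\,n\delta-\lambda}\le \dim V(\lambda)$ from Lemma \ref{lemma_encadre} (which make the Radon--Nikodym weight $p(X_n,n)$ at most polynomial in $n$) forces the left-hand side to be $o(n^{-2})$, say, so that $\sum_n \mathbb{P}^{+}_m(\|\tau_m(n)/n-m\|>\varepsilon)<\infty$ and Borel--Cantelli concludes. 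The remaining steps — passing from convergence along integers to all real times via $\|\tau_m(t)-\tau_m(\lfloor t\rfloor)\|\le \mathrm{const}$, and reducing the boundary case to the interior case on a Levi factor — are routine.
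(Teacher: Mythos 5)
Your argument is correct in substance, but it follows a genuinely different route from the paper. The paper's proof is a one-step reduction to the unconditioned walk via the Pitman transform: by \cite[Theorem 4.12]{LLP3}, under $\mathbb{P}^{+}_{m}$ the path is equal in law to $\mathcal{P}_{\alpha_{i_{1}}}\cdots\mathcal{P}_{\alpha_{i_{r}}}(\tilde{\tau}_{m})$ for a reduced word of the longest element $w_{0}$, where $\tilde{\tau}_{m}$ follows $\mathbb{P}_{m}$ and has drift $m$ by Proposition \ref{bijectionPolytop}; since $\langle m,\alpha\rangle\geq 0$ for every simple root, each correction term $\inf_{s\in[0,t]}\tfrac{2\langle\tilde{\tau}_{m}(s),\alpha\rangle}{\langle\alpha,\alpha\rangle}$ is $o(t)$ almost surely, so the iterated transforms do not change the drift. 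You instead compare $\mathbb{P}^{+}_{m}$ directly with the tilted free walk $\mathbb{P}_{m}$ by a change of measure on the first $n$ steps: on a cylinder $\gamma\in\Gamma_{\Delta}(\lambda,n)$ the density is $S_{\lambda,n\delta}(\vec{t}_{m})/\vec{t}_{m}^{\,n\delta-\lambda}$, which Lemma \ref{lemma_encadre} bounds by $\dim V(\lambda)$, hence polynomially in $n$, while $\mathbb{P}_{m}(\|X_{n}/n-m\|>\varepsilon)$ decays exponentially (bounded i.i.d.\ increments with mean $m$); summability plus Borel--Cantelli then give the almost sure convergence. Your route is self-contained within the paper (no appeal to the inverse Pitman theorem of \cite{LLP3}) and, once stated correctly, applies verbatim for every $m\in K(\delta)^{+}$, boundary points included; the paper's route buys a conceptually cleaner statement (conditioning acts through path transformations that are drift-neutral on $\Delta$) at the price of an external result.

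Two points should be repaired in your write-up, though neither is fatal. First, your displayed identity is not the correct Radon--Nikodym formula: with respect to the drift-$m$ walk $X$ the weight is the ratio $\mathbf{1}_{\{X_{j}\in\Delta,\,j\leq n\}}\,S_{X_{n},n\delta}(\vec{t}_{m})/\vec{t}_{m}^{\,n\delta-X_{n}}$, not $p(X_{n},n)=S_{X_{n},n\delta}(\vec{t}_{m})/S_{\delta}(\vec{t}_{m})^{n}$, which is itself exponentially small; as written the expectation does not reproduce $\mathbb{P}^{+}_{m}$ under either the uniform or the tilted walk. Second, the ``main obstacle'' you single out --- a polynomial lower bound on the probability that the free walk stays in $\Delta$ --- is never needed: the upper bound on the density already closes the argument, so the ballot-type estimate and the Levi-subalgebra reduction for boundary $m$ can be deleted (the loose identification of $\mathbb{P}^{+}_{m}$ with the walk conditioned to remain in $\Delta$ forever is likewise unused, and is not literally correct when $m$ lies on a wall).
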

\begin{proof}
Denote by $\tilde{\tau}_{m}$ the random path in $\Gamma_{\infty}(\mathbb{R}^{d})$ following the harmonic measure $\mathbb{P}_{m}$. By \cite[Theorem 4.12]{LLP3}, we have the equality in law
$$\tau_{m}=\mathcal{P}_{\alpha_{i_{1}}}\dots\mathcal{P}_{\alpha_{i_{r}}}(\tilde{\tau}_{m}),$$
where $w_{0}=s_{\alpha_{i_{1}}}\dots s_{\alpha_{i_{r}}}$ is a minimal length decomposition of the longest element of $W$, and each operator $\mathcal{P}_{\alpha}$ is the Pitman transformation associated with the root $\alpha$. We recall that the definition of the operator $\mathcal{P}_{\alpha}$ on a path $\tau\in\Gamma_{\infty}(\mathbb{R}^{d})$ is given by
$$\mathcal{P}_{\alpha}(\tau)(t)=\tau(t)-(\inf_{s\in[0,t]}\frac{2\langle \tau(s),\alpha\rangle}{\langle \alpha,\alpha\rangle})\alpha.$$
By a large deviation principle,
$$\Vert \frac{1}{t}(\tilde{\tau}_{m})_{\vert [0,t]}- m \Id_{\vert [0,t]}\Vert _{\infty}\underset{t\rightarrow+\infty}{\longrightarrow} 0$$
with probability one. Thus, for $s\in [0,t]$ and $\alpha\in S$,
$$ \left\vert\frac{1}{t}\frac{2\langle \tilde{\tau}_{m}(s),\alpha\rangle}{\langle \alpha,\alpha\rangle}-\frac{2\langle ms,\alpha\rangle}{\langle \alpha,\alpha\rangle}\right\vert\leq \epsilon(t),$$
with $\epsilon(t)$ converging to $0$ when $t$ goes to infinity with probability one. Since $m\in K(\delta)^{+}$, $\langle m,\alpha\rangle\geq 0$, and thus
$\inf_{s\in[0,t]}s\frac{2\langle m,\alpha\rangle}{\langle \alpha,\alpha\rangle}=0$.
Hence,
$$\left\vert\frac{1}{t}\inf_{s\in[0,t]}\frac{2\langle \tilde{\tau}_{m}(s),\alpha\rangle}{\langle \alpha,\alpha\rangle}\alpha\right\vert\leq \epsilon(t)\vert\alpha\vert\underset{t\rightarrow +\infty}\longrightarrow 0,$$
and finally $\frac{1}{t}\mathcal{P}_{\alpha}(\tilde{\tau}_{m})(t)\sim\frac{1}{t}\tilde{\tau}_{m}(t)\longrightarrow m$ as $t$ goes to $+\infty$, with probability one. Iterating this result for $\mathcal{P}_{\alpha_{i_{1}}},\dots,\mathcal{P}_{\alpha_{i_{r}}}$ yields that 
$$\frac{1}{t}\mathcal{P}(\tilde{\tau}_{m})(t)\underset{t\rightarrow +\infty}{\longrightarrow}m$$
with probability one. Since $\mathcal{P}(\tilde{\tau}_{m})$ is equal in law to $\tau_{m}$, the proof is done.
\end{proof}
\subsection{$c$-harmonic function killed on the boundary of $\Delta$}\label{proofCorollary}
We end this section by proving Corollary \ref{corollary}. We recall that $\hat{s}$ is the map from $\partial\mathcal{H}_{\infty}(\Delta)$ to $\mathbb{R}^{+}
\cup\lbrace\infty\rbrace$ defined by 
$$\hat{s}(\mathbb{P}^{+}_{m})=\sum_{\gamma\in P}K_{\delta,\gamma}\vec{t}_{m}^{\gamma}.$$
Note that the range of $\vec{t}_{m}$ is exactly $[0,1]_{\delta}^{d}$ by Proposition \ref{bijectionPolytop}. 
\begin{proof}[Proof of Corollary \ref{corollary}]

Suppose that $P\in\mathcal{H}_{c}(\Delta)$ and set $Z=\dim V(\delta)$. By Corollary \ref{measureDescription}, there exists $\mu\in \mathcal{M}_{1}(K(\delta)^{+})$ such that $P=\int_{K(\delta)^{+}}\mathbb{P}_{m}^{+}d\mu(m)$. Since $P\in\mathcal{H}_{c}(\Delta)$, there exists $p:\Lambda\cap \Delta\longrightarrow \mathbb{R}^{+}$ such that
$$P(\Gamma_{\Delta}(\gamma))=\frac{p(x)}{(cZ)^{n}},$$
for $\gamma\in\Gamma_{\Delta}(x,n)$. Let $\gamma=(\gamma_{n})_{n\geq 1}$ be a sequence of paths such that $\gamma_{n}\in \Gamma_{\Delta}(x,\phi(n))$ with $\phi(n)\longrightarrow +\infty$ when $n$ goes to infinity. Then, on the one hand, $(cZ)^{\phi(n)}P(\Gamma_{\Delta}(\gamma_{n}))$ is constant and equal to $p(x)$. On the other hand, by the expression of $P$ and Theorem \ref{mainresult},
\begin{equation}\label{fixedquantity}
(cZ)^{\phi(n)}P(\Gamma_{\Delta}(\gamma_{n}))=\int_{K(\delta)^{+}}(cZ)^{\phi(n)}\frac
{S_{x,\phi(n)\delta}(\vec{t}_{m})}{S_{\delta}(\vec{t}_{m})^{\phi(n)}}d\mu(m).
\end{equation}
Suppose that $m$ is such that $\hat{s}_{\delta}(\mathbb{P}_{m}^{+})=+\infty$. This means that there exists $1\leq i\leq d$ such that $(\vec{t}_{m})_{i}=0$. Thus, by Proposition \ref{descriptionNonZeroSet}, $\mathbb{P}_{m}^{+}(\Gamma_{\Delta}(\gamma))$ is nonzero only if $\gamma\in \Gamma_{\Delta}(y,n)$ for $y\in n\delta +n \Pi_{F}$, where $F$ is a fixed dominant face which is strictly smaller than $K_{\delta}$. Hence, for $n$ large enough, $\mathbb{P}_{m}^{+}(\Gamma_{\Delta}(\gamma_{n}))=0$, and we can assume that the support of $\mu$ is included in $\lbrace\mathbb{P}_{m}^{+}\vert\vec{t}_{m}\in ]0,1]^{d}\rbrace$. 
For $\vec{t}_{m}\in ]0,1]^{d}$, $s_{x}(\vec{t}_{m})$ is well defined for all $x\in P^{+}$ and we have
$$\frac
{S_{x,\phi(n)\delta}(\vec{t}_{m})}{S_{\delta}(\vec{t}_{m})^{\phi(n)}}=\frac{\vec{t}_{m}^{\phi(n)\delta}s_{x}(\vec{t}_{m})}{\vec{t}_{m}^{\phi(n)\delta}(s_{\delta}(\vec{t}_{m}))^{\phi(n)}}=\frac{s_{x}(\vec{t}_{m})}{\hat{s}_{\delta}(\mathbb{P}_{m}^{+})^{\phi(n)}}.$$
Thus, \eqref{fixedquantity} becomes
$$(cZ)^{\phi(n)}P(\Gamma_{\Delta}(\gamma_{n}))=\int_{K(\delta)^{+}}(cZ)^{\phi(n)}\frac
{s_{x}(\vec{t}_{m})}{\hat{s}_{\delta}(\mathbb{P}_{m}^{+})^{\phi(n)}}d\mu(m).$$
In order that the right hand-side of the latter expression does not go to infinity, we must have $\mu(\hat{s}_{\delta}^{-1}[0,cZ[)=0$. Then, we have 
$$p(x)=\lim_{n\rightarrow +\infty}\int_{K(\delta)^{+}}\left(\frac{tZ}{\hat{s}_{\delta}(\mathbb{P}_{m}^{+})}\right)^{\phi(n)}
s_{x}(\vec{t}_{m})d\mu(m)=\int_{\hat{s}_{\delta}^{-1}(\lbrace cZ\rbrace)}s_{x}(\vec{t}_{m})d\mu(m),$$
and the support of $\mu$ is $\hat{s}_{\delta}^{-1}(\lbrace cZ\rbrace)$. Finally, $\partial \mathcal{H}_{c}(\Delta)\subset \hat{s}_{\delta}^{-1}(\lbrace cZ\rbrace)$.\\
Its is readily seen that $\hat{s}_{\delta}^{-1}(\lbrace cZ\rbrace)\subset\mathcal{H}_{c}(\Delta)\cap \partial \mathcal{H}_{\infty}(\Delta)$ by the expression of $\mathbb{P}_{m}^{+}$ from Theorem \ref{mainresult}, and by the characterization of $c$-harmonic functions from Section \ref{doobconditioning}; thus $\partial \mathcal{H}_{c}(\Delta)=\hat{s}_{\delta}^{-1}(\lbrace cZ\rbrace)$, which proves the first part of the corollary.\\
For  the second part of the corollary, a quick computation yields that $\log(s_{\delta})$ is strictly convex on $\mathbb{R}^{d}$: the hessian matrix of $\log(s_{\delta})$ at $\vec{t}$ is actually the covariance matrix of a non-degenerate random variable. Thus, $\log(s_{\delta})$ admits a unique minimum on $\mathbb{R}^{d}$, which is located at the unique vector $\vec{t}_{0}$ such that $\nabla\log(s_{\delta})(\vec{t}_{0})={\vec{0}}$. Since 
$$\nabla\log(s_{\delta})(\vec{t})=\frac{1}{s_{\delta}(t)}\sum_{\gamma\in P}K_{\delta,\gamma}\vec{t}^{\gamma}\gamma=\vec{M}(\vec{t})$$
and $\vec{M}(\vec{1})=\vec{0}$, the minimum of $\log(s_{\delta})$ is at $\vec{1}$. The same holds for $s_{\delta}$, and thus $\min_{\partial_{\infty}(\Delta,0)}\hat{s}_{\delta}=\hat{s}_{\delta}(\mathbb{P}_{\vec{1}})=Z$. The second part of the corollary is a straightforward deduction of this fact.

\end{proof}
Note that the proof of the latter corollary gives an explicit expression of the harmonic measure associated with the central measure $\mathbb{P}_{m}$.
\bigskip

\noindent Laboratoire de Math\'{e}matiques et Physique Th\'{e}orique (UMR CNRS
7350).

\noindent Universit\'{e} Fran\c{c}ois-Rabelais, Tours Parc de Grandmont, 37200 Tours, France.

\noindent{cedric.lecouvey@lmpt.univ-tours.fr}
{pierre.tarrago@lmpt.univ-tours.fr}

\bigskip

\end{document}